\documentclass[a4paper]{amsart}

\usepackage[all]{xy}
\usepackage{amsmath}
\usepackage{amssymb}
\usepackage{amsthm}
\usepackage{latexsym}
\usepackage{enumerate}
\usepackage{prettyref}
\usepackage{hyperref}

\newcommand{\Ao}{\ensuremath{\mathbb{A}^1}}

\newcommand{\topos}[1]{\ensuremath{\mathcal{S}hv(#1)}}
\newcommand{\simplicial}[1]{\ensuremath{\Delta^{op}#1}}
\newcommand{\sms}{\ensuremath{\operatorname{Sm}_S}}
\newcommand{\smk}{\ensuremath{\operatorname{Sm}_k}}

\newcommand{\Hom}{\ensuremath{\operatorname{Hom}}}
\newcommand{\set}{\ensuremath{\mathcal{S}et}}

\newcommand{\id}{\ensuremath{\operatorname{id}}}

\newcommand{\diagram}[1]{\ensuremath{\mathcal{#1}}}
\newcommand{\category}[1]{\ensuremath{\mathcal{#1}}}
\newcommand{\colim}{\ensuremath{\operatornamewithlimits{colim}}}
\newcommand{\hocolim}{\ensuremath{\operatornamewithlimits{hocolim}}}
\newcommand{\inthom}{\ensuremath{\mathbf{Hom}}}

\newcommand{\classify}[1]{\ensuremath{B\haut({#1})}}

\newcommand{\haut}{\ensuremath{\operatorname{hAut}_\bullet}}

\newtheorem{mainthm}{Theorem}
\newtheorem{theorem}{Theorem}[section]
\newrefformat{thm}{\hyperref[{#1}]{Theorem~\ref*{#1}}}
\newtheorem{definition}[theorem]{Definition}
\newrefformat{def}{\hyperref[{#1}]{Definition~\ref*{#1}}}
\newtheorem{lemma}[theorem]{Lemma}
\newrefformat{lem}{\hyperref[{#1}]{Lemma~\ref*{#1}}}
\newtheorem{proposition}[theorem]{Proposition}
\newrefformat{prop}{\hyperref[{#1}]{Proposition~\ref*{#1}}}
\newtheorem{corollary}[theorem]{Corollary}
\newrefformat{cor}{\hyperref[{#1}]{Corollary~\ref*{#1}}}
\newtheorem{remark}[theorem]{Remark}
\newrefformat{rem}{\hyperref[{#1}]{Remark~\ref*{#1}}}
{
\newtheorem{examplecore}[theorem]{Example}}
\newrefformat{ex}{\hyperref[{#1}]{Example~\ref*{#1}}}

\begin{document}

\title{Fibre Sequences and Localization of Simplicial Sheaves} 
\date{Revised version, April 2012}

\author{Matthias Wendt}

\address{Matthias Wendt, Mathematisches Institut,
Uni\-ver\-si\-t\"at Freiburg, Eckerstra\ss{}e 1, 79104,
  Freiburg im Breisgau, Germany}
\email{matthias.wendt@math.uni-freiburg.de}

\subjclass[2010]{18F20, 55P60, 14F42}
\keywords{Bousfield localization, simplicial sheaves, $\Ao$-homotopy theory}

\begin{abstract}
In this paper, we discuss the theory of quasi-fibrations in proper Bousfield 
localizations of model categories of simplicial sheaves.
We provide a construction of fibrewise localization and use this
construction to generalize a criterion for locality of fibre sequences
due to Berrick and Dror Farjoun. The results allow a
better understanding of unstable $\Ao$-homotopy theory.
\end{abstract}

\maketitle
\setcounter{tocdepth}{1}
\tableofcontents

\section{Introduction}

In this paper, we discuss aspects of Bousfield localization for simplicial 
sheaves. One of the main phenomena of interest is the behaviour of fibrations
resp. fibre sequences under a Bousfield localization. In general, fibrations
and fibre sequences are not preserved by a Bousfield localization, and it
is an interesting question to find suitable criteria under which they are
preserved. An extensive discussion of issues related to this question
can be found in \cite{farjoun:1996:cellular}. A general criterion for
locality of  fibre sequences in nullifications has been obtained by
Berrick and  Dror Farjoun in \cite{berrick:farjoun:2003:null}. The
main goal of this paper  is to provide a generalization of this result
to the setting of simplicial sheaves. It should be pointed out that
the methods heavily use homotopy  pullbacks and therefore only apply
to the case where the Bousfield localization is right proper.

The main tool used in the present work is an analogue of the theory of 
the quasi-fibrations of Dold and Thom \cite{dold:thom}. On the one hand, 
quasi-fibrations behave like fibrations in that point-set and homotopy fibres
agree - in particular, quasi-fibrations give rise to fibre sequences and 
hence long exact homotopy sequences. On the other hand, quasi-fibrations are
much more flexible than fibrations. In the setting of categories of simplicial
sheaves, the sharp maps of Rezk \cite{rezk:1998:sharp} provide a replacement
for quasi-fibrations for model categories of simplicial sheaves. This theory
has been used in \cite{classify} to produce classifying spaces for fibre
sequences of simplicial sheaves. In the present paper, we consider the
notion of universally $f$-local  maps, cf. \prettyref{def:univfloc},
 in (proper) Bousfield localizations of model categories of simplicial 
sheaves. This notion as well as the basic assertions in
\prettyref{sec:fibseq} are due to Jardine and were suggested to me as a
correction to a mistake in an earlier version of this paper. 
Jardine's definition of universally $f$-local maps is equivalent to
the definition of sharp maps given by Rezk in \cite{rezk:1998:sharp},
and the assertions in  \prettyref{sec:fibseq}  show that universally
$f$-local maps provide a good theory of ``$f$-local quasi-fibrations''.

There are two simple reasons why the calculus of universally $f$-local maps
works in Bousfield localizations of simplicial sheaves: on the one hand,
one can use the homotopy colimit decomposition and homotopy distributivity 
of simplicial sheaves. On the other hand, the properness of the localized
model structure has the important consequence that a simplicial 
quasi-fibration over an $f$-local base is an $f$-local quasi-fibration. 

With the $f$-local quasi-fibrations, it is possible to give a construction 
of fibrewise $f$-localization. The construction we give in 
\prettyref{sec:fibwise} is almost a direct translation of the fibrewise 
localization in the category of simplicial sets - again the main technical
tools are the homotopy colimit decomposition and the properness of the local
model structure. 

Once we have a working construction of fibrewise localization, we can 
almost directly translate the criterion of Berrick and Dror Farjoun
to the simplicial sheaf setting. The result is then the following, 
cf. \prettyref{thm:bfthm}: 

\begin{mainthm}
Let $T$ be a site and let $f:X\rightarrow Y$ be a morphism of
simplicial sheaves in $\simplicial{\topos{T}}$. Assume that the
$f$-local model structure is proper. Let $p:E\rightarrow B$ be a
morphism of simplicial sheaves. 

We denote by $\overline{p}:\overline{E}\rightarrow B$ the fibrewise
$f$-localization of $p$, and by $j:B\rightarrow L_fB$ an $f$-local fibrant 
replacement of $B$.
The following are equivalent, where (iv) only makes sense if
$p:E\rightarrow B$ is locally trivial:
\begin{enumerate}[(i)]
\item The map $p:E\rightarrow B$ is universally $f$-local. 
\item The fibrewise localization $\overline{p}:\overline{E}\rightarrow
  B$ is universally $f$-local.
\item For each simplex $\sigma:\Delta^n\times U\rightarrow L_fB$, the
  following canonical diagram is a simplicial homotopy pullback:
\begin{center}
  \begin{minipage}[c]{10cm}
    \xymatrix{
      (\Delta^n\times U)\times_{L_fB}\overline{E} \ar[r] \ar[d]
      & \overline{p}^{-1}(\sigma) \ar[d] \\
      (\Delta^n\times U)\times_{L_fB}B \ar[r] &
      \Delta^n\times U.
    }
  \end{minipage}
\end{center}
Here $\overline{p}^{-1}(\sigma)$ denotes the fibre of the fibrewise
localization over $\sigma$, cf. \prettyref{def:fibloc1}.
\item For each simplex $\sigma:\Delta^n\times U\rightarrow L_fB$, the
  composition
$$(\Delta^n\times U)\times_{L_fB}B=j^{-1}(\sigma)\rightarrow
B\rightarrow \classify{F} \rightarrow \classify{L_fF}
$$
factors (in the simplicial homotopy category) through the projection
$(\Delta^n\times U)\times_{L_fB}B\rightarrow \Delta^n\times U$.
\end{enumerate}
\end{mainthm}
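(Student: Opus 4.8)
The plan is to prove the equivalences in the order (i)$\Leftrightarrow$(ii)$\Leftrightarrow$(iii), all of which make sense unconditionally, and then (iii)$\Leftrightarrow$(iv) under the assumption that $p$ is locally trivial. The common thread is the description of universal $f$-locality developed in \prettyref{sec:fibseq}: a map over a base $B$ is universally $f$-local exactly when, up to homotopy over $B$, it arises by base change along $j\colon B\to L_fB$ from a map over the $f$-local object $L_fB$. Over $L_fB$ there is nothing left to check, since by properness every simplicial quasi-fibration over an $f$-local base is already an $f$-local quasi-fibration. Thus both (i) and (ii) assert that a certain map over $B$ ``descends along $j$'', and the content of the theorem is to unwind what this descent means.

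For (i)$\Leftrightarrow$(ii) I would use the compatibility of the fibrewise localization of \prettyref{sec:fibwise} with base change. The comparison map $E\to\overline{E}$ is a map over $B$ restricting to an $f$-equivalence on each fibre, and fibrewise localization commutes with pullback along $j$: it only modifies the fibres, and, via the homotopy colimit decomposition, these are insensitive to replacing $B$ by $L_fB$ in the relevant sense. Hence $p$ is a base change along $j$ of a map over $L_fB$ if and only if $\overline{p}$ is.

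For (ii)$\Leftrightarrow$(iii) the strategy is to test ``$\overline{p}$ descends along $j$'' one simplex of $L_fB$ at a time. Writing $L_fB$ as the homotopy colimit of its simplices $\sigma\colon\Delta^n\times U\to L_fB$ and applying homotopy distributivity gives $B\simeq\hocolim_\sigma j^{-1}(\sigma)$ and $\overline{E}\simeq\hocolim_\sigma\bigl((\Delta^n\times U)\times_{L_fB}\overline{E}\bigr)$; therefore $\overline{p}$ is a base change along $j$ precisely when, for every $\sigma$, the restriction of $\overline{p}$ over $j^{-1}(\sigma)$ is the base change along the projection $j^{-1}(\sigma)\to\Delta^n\times U$ of the canonical map $\overline{p}^{-1}(\sigma)\to\Delta^n\times U$ of \prettyref{def:fibloc1}. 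Properness of the $f$-local model structure is what lets one pass between the point-set pullbacks occurring here and honest homotopy pullbacks, so this condition is exactly that the square in (iii) be a simplicial homotopy pullback. I expect this step to be the main obstacle: the simplexwise reduction must control the homotopy colimit decomposition of the (a priori badly behaved) total space $\overline{E}$ together with the interaction of non-fibrant pullbacks with $f$-equivalences, and it is precisely here that homotopy distributivity and the properness hypothesis are essential.

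Finally, for (iii)$\Leftrightarrow$(iv), assume $p$ is locally trivial with fibre $F$ and classified by a map $B\to\classify{F}$ in the sense of \cite{classify}. Its fibrewise localization $\overline{p}$ is then classified by the composite $B\to\classify{F}\to\classify{L_fF}$, its restriction over $j^{-1}(\sigma)$ by the composite appearing in (iv), and $\overline{p}^{-1}(\sigma)\to\Delta^n\times U$ is the locally trivial $L_fF$-bundle of \prettyref{def:fibloc1}, classified by some map $\Delta^n\times U\to\classify{L_fF}$. By the universal property of $\classify{L_fF}$, the restriction of $\overline{p}$ over $j^{-1}(\sigma)$ is the base change of that bundle along $j^{-1}(\sigma)\to\Delta^n\times U$ if and only if its classifying composite factors, in the simplicial homotopy category, through this projection, which is statement (iv). Combined with the reformulation of (iii) from the previous paragraph, namely that this restriction is such a base change, the equivalence follows.
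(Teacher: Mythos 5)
Your outline follows the paper's architecture — (i)$\Leftrightarrow$(ii) via the fibre-and-total-space comparison, (ii)$\Leftrightarrow$(iii) via the simplex decomposition of $L_fB$, (iii)$\Leftrightarrow$(iv) via the classifying space and \prettyref{lem:fibcomp} — and the last step is essentially correct as you state it. But the organizing principle you lean on throughout, namely that ``$p$ is universally $f$-local exactly when, up to homotopy over $B$, it is a base change along $j\colon B\to L_fB$ of a map over $L_fB$'', is never proved and is false in this generality, under either reading of ``up to homotopy''. If the equivalence over $B$ is taken $f$-locally, the backward direction fails: an $f$-local equivalence of total spaces over $B$ does not induce $f$-local equivalences on the fibres over simplices (for the $S^1$-nullification of simplicial sets, the universal cover $\mathbb{R}\to S^1$ is $f$-locally equivalent over $S^1$ to $\mathrm{id}_{S^1}$, which is pulled back from $L_fS^1$, yet $\mathbb{R}\to S^1$ is not universally $f$-local since its fibre $\mathbb{Z}$ is local and noncontractible while its $f$-local homotopy fibre is contractible). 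If the equivalence is taken simplicially, the forward direction fails: a product projection $B\times F\to B$ with $F$ not $f$-local is universally $f$-local but not simplicially equivalent over $B$ to any pullback from $L_fB$. The correct statement is conditional: a map \emph{with $f$-local fibres} is universally $f$-local iff its comparison square with the localization is a \emph{simplicial} homotopy pullback — this is precisely \prettyref{lem:help}, it requires a genuine argument (the cube comparing the fibre squares with their localizations), and it is the only reason condition (iii) can be stated as a simplicial rather than an $f$-local homotopy pullback. Your sketch never addresses this simplicial-versus-$f$-local upgrade; ``properness lets one pass between point-set pullbacks and homotopy pullbacks'' does not do it.

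Concretely, the step you yourself flag as the main obstacle, (ii)$\Leftrightarrow$(iii), is where the gap sits, and the descent heuristic does not fill it. The paper's route is: (a) since $L_fB$ is $f$-local, $\overline{p}$ is universally $f$-local iff each restriction $(j\circ\overline{p})^{-1}(\sigma)\to j^{-1}(\sigma)$ over a simplex $\sigma$ of $L_fB$ is universally $f$-local (\prettyref{lem:old51}, which uses \prettyref{cor:cor6} and \prettyref{cor:pbstab}, not a hocolim assembly of ``simplexwise base changes'' — gluing such identifications coherently is exactly what your sketch would still owe); and (b) this restriction has $f$-local fibres because $\overline{p}$ does (\prettyref{lem:fibwise1}), and its localization is identified, via the defining pullback square of \prettyref{def:fibloc1}, with $\overline{p}^{-1}(\sigma)\to\Delta^n\times U$ up to $L_f(\Delta^n\times U)$, so that \prettyref{lem:help} converts its universal $f$-locality into the simplicial homotopy pullback of (iii) by pasting with the square $\overline{p}^{-1}(\sigma)\to L_f(p^{-1}(\sigma))$ over $\Delta^n\times U\to L_f(\Delta^n\times U)$. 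Similarly, for (i)$\Leftrightarrow$(ii) you have the right ingredient (the fibrewise localization is an $f$-equivalence on the total space and on every $p^{-1}(\sigma)$), but the clean conclusion is the direct one of \prettyref{lem:oldlem45} — the two defining pullback squares are connected by $f$-local equivalences, hence are $f$-local homotopy pullbacks simultaneously — not the descent reformulation. So: right skeleton and correct final step, but the central equivalence (ii)$\Leftrightarrow$(iii) is unproved and the tool needed for it (the $f$-local-fibres criterion of \prettyref{lem:help}, plus the simplexwise reduction of \prettyref{lem:old51}) is missing from your argument.
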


It should be noted that the above result specializes exactly to 
\cite[Theorem 4.1]{berrick:farjoun:2003:null}. The additional complication 
in the formulation of the above theorem is due to the fact that the homotopy 
colimit decomposition of a simplicial sheaf allows to decompose a simplicial
sheaf $X$ as the homotopy colimit of its simplices 
$\Delta^n\times U\rightarrow X$, but the spaces $\Delta^n\times U$ are not
necessarily contractible. However, the interpretation of the above theorem
is still the same: a map of simplicial sheaves $p:E\rightarrow B$ is 
universally $f$-local if the restriction of its fibrewise localization 
$\overline{p}:\overline{E}\rightarrow B$ to non-local parts of $B$ is 
``trivial''. Here non-local parts of $B$ are fibres of $j:B\rightarrow L_fB$ 
over simplices $\Delta^n\times B\rightarrow L_fB$, and ``trivial'' means
that the corresponding map is a pullback of a map over $\Delta^n\times U$. 

As an interesting application, we arrive at conditions when morphisms
induce fibre sequences in $\Ao$-homotopy theory. In the case where the
morphisms are locally trivial in the Nisnevich topology, the homotopy
theory criteria reduce to a simple condition on the sheaf of homotopy
self-equivalences of the fibre.

\begin{mainthm}
Let $F$ be a simplicial sheaf on $\smk$.
If $\pi_0\haut L_{\Ao}F$ is a strongly $\Ao$-invariant sheaf of groups, then
any morphism $p:E\rightarrow B$ which is locally trivial in the
Nisnevich topology with fibre $F$ is universally $f$-local. In
particular, there are  $\Ao$-local fibre sequences $F\rightarrow
E\rightarrow B$ for any choice of base point of $B$.
\end{mainthm}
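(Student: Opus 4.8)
The plan is to deduce the statement from the Main Theorem (\prettyref{thm:bfthm}) applied with $f$ the $\Ao$-localization, for which the $\Ao$-local model structure on simplicial Nisnevich sheaves on $\smk$ is proper (Morel--Voevodsky). As $p\colon E\to B$ is locally trivial in the Nisnevich topology with fibre $F$, it is classified by a map $B\to\classify{F}$ (cf.\ \cite{classify}) and criterion~(iv) of \prettyref{thm:bfthm} applies; I choose the $\Ao$-fibrant replacement $j\colon B\to L_{\Ao}B$ to be a global fibration, so that $j^{-1}(\sigma)=(\Delta^n\times U)\times_{L_{\Ao}B}B$ is the honest fibre for every simplex $\sigma\colon\Delta^n\times U\to L_{\Ao}B$. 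One is then reduced to showing that each composite $j^{-1}(\sigma)\to B\to\classify{F}\to\classify{L_{\Ao}F}$ factors, in the simplicial homotopy category, through the projection $j^{-1}(\sigma)\to\Delta^n\times U$.

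The first step is to observe that $j^{-1}(\sigma)\to\Delta^n\times U$ is an $\Ao$-weak equivalence. Being a global fibration (over a proper model structure), $j$ is a sharp map in the sense of Rezk \cite{rezk:1998:sharp}; since its target $L_{\Ao}B$ is $\Ao$-local, the properness input recalled in the Introduction shows $j$ to be universally $\Ao$-local, so its defining square exhibits $j^{-1}(\sigma)$ as an $\Ao$-homotopy pullback, and the base change of the $\Ao$-equivalence $j$ along $\sigma$ is again an $\Ao$-equivalence.

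The crucial step is to prove that $\classify{L_{\Ao}F}$ is $\Ao$-local; this is the only place the hypothesis enters. Since $L_{\Ao}F$ is $\Ao$-fibrant, the pointed internal function object $\inthom_\bullet(L_{\Ao}F,L_{\Ao}F)$ is $\Ao$-local, because internal function objects into, and homotopy fibres of maps of, $\Ao$-local objects are $\Ao$-local; hence so is the connected component of the identity, $\haut(L_{\Ao}F)^{\circ}$. Consequently $B\bigl(\haut(L_{\Ao}F)^{\circ}\bigr)$ is a simply connected simplicial sheaf whose homotopy sheaves are strictly $\Ao$-invariant (by Morel's results on the homotopy sheaves of $\Ao$-local objects, together with the fact that a strongly $\Ao$-invariant sheaf of abelian groups is strictly $\Ao$-invariant), hence $\Ao$-local. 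On the other hand, delooping the extension $\haut(L_{\Ao}F)^{\circ}\to\haut(L_{\Ao}F)\to\pi_0\haut(L_{\Ao}F)$ of grouplike simplicial monoids yields a fibre sequence
\[
  B\bigl(\haut(L_{\Ao}F)^{\circ}\bigr)\longrightarrow\classify{L_{\Ao}F}\longrightarrow B\bigl(\pi_0\haut L_{\Ao}F\bigr),
\]
whose base is $\Ao$-local by Morel's characterization of $\Ao$-local classifying spaces of strongly $\Ao$-invariant sheaves of groups. Since the total space of a fibration with $\Ao$-local base and $\Ao$-local fibre is $\Ao$-local (cf.\ \cite{farjoun:1996:cellular}), $\classify{L_{\Ao}F}$ is $\Ao$-local.

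Granting the two steps, maps out of $j^{-1}(\sigma)$ into the $\Ao$-local object $\classify{L_{\Ao}F}$ depend only on the $\Ao$-homotopy type of $j^{-1}(\sigma)$, so by the first step the projection $j^{-1}(\sigma)\to\Delta^n\times U$ induces a bijection on simplicial homotopy classes of maps into $\classify{L_{\Ao}F}$; in particular the composite of criterion~(iv) factors through $\Delta^n\times U$. Thus (iv) holds, \prettyref{thm:bfthm} gives that $p$ is universally $f$-local, and the basic properties of universally $f$-local maps from \prettyref{sec:fibseq} identify the point-set fibre of $p$ over any point of $B$ --- which is $F$ by local triviality --- with its $\Ao$-homotopy fibre, yielding the asserted $\Ao$-local fibre sequences $F\to E\to B$. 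I expect the main obstacle to be the crucial step: establishing $\Ao$-locality of $\classify{L_{\Ao}F}$ requires combining the formal stability properties of $\Ao$-local objects (internal function objects, homotopy limits, extensions along fibrations) with Morel's structure theory of strongly and strictly $\Ao$-invariant sheaves, and handling the action of $\pi_0\haut L_{\Ao}F$ on the higher homotopy --- dealt with above by viewing $\classify{L_{\Ao}F}$ through the displayed fibre sequence rather than directly through its Postnikov tower.
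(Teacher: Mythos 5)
Your proposal is correct in outline, but it follows a genuinely different route from the paper's. The paper does not verify criterion (iv) of \prettyref{thm:bfthm} at all: it proves in \prettyref{thm:blocal} that $\classify{X}$ is $\Ao$-local for $X$ cofibrant, fibrant and $\Ao$-local with $\pi_0\haut(X)$ strongly $\Ao$-invariant --- by showing $\haut(X)$ is $\Ao$-local via the internal-hom/component argument (which you reproduce) and then quoting Morel's criterion for $\Ao$-locality of the classifying space together with the identification $\haut X\simeq\Omega B\haut X$ for the grouplike monoid $\haut X$ --- and then deduces the theorem in \prettyref{cor:loc} much more directly: once $\classify{L_{\Ao}F}$ is local, \prettyref{cor:cor6} makes the universal fibration over it universally $\Ao$-local, and pullback stability (\prettyref{cor:pbstab}, together with the identification of the fibrewise localization via \prettyref{lem:fibcomp} and \prettyref{lem:oldlem45}) transfers this to any Nisnevich locally trivial $p:E\rightarrow B$ with fibre $F$. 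Your detour through criterion (iv) of \prettyref{thm:bfthm} (with the observation that $j^{-1}(\sigma)\rightarrow\Delta^n\times U$ is an $\Ao$-equivalence, which is exactly the argument inside \prettyref{thm:localization}) is valid, but it invokes the heaviest theorem of the paper where the lighter pullback-stability argument already suffices; what it buys is a nice illustration of the Berrick--Dror Farjoun-style criterion in action.

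The place where you should be more careful is your ``crucial step'', which in effect re-proves \prettyref{thm:blocal} instead of citing Morel's locality criterion for $B\haut X$ as the paper does. Two points there are not free in the sheaf setting: (a) the asserted fibre sequence $B(\haut(L_{\Ao}F)^{\circ})\rightarrow\classify{L_{\Ao}F}\rightarrow B(\pi_0\haut L_{\Ao}F)$ requires rectifying a grouplike simplicial monoid and sheafifying $\pi_0$, and the principle ``fibration with $\Ao$-local base and $\Ao$-local fibre has $\Ao$-local total space'' must be applied with \emph{all} local fibres over all $U$-points (not just the fibre over the base point), and justified for simplicial sheaves rather than by citing Farjoun's statement for spaces; (b) the converse implication ``simply connected with strictly $\Ao$-invariant homotopy sheaves implies $\Ao$-local'' is a Postnikov-tower argument with a convergence issue (and the strong-implies-strict input carries Morel's perfectness hypothesis, which the paper inherits as well). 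These gaps are fillable and are precisely the content that the paper outsources to Morel's cited results, so I regard your argument as a correct alternative modulo those standard but nontrivial inputs. Also note a small mismatch: the classifying-space theory of locally trivial morphisms used here is unpointed, so you should work with the unpointed internal hom $\inthom(L_{\Ao}F,L_{\Ao}F)$ rather than its pointed variant.
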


\emph{Structure of the paper:} In \prettyref{sec:classify}, we recall 
preliminaries on model structures on categories of simplicial sheaves, in 
particular homotopy distributivity and homotopy colimit decomposition. 
In \prettyref{sec:prelims}, we recall preliminaries on the Bousfield
localization of simplicial sheaves, in particular regarding propernesss of
the localized model structure. Then \prettyref{sec:fibseq} provides an 
exposition of Jardine's universally $f$-local maps and their properties. 
These properties are used in \prettyref{sec:fibwise} to construct
a fibrewise localization for fibrations of simplicial sheaves. 
\prettyref{sec:bfthm} provides the main characterization result for 
universally $f$-local maps which generalizes the result of Berrick and Dror 
Farjoun. Finally, \prettyref{sec:appl} discusses applications to
$\Ao$-homotopy theory.

\emph{Acknowledgements:} 
The results presented here are taken from my PhD thesis \cite{thesis}
which was  supervised by Annette Huber-Klawitter. I would like to use
the opportunity to thank her for her encouragement and interest in my 
work. I would also like to thank Rick Jardine for pointing out a
mistake in an earlier version, and for his extremely helpful letter on
universally $f$-local maps. All the material in \prettyref{sec:fibseq}
is due to Jardine and is included in this paper with his permission. The
present paper would not have its present form without his input.

\section{Preliminaries on simplicial sheaves}
\label{sec:classify}

\subsection{Model structures on simplicial sheaves}

We will be working in categories of simplicial sheaves. The underlying
site is usually denoted by $T$, the category of sheaves on it by
$\topos{T}$, and the category of simplicial sheaves by
$\simplicial{\topos{T}}$. On this category, there are several model
structures all yielding the same homotopy theory. We will use the
injective model structure, cf. 
\cite[Theorems 18 and
27]{jardine:1996:boolean}.

\begin{theorem}
Let $\mathcal{E}$ be a topos. Then the category
$\simplicial{\mathcal{E}}$ of simplicial objects in $\mathcal{E}$ has
a model structure, where the
\begin{enumerate}[(i)]
\item cofibrations are monomorphisms,
\item weak equivalences are detected on a fixed Boolean localization,
\item fibrations are determined by the right lifting property. 
\end{enumerate}

Moreover, the above definition of weak equivalences does not depend on
the Boolean localization. 
\end{theorem}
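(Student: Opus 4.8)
The statement to prove is the existence of the injective (Jardine) model structure on simplicial objects in a topos $\mathcal{E}$, with monomorphisms as cofibrations, weak equivalences detected on a Boolean localization, and fibrations defined by right lifting. This is Jardine's theorem and the natural route is exactly the one he takes in \cite{jardine:1996:boolean}: reduce the problem over a general Grothendieck topos to the case of simplicial sheaves on a complete Boolean algebra, where sheaves behave essentially like presheaves of sets because every covering sieve splits.

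The plan is as follows. First I would fix a Boolean localization $p\colon \mathcal{B}\to\mathcal{E}$, i.e. a geometric morphism from the topos of sheaves on a complete Boolean algebra with $p^*$ faithful and exact (this exists by Barr's theorem). A map $f$ in $\simplicial{\mathcal{E}}$ is declared a weak equivalence precisely when $p^*f$ is a local weak equivalence of simplicial sheaves on $\mathcal{B}$; the very last sentence of the statement — independence of the choice of $\mathcal{B}$ — I would defer and prove at the end using that any two Boolean localizations are comparable. Cofibrations are monomorphisms, and fibrations are forced by the right lifting property against trivial cofibrations, so two of the three classes are given and the work is in verifying the axioms \textbf{MC1}–\textbf{MC5}. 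Limits and colimits in $\simplicial{\mathcal{E}}$ exist because $\mathcal{E}$ is a topos (\textbf{MC1}); the two-out-of-three and retract axioms (\textbf{MC2}, \textbf{MC3}) are immediate since $p^*$ is exact and preserves retracts and the corresponding axioms hold on $\mathcal{B}$; one half of \textbf{MC4} is the definition of fibration. The substantive steps are: (a) the factorization axioms \textbf{MC5}, and (b) the remaining lifting axiom, that trivial fibrations have the right lifting property against all cofibrations, equivalently that a map which is both a fibration and a weak equivalence lifts against monomorphisms.

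For the factorizations I would use a small-object / transfinite-smallobject argument. The monomorphisms in $\simplicial{\mathcal{E}}$ are generated (as a saturated class, via the bounded cofibration lemma) by a \emph{set} of monomorphisms of bounded cardinality — this is the key set-theoretic input that makes the small object argument run in a Grothendieck topos and is where one must be careful, bounding the cardinality of subobjects in terms of a regular cardinal larger than the cardinality of a generating set and the cardinality of the subobject classifier. Having such a set, the small-object argument produces the (cofibration, trivial fibration) factorization once one knows the maps with the right lifting property against this generating set are exactly the trivial fibrations; for the (trivial cofibration, fibration) factorization one needs a bounded generating set of \emph{trivial} cofibrations, which again comes from a bounded cofibration lemma applied to $p^*$-local weak equivalences. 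The other lifting axiom — a trivial fibration lifts against all monos — is then handled by the standard retract argument: factor and use that the map in question is a retract of the trivial-fibration part.

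The \textbf{main obstacle} is precisely this cardinality bookkeeping: establishing the bounded cofibration lemmas (that a sub-object or a trivial cofibration can be ``filled out'' within a bounded cardinality, so that the generating classes are small), and checking that the class of local weak equivalences is closed under the relevant filtered colimits so that the small-object argument on trivial cofibrations actually yields a trivial cofibration. Everything else — exactness of $p^*$, closure of weak equivalences under two-out-of-three and retracts, the ``easy'' half of \textbf{MC4} — is formal. For the final independence statement I would argue that if $\mathcal{B}$ and $\mathcal{B}'$ are two Boolean localizations, their product (fibre product of toposes, or a common refinement) is again a Boolean localization dominating both, and since pullback along $\mathcal{B}''\to\mathcal{B}$ of a local weak equivalence on $\mathcal{B}$ is a local weak equivalence on $\mathcal{B}''$ and faithfulness lets one detect weak equivalences after further pullback, the two notions of weak equivalence coincide.
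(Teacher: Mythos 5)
The paper offers no proof of this statement at all --- it is recalled from Jardine's \emph{Boolean localization, in practice} (Theorems 18 and 27) --- and your outline follows essentially the same route as Jardine's argument: Barr's theorem supplies the Boolean cover with exact and faithful inverse image, weak equivalences are pulled back along it, cofibrations are monomorphisms, and the factorization and lifting axioms rest on the bounded (trivial) cofibration lemmas feeding a transfinite small-object and retract argument, which you correctly single out as the technical core. The one place you deviate is the independence of the choice of Boolean localization, which you would obtain by comparing two covers through a common refinement; Jardine instead gets it by identifying the Boolean-detected equivalences with the intrinsically defined local weak equivalences, and your variant is workable provided you note that the detection statement must then be applied again to surjections of Boolean toposes themselves.
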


The following proposition recalls the basic properties of this model
structure. Existence is proved in  \cite[Theorems 18 and
27]{jardine:1996:boolean}. Properness and simpliciality
are proven in \cite[Theorem 24]{jardine:1996:boolean}. 
Cellularity is proven in \cite[Theorem 1.4]{hornbostel:2006}.

\begin{proposition}
\label{prop:injcomb}
Let $T$ be any Grothendieck site. Then the injective model
structure of Jardine on the category of (pre-)sheaves of simplicial
sets on $T$ is a proper simplicial and cellular model
structure. 
\end{proposition}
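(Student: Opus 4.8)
The plan is to handle the four assertions --- existence, properness, simpliciality and cellularity --- separately, taking existence as already available (the category of sheaves on $T$ is a Grothendieck topos, so the preceding theorem applies verbatim, and the presheaf case is the same), and deriving the three adjectives from two inputs: a presentation of the injective local structure as a left Bousfield localization of an objectwise structure, and the Boolean localization technique for the one step that is not formal.

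For simpliciality and left properness I would work through the description of the injective local model structure as the left Bousfield localization of the objectwise injective structure on $\simplicial{\mathcal{P}(T)}$ --- cofibrations the monomorphisms, weak equivalences and fibrations detected objectwise --- at the set of descent maps $\hocolim\check{C}\mathcal{U}\to U$, with $\mathcal{U}$ a covering sieve (equivalently, at the bounded hypercovers); the resulting local equivalences coincide with Jardine's local weak equivalences, and the sheaf-theoretic statement follows from the sheafification-inclusion Quillen equivalence. The objectwise structure is combinatorial, left proper and simplicial because $\mathbf{sSet}$ has all of these properties and everything in sight is formed objectwise; left Bousfield localization at a set of maps then preserves left properness and the simplicial structure while leaving the cofibrations unchanged (Hirschhorn), so both are inherited. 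The only genuine verification here is that the pushout-product axiom for the localized structure reduces to the objectwise one --- the cofibration half is immediate since cofibrations are unchanged, and the trivial-cofibration half follows from the fact, itself a consequence of left properness, that tensoring with a simplicial set preserves local weak equivalences between cofibrant objects.

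Right properness is the one genuinely non-formal ingredient, and I expect it to be the main obstacle, since it is the only step where the geometry of the site truly enters. Here I would follow Jardine: choose a Boolean localization $q\colon\mathcal{B}\to\topos{T}$, so that $q^{\ast}$ is exact, preserves local fibrations, and both preserves and reflects local weak equivalences. Given a base-change square whose bottom edge is a local weak equivalence and whose right edge is a local fibration, apply $q^{\ast}$: exactness keeps the square a pullback, over the Boolean topos $\mathcal{B}$ the Boolean structure affords a rigid enough description of local fibrations --- sectionwise Kan fibrations after refinement --- for right properness to hold there by a direct lifting argument, and one then reflects the conclusion back along $q^{\ast}$. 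This is precisely the content of \cite[Theorem 24]{jardine:1996:boolean}.

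Finally, cellularity is verified against Hirschhorn's axioms. Since left Bousfield localization of a cellular model category at a set of maps is again cellular, it suffices to know the objectwise injective structure on $\simplicial{\mathcal{P}(T)}$ is cellular, and this reduces to cellularity of $\mathbf{sSet}$: one checks that the maps $\partial\Delta^n\otimes U\hookrightarrow\Delta^n\otimes U$ with $U$ representable, together with a set $J$ of generating trivial cofibrations (available by combinatoriality), form generating sets with small domains, and that cofibrations --- the monomorphisms --- are effective monomorphisms, all objectwise. The sheaf case then follows because sheafification is a left adjoint preserving monomorphisms; the details are carried out in \cite[Theorem 1.4]{hornbostel:2006}.
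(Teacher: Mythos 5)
Your overall route is, in substance, the one the paper takes: the paper offers no independent argument for this proposition, but cites \cite{jardine:1996:boolean} (Theorems 18, 24, 27) for existence, properness and simpliciality, and \cite{hornbostel:2006} (Theorem 1.4) for cellularity, and your proposal defers to exactly these sources at the only non-formal points (the Boolean-localization argument for right properness, and the verification of Hirschhorn's cellularity axioms). The extra scaffolding you add --- presenting the local injective structure as a left Bousfield localization of the objectwise injective structure in order to get left properness and simpliciality formally from \cite{hirschhorn:2003:modelcats} --- is a legitimate alternative to reading those properties off Jardine's Theorem 24 directly, and correctly isolates right properness as the step where the site genuinely enters.

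Two details in your sketch are wrong as stated, though both are repairable and are handled correctly in the sources you cite. First, localizing at the \v{C}ech maps $\hocolim\check{C}\mathcal{U}\to U$ for covering sieves is not equivalent to localizing at bounded hypercovers: the former yields only \v{C}ech-local equivalences, whereas Jardine's local weak equivalences are the hypercomplete ones, so in general one must localize at a set of bounded hypercovers (Dugger--Hollander--Isaksen); your parenthetical ``equivalently'' can fail, e.g.\ for sites of infinite cohomological dimension. Second, the maps $\partial\Delta^n\otimes U\hookrightarrow\Delta^n\otimes U$ with $U$ representable generate the \emph{projective} cofibrations, not the injective ones; in the injective structure the cofibrations are all monomorphisms, and a generating set consists of (representatives of) $\alpha$-bounded monomorphisms, so it is for these --- not for the cells you list --- that smallness of domains and the effective-monomorphism condition must be checked, which is what \cite{hornbostel:2006} actually does. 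Relatedly, cellularity is not inherited along a Quillen equivalence, so the sheaf case does not follow formally from the presheaf case via sheafification; Hornbostel treats the sheaf category directly.
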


\subsection{Homotopy pullbacks}

Recall that a commutative square
\begin{center}
  \begin{minipage}[c]{10cm}
    \xymatrix{
      A\ar[d]\ar[r] & C\ar[d] \\
      B\ar[r] & D
    }
  \end{minipage}
\end{center}
in a model category $\mathcal{C}$ is called a homotopy pullback if for
some factorization $C\rightarrow \tilde{C}\rightarrow D$ as a  trivial
cofibration $C\rightarrow\tilde{C}$ and a  fibration
$\tilde{C}\rightarrow D$, the induced map
$A\rightarrow B\times_D\tilde{C}$ is a weak equivalence. 
This notion is only well-defined if the model category $\mathcal{C}$
is proper, cf. \cite[Section
II.8]{goerss:jardine:1999:simplicial}. As homotopy pullbacks play a
major role in this paper, all the model categories in sight will be
assumed to be proper. 

An important special case of homotopy pullbacks are those of the form 
\begin{center}
  \begin{minipage}[c]{10cm}
    \xymatrix{
      F\ar[d]\ar[r] & E\ar[d] \\
      \ast\ar[r] & B,
    }
  \end{minipage}
\end{center}
i.e. in which one of the factors is contractible. Such pullbacks are
basically the same thing as fibre sequences. As there is always a
problem with base points and different homotopy types of simplices in
categories of sheaves, it is better to talk generally about homotopy
pullbacks rather than fibre sequences.

\subsection{Homotopy distributivity and colimit decomposition}

Next, we repeat several basic statements on the behaviour of homotopy
limits and colimits in categories of simplicial sheaves.  The main
result needed is the homotopy distributivity of Rezk,
cf. \cite{rezk:1998:sharp}. Results and
preliminaries can also be found in \cite[Section 2]{classify}.

Recall that a diagram $\diagram{X}:\mathcal{I}\rightarrow
\mathcal{C}$ in a model category $\mathcal{C}$ is called
\emph{homotopy colimit diagram} if 
the canonical map $\hocolim\diagram{X}\rightarrow \colim\diagram{X}$
is a weak equivalence. We can now recall the definition of homotopy
distributivity: let $\category{C}$ be a simplicial
model category, let $\category{I}$ be a small category, and let
$f:\diagram{X}\rightarrow \diagram{Y}$ be a morphism of
$\category{I}$-diagrams in $\category{C}$. The diagrams we are most
interested in are the following: 

For any $i\in\category{I}$, we have a commutative square
\begin{center}
  \begin{minipage}[c]{10cm}
    \begin{equation}
      \label{eq:distrib1}
      \xymatrix{
        \diagram{X}(i) \ar[r] \ar[d]_{f(i)} & \colim_\category{I}
        \diagram{X} \ar[d] \\ 
        \diagram{Y}(i) \ar[r] & \colim_\category{I} \diagram{Y}.
      }
    \end{equation}
  \end{minipage}
\end{center}

Moreover, for any $\alpha:i\rightarrow j$ in $\category{I}$ we have a 
commutative square
\begin{center}
  \begin{minipage}[c]{10cm}
    \begin{equation}
      \label{eq:distrib2}
      \xymatrix{
        \diagram{X}(i) \ar[r]^{\diagram{X}(\alpha)} \ar[d]_{f(i)} &
        \diagram{X}(j)
        \ar[d]^{f(j)} \\ 
        \diagram{Y}(i) \ar[r]_{\diagram{Y}(\alpha)} &
        \diagram{Y}(j). 
      }
    \end{equation}
  \end{minipage}
\end{center}

\begin{definition}[Homotopy Distributivity]
In the above situation, we  say that $\category{C}$ 
satisfies \emph{homotopy distributivity} if for any morphism
$f:\diagram{X}\rightarrow \diagram{Y}$ of 
$\category{I}$-diagrams in  $\category{C}$ for which $\diagram{Y}$ is
a homotopy 
colimit diagram, i.e.  $\hocolim_\category{I} \diagram{Y}\rightarrow
\colim_\category{I} \diagram{Y}$ is a weak equivalence, the following
two properties hold: 
\begin{enumerate}[(HD i)]
\item If each square of the form (\ref{eq:distrib1}) is a homotopy
  pullback, then $\diagram{X}$ is a homotopy colimit diagram.
\item If $\diagram{X}$ is a homotopy colimit diagram, and each
  diagram of the form (\ref{eq:distrib2}) is a homotopy  pullback, then
  each diagram of the form (\ref{eq:distrib1}) is also a homotopy pullback.  
\end{enumerate}
\end{definition}

\begin{proposition}
Let $T$ be a site. Then homotopy distributivity holds in the model
category $\simplicial{\topos{T}}$. 
\end{proposition}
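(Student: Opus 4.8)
The plan is to reduce the statement to classical properties of homotopy (co)limits of simplicial sets by passing to a Boolean localization, in the spirit of \cite{rezk:1998:sharp}. Recall from \cite{jardine:1996:boolean} (see also the theorem recalled above) that there is a Boolean localization $p\colon\mathcal{B}\to\topos{T}$, i.e.\ a geometric morphism whose domain is the topos of sheaves on a complete Boolean algebra and whose inverse image $p^{*}$ is faithful and both preserves and reflects local weak equivalences of simplicial objects; moreover $p^{*}$ preserves all colimits, all finite limits, and monomorphisms. Consequently $p^{*}$ is left Quillen for the injective local model structures, hence preserves homotopy colimits; and since these model structures are right proper (\prettyref{prop:injcomb}) and $p^{*}$ preserves finite limits, $p^{*}$ also preserves homotopy pullbacks. (For the last point: factor one leg of a square as a trivial cofibration followed by a fibration and form the strict pullback; $p^{*}$ turns this into the strict pullback of the $p^{*}$-images, and although $p^{*}$ need not preserve the fibration, one may refactor its image and invoke right properness to see that the comparison map into the genuine homotopy pullback of the $p^{*}$-images is still a weak equivalence.) Finally, combining preservation of homotopy pullbacks and of homotopy colimits with the fact that $p^{*}$ reflects local weak equivalences, one checks that $p^{*}$ in addition \emph{reflects} homotopy pullbacks and \emph{reflects} the property of a diagram being a homotopy colimit diagram.

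Granting these properties, homotopy distributivity transfers formally from $\simplicial{\mathcal{B}}$ to $\simplicial{\topos{T}}$. Let $f\colon\diagram{X}\to\diagram{Y}$ be a morphism of $\category{I}$-diagrams in $\simplicial{\topos{T}}$ with $\diagram{Y}$ a homotopy colimit diagram. Since $p^{*}$ commutes with colimits, for every object $i$ (resp.\ every morphism $\alpha$) of $\category{I}$ the square of the form $(\ref{eq:distrib1})$ (resp.\ $(\ref{eq:distrib2})$) associated with $p^{*}f$ is exactly the image under $p^{*}$ of the corresponding square for $f$, and $p^{*}\diagram{Y}$ is again a homotopy colimit diagram. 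Hence, if the hypotheses of (HD i) hold for $f$, then --- $p^{*}$ preserving homotopy pullbacks --- the hypotheses of (HD i) hold for $p^{*}f$; applying (HD i) in $\simplicial{\mathcal{B}}$ gives that $p^{*}\diagram{X}$ is a homotopy colimit diagram; and since $p^{*}$ reflects this property, so is $\diagram{X}$. The argument for (HD ii) is the same, the conclusion ``each $(\ref{eq:distrib1})$ is a homotopy pullback'' being reflected back along $p^{*}$.

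It thus remains to prove homotopy distributivity in $\simplicial{\mathcal{B}}$ for a Boolean topos $\mathcal{B}$. Here the local homotopy theory is close to the sectionwise one --- this being exactly Jardine's reason for introducing Boolean localizations --- and both axioms reduce to standard facts about simplicial sets: (HD i) expresses the stability of homotopy colimits under homotopy pullback (so that forming a homotopy pullback commutes with a homotopy colimit), and (HD ii) is the descent statement that a cartesian transformation between homotopy colimit diagrams remains cartesian over the colimit; both are classical, cf.\ \cite{rezk:1998:sharp} and \cite[Section~2]{classify}. (Alternatively one may bypass the Boolean base case by noting that the injective local model structure presents the hypercomplete $\infty$-topos of sheaves on $T$, in which colimits are universal and satisfy descent --- which is precisely (HD i) together with (HD ii).) The main obstacle above is the assertion that $p^{*}$ preserves homotopy pullbacks: $p^{*}$ does not carry fibrations to fibrations, so this genuinely uses right properness together with the left exactness of $p^{*}$; the other delicate point is the Boolean base case, where one must make precise in what sense weak equivalences, homotopy pullbacks and homotopy colimits over a complete Boolean algebra are controlled sectionwise.
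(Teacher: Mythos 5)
Your reduction along a Boolean localization is a reasonable skeleton (it is essentially how such statements are handled by Rezk and Jardine), but the two load-bearing steps are not actually established. The critical one is the claim that $p^{*}$ preserves homotopy pullbacks. Trace your own argument: you factor one leg as a trivial cofibration $C\rightarrow\widetilde{C}$ followed by an injective fibration $\widetilde{C}\rightarrow D$, apply $p^{*}$, refactor $p^{*}\widetilde{C}\rightarrow p^{*}D$ as $p^{*}\widetilde{C}\rightarrow E\rightarrow p^{*}D$, and then need the comparison $p^{*}B\times_{p^{*}D}p^{*}\widetilde{C}\rightarrow p^{*}B\times_{p^{*}D}E$ to be a local weak equivalence. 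That map is the base change of the weak equivalence $p^{*}\widetilde{C}\rightarrow E$ along the arbitrary map $p^{*}B\rightarrow p^{*}D$, and neither leg in sight is an injective fibration; right properness (\prettyref{prop:injcomb}) only controls base change of weak equivalences along fibrations, so it gives nothing here. What actually saves the situation is that $p^{*}\widetilde{C}\rightarrow p^{*}D$ is a \emph{local} fibration (inverse images preserve local lifting properties) together with the theorem that pullback along a local fibration is a homotopy pullback, i.e.\ that local fibrations are sharp --- but that is one of the main results of \cite{rezk:1998:sharp}, of essentially the same depth as the proposition you are proving, and your reflection statements for homotopy pullbacks depend on the same point. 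The Boolean base case has the same problem: over a complete Boolean algebra the weak equivalences are still local, not sectionwise, so (HD i) and (HD ii) are not literally ``classical facts about simplicial sets''; the usable input is that local fibrations there are sectionwise fibrations (internal axiom of choice), and deducing homotopy distributivity from that is exactly the technical content of Rezk's paper, which your sketch (and the parenthetical $\infty$-topos descent argument) ends up citing rather than proving.

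For comparison, the paper does not reprove this proposition at all: it is quoted from \cite{rezk:1998:sharp}, with \cite{classify} as a secondary reference, and where a concrete instance is needed (the homotopy colimit decomposition in \prettyref{lem:decompfib}) the paper argues on points resp.\ sectionwise on presheaves and then sheafifies, rather than via Boolean localization. So your proposal is not wrong in outline, but as written it is circular where it is not incomplete: the preservation of homotopy pullbacks by $p^{*}$ is argued incorrectly, and the remaining substance is deferred to the very references the proposition rests on. To make it a genuine proof you would need to either import the sharpness of local fibrations explicitly and prove the Boolean (or pointwise/presheaf) case of (HD i)--(HD ii) in detail, or carry out the $\infty$-topos descent argument with an actual comparison between the injective local model structure and the hypercomplete $\infty$-topos.
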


The main consequence of homotopy distributivity is the canonical
homotopy colimit decomposition of morphisms of simplicial
sheaves. This allows to write the source of a morphism as homotopy
colimit of its fibres over simplices of the target. 
We first recall the homotopy colimit decomposition for simplicial
sets: for a simplicial set $X$, we can consider its category of
simplices $\mathbf{\Delta}\downarrow X$ whose objects are morphisms 
$\Delta^n\rightarrow X$ and whose morphisms are the obvious
commutative triangles. The notation $\mathbf{\Delta}\downarrow X$ is
chosen because the category of simplices is the comma category of
objects under the standard simplices. For a morphism of simplicial
sets $f:X\rightarrow Y$, one can then associate a functor
$f^{-1}:\mathbf{\Delta}\downarrow Y\rightarrow\simplicial{\set}$ by
mapping a simplex $\sigma:\Delta^n\rightarrow Y$ to the simplicial set
$f^{-1}(\sigma)$ defined by the following pullback diagram: 
\begin{center}
  \begin{minipage}[c]{10cm}
    \xymatrix{
      f^{-1}(\sigma)\ar[r]\ar[d] & X\ar[d]^f\\
      \Delta^n\ar[r]_\sigma&Y.
    }
  \end{minipage}
\end{center}
There is a canonical morphism of simplicial sets $\hocolim
f^{-1}\rightarrow X$ which is a weak equivalence, cf. \cite[Lemma
IV.5.2]{goerss:jardine:1998:localization}. A similar statement holds
for simplicial sheaves. The right notion to formulate it is the
canonical homotopy colimit decomposition for objects in a
combinatorial model category, cf. \cite{dugger:2001:combinatorial}.  
For the convenience of the reader we recall notation and (a
generalization of) a lemma already formulated in \cite[Section
2.8]{classify}.  
Let $\category{C}$ be a combinatorial model category, $\category{T}$
be a small category. For any functor
$I:\category{T}\rightarrow\category{C}$ and a fixed cosimplicial
resolution
$\Gamma_I:\category{T}\rightarrow\mathbf{\Delta}\category{C}$, we 
obtain a functor
$\category{T}\times\mathbf{\Delta}\rightarrow\category{C}:(U,[n])\mapsto 
\Gamma(n)(U)$ which replaces the standard cosimplicial object
$\mathbf{\Delta}$ in $\simplicial{\set}$ above.  For any object $X$,
we can consider the over-category
$(\category{T}\times\mathbf{\Delta}\downarrow X)$ and the canonical
diagram $(\category{T}\times\mathbf{\Delta}\downarrow 
X)\rightarrow \category{C}:\Gamma(n)(U)\mapsto U\times\Delta^n$ which
is the proper replacement for the category of simplices.

The following lemma was formulated in \cite{classify} only for a
fibration of fibrant simplicial sheaves. In this special case, its
proof is an application of homotopy distributivity.

\begin{lemma}
\label{lem:decompfib}
Let $T$ be a site, and let 
$p:E\rightarrow B$ be a morphism of simplicial sheaves. Then
$p$ is weakly   equivalent to the morphism of simplicial sheaves
\begin{displaymath}
\hocolim\diagram{F}\rightarrow \hocolim(T\times
\mathbf{\Delta}\downarrow B), 
\end{displaymath}
where $(T\times\mathbf{\Delta}\downarrow B)$ is the canonical diagram
associated to some fixed cosimplicial resolution, and the diagram
$\diagram{F}$ is the diagram of fibres: 
$$
\diagram{F}:(T\times\mathbf{\Delta}\downarrow B)\rightarrow\simplicial{\topos{T}}:
(U\times\Delta^n\rightarrow B)\mapsto(U\times\Delta^n)\times_B E.
$$
\end{lemma}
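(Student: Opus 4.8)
The plan is to reduce the general case to the fibrant case already treated in \cite{classify} by a two-step replacement procedure, being careful that the homotopy colimit decomposition is compatible with the manipulations. First I would take an injective fibrant replacement $B \to B'$ of the base and replace $p$ by $p' \colon E' \to B'$ where $E' \to E \times_B B'$ is a fibrant replacement; since the injective model structure is proper (\prettyref{prop:injcomb}), the square $E' \to B'$, $E \to B$ is a homotopy pullback, so $p$ and $p'$ are weakly equivalent as morphisms. One then checks that the over-category $(T \times \mathbf{\Delta} \downarrow B)$ maps to $(T \times \mathbf{\Delta} \downarrow B')$ in a way that becomes an equivalence after passing to homotopy colimits — this is where the weak equivalence $\hocolim (T \times \mathbf{\Delta} \downarrow B) \to B$, which holds for any object of a combinatorial model category by \cite{dugger:2001:combinatorial}, does the work — and that the diagram of fibres $\diagram{F}$ for $p$ is levelwise weakly equivalent to that for $p'$, again using properness to identify point-set fibres over the simplices $U \times \Delta^n$ (which are not fibrant, but this is harmless once $B'$ is fibrant and $p'$ is a fibration) with homotopy fibres.

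Next I would handle the passage from a fibration between fibrant objects to an \emph{arbitrary} morphism $p' \colon E' \to B'$ with $B'$ fibrant: factor $p'$ as a trivial cofibration $E' \to \widetilde{E}$ followed by a fibration $q \colon \widetilde{E} \to B'$. Then $q$ is a fibration of fibrant simplicial sheaves, so the case treated in \cite{classify} applies: $q$ is weakly equivalent to $\hocolim \diagram{F}_q \to \hocolim (T \times \mathbf{\Delta} \downarrow B')$. Because $E' \to \widetilde{E}$ is a weak equivalence and, by properness, pulling back along the simplices $U \times \Delta^n \to B'$ preserves this weak equivalence, the fibre diagrams $\diagram{F}_{p'}$ and $\diagram{F}_q$ agree up to levelwise weak equivalence, hence have weakly equivalent homotopy colimits. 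Stitching the two reductions together yields the statement for the original $p$.

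The main obstacle I expect is the bookkeeping around the \emph{diagrams}, not any single weak equivalence: one must produce an actual zig-zag of natural weak equivalences of $(T \times \mathbf{\Delta} \downarrow B)$-diagrams (or, more honestly, a comparison across the change of indexing category $(T \times \mathbf{\Delta} \downarrow B) \to (T \times \mathbf{\Delta} \downarrow B')$) and know that homotopy colimits of such are invariant. This forces some care: a levelwise weak equivalence of diagrams induces a weak equivalence on homotopy colimits, but the change-of-base-category step requires either a cofinality-type argument or the explicit comparison of the two canonical homotopy colimit decompositions of $B$ and $B'$ sitting over the weak equivalence $B \to B'$. I would isolate this as a sublemma: for a weak equivalence $B \to B'$ in a combinatorial model category, the induced map $\hocolim (T \times \mathbf{\Delta} \downarrow B) \to \hocolim (T \times \mathbf{\Delta} \downarrow B')$ fits into a commutative square with the weak equivalences $\hocolim \to \colim$ down to $B$ and $B'$, and is therefore itself a weak equivalence.

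Finally, a small point worth flagging explicitly in the write-up: the identification of point-set fibres $(U \times \Delta^n) \times_B E$ with homotopy fibres uses that after replacing $p$ by a fibration $q$ over a fibrant base, the pullback $q^{-1}(\sigma)$ along $\sigma \colon U \times \Delta^n \to B'$ is already a homotopy pullback (a strict pullback of a fibration is a homotopy pullback in any right proper model category, regardless of whether $U \times \Delta^n$ is fibrant), so no further replacement is needed at that stage — this is exactly the observation the paragraph preceding the lemma alludes to when it says the special case is ``an application of homotopy distributivity.''
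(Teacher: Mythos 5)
The decisive gap is in your second reduction step. After factoring $p'$ as a trivial cofibration $E'\to\widetilde{E}$ followed by a fibration $q$, you claim that ``by properness, pulling back along the simplices $U\times\Delta^n\to B'$ preserves this weak equivalence'', so that the fibre diagrams of $p'$ and of $q$ are levelwise weakly equivalent. Right properness only allows you to pull a weak equivalence back along a \emph{fibration}; here you would be pulling the weak equivalence $E'\to\widetilde{E}$ back along the projection $(U\times\Delta^n)\times_{B'}\widetilde{E}\to\widetilde{E}$, which is a base change of the simplex $\sigma\colon U\times\Delta^n\to B'$ and is not a fibration. In fact the claimed levelwise equivalence is false in general: it asserts exactly that every strict fibre of $p'$ over a simplex is a homotopy fibre, i.e.\ that \emph{every} map is sharp (universally local for the simplicial structure) --- the special property this paper is about, which arbitrary maps do not have. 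Already for simplicial sets ($T=\operatorname{pt}$), for the quotient map $\Delta^1\to\Delta^1/\partial\Delta^1$ the strict fibre over the vertex is two points, while the corresponding fibre of a fibration replacement is weakly equivalent to the (discrete) integers. The content of \prettyref{lem:decompfib} is precisely that the homotopy colimit of the \emph{strict} fibres still recovers $E$ even though the individual strict fibres are not homotopy fibres; your closing paragraph, which identifies point-set fibres with homotopy fibres, gives this content away. Your first step has related problems (the object $E\times_B B'$ is not even defined, since the fibrant replacement goes $B\to B'$), and the comparison across the change of indexing categories $(T\times\mathbf{\Delta}\downarrow B)\to(T\times\mathbf{\Delta}\downarrow B')$, which you rightly flag, is not supplied: your proposed sublemma only compares the canonical decompositions of the bases, not the homotopy colimits of the two fibre diagrams, which live over different index categories.

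For comparison, the paper's proof makes no fibrancy or fibration reductions at all. It observes that the composite $\hocolim\diagram{F}\to\colim\diagram{F}\to E$ has second map an isomorphism because pullbacks commute with colimits in a topos (Rezk, Proposition 3.7), so it suffices to show that $\diagram{F}$ is a homotopy colimit diagram; this is then checked on points (respectively sectionwise in the presheaf category, followed by sheafification), where it reduces to the classical statement for an \emph{arbitrary} map of simplicial sets, \cite[IV.5.2]{goerss:jardine:1999:simplicial}. The remark before the lemma about homotopy distributivity refers to how the special case (a fibration of fibrant objects) was handled in \cite{classify}, not to the mechanism of the present proof. To repair your argument you would have to prove sharpness-type statements strictly stronger than the lemma itself; the pointwise reduction avoids this entirely.
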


\begin{proof}
We have a composition of morphisms 
$$\hocolim \diagram{F}\rightarrow\colim\diagram{F} \rightarrow E.
$$
The second morphism is an isomorphism by the distributivity in categories of
sheaves, cf. e.g. \cite[Proposition 3.7]{rezk:1998:sharp}. It then suffices 
to prove that the diagram $\diagram{F}$ is a homotopy colimit
diagram. 

If the topos has enough points, this can be checked on points, cf. 
\cite[Corollary 2.10]{classify}. For a point $x$ of the topos $\topos{T}$, 
the diagram $x^\ast(\mathcal{F})$ is the diagram of fibres of the simplicial
set map $x^\ast(p):x^\ast(E)\rightarrow x^\ast(B)$:
$$
x^\ast(\diagram{F}):(\mathbf{\Delta}\downarrow x^\ast(B))\rightarrow
\simplicial{\set}:
(\sigma:\Delta^n\rightarrow B)\mapsto (x^\ast(p))^{-1}(\sigma)=\Delta^n\times_B E.
$$
In particular, the 
composition $x^\ast(\hocolim\diagram{F})\rightarrow x^\ast(\colim\diagram{F})
\rightarrow E$ is the composition $\hocolim (x^\ast(\mathcal{F}))
\rightarrow \colim(x^\ast(\mathcal{F}))\rightarrow x^\ast(E)$. But the latter 
is known to be a homotopy colimit diagram, cf. 
\cite[IV.5.2]{goerss:jardine:1999:simplicial}.

The same argument as above shows that the assertion is true in the
presheaf category, because colimits (and therefore homotopy colimits)
of simplicial presheaves are computed pointwise. The general case then
follows from the properties of the sheafification functor.
\end{proof}

\begin{corollary}
\label{cor:2jardine}
Consider the following commutative triangle, in which $p_1$ and $p_2$
are fibrations:
\begin{center}
  \begin{minipage}[c]{10cm}
    \xymatrix{
      E_1\ar[rr]^f\ar[dr]_{p_1} && E_2\ar[dl]^{p_2}\\
      &B
    }
  \end{minipage}
\end{center}
Then the morphism $f$ is a weak equivalence if one of the following
holds:
\begin{enumerate}[(i)]
\item The induced morphisms $p_1^{-1}(\sigma)\rightarrow
  p_2^{-1}(\sigma)$ are weak equivalences for all objects
  $\sigma:\Delta^n\times U\rightarrow B$ of the category of simplices
  $(\mathbf{\Delta}\times T)\downarrow B$. 
\item The induced morphisms $p_1^{-1}(x)\rightarrow p_2^{-1}(x)$ are
  weak equivalences for all $x:U\rightarrow B$.
\end{enumerate}
\end{corollary}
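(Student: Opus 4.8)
The plan is to deduce both statements directly from the homotopy colimit decomposition of Lemma \ref{lem:decompfib}, using the fact that a weak equivalence of diagrams induces a weak equivalence of homotopy colimits. First I would apply Lemma \ref{lem:decompfib} to each of $p_1$ and $p_2$ separately. This writes $p_i$ as weakly equivalent to $\hocolim \diagram{F}_i \rightarrow \hocolim(T\times\mathbf{\Delta}\downarrow B)$, where $\diagram{F}_i$ is the diagram on $(T\times\mathbf{\Delta}\downarrow B)$ sending $\sigma:U\times\Delta^n\rightarrow B$ to $(U\times\Delta^n)\times_B E_i$. Crucially, the target diagram $(T\times\mathbf{\Delta}\downarrow B)$ is the \emph{same} for both, and the map $f$ over $B$ induces a natural transformation $\diagram{F}_1\rightarrow\diagram{F}_2$ of diagrams over this common index category, compatible (up to the weak equivalences of Lemma \ref{lem:decompfib}) with $f$ itself. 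So on homotopy colimits, $f$ is identified with $\hocolim(\diagram{F}_1\rightarrow\diagram{F}_2)$.

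For part (i), the hypothesis is exactly that the natural transformation $\diagram{F}_1\rightarrow\diagram{F}_2$ is an objectwise weak equivalence: since $p_1,p_2$ are fibrations, the point-set pullback $p_i^{-1}(\sigma) = (U\times\Delta^n)\times_B E_i$ computes the homotopy fibre, and the hypothesis says these agree. A weak equivalence of diagrams induces a weak equivalence on homotopy colimits (homotopy colimit is a homotopy functor, cf. the combinatorial model category framework of \cite{dugger:2001:combinatorial} already invoked before Lemma \ref{lem:decompfib}), hence $f$ is a weak equivalence. For part (ii), I would reduce to part (i). The objects $x:U\rightarrow B$ are the $0$-simplices in $(T\times\mathbf{\Delta}\downarrow B)$, and I claim checking weak equivalence of fibres over these suffices to conclude it over all $\sigma:U\times\Delta^n\rightarrow B$. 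Indeed $\sigma$ factors through the projection $U\times\Delta^n\rightarrow U$ followed by some $x:U\rightarrow B$ only up to simplicial homotopy — more carefully, the inclusion $U\times\Delta^0\rightarrow U\times\Delta^n$ (any vertex) is a weak equivalence, and pulling back the fibration $p_i$ along it gives a weak equivalence $p_i^{-1}(x\circ\sigma_0)\rightarrow p_i^{-1}(\sigma)$ by right properness; combining with the hypothesis over $U$-points gives the hypothesis of (i).

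The main obstacle I anticipate is the bookkeeping in the previous paragraph's reduction of (ii) to (i): one must check that every simplex $\sigma:U\times\Delta^n\rightarrow B$ is, after restriction to a vertex of $\Delta^n$, of the form handled by hypothesis (ii), and that the resulting comparison maps of fibres are weak equivalences — this uses right properness of $\simplicial{\topos{T}}$ (Proposition \ref{prop:injcomb}) applied to the fibration $p_i$ pulled back along the weak equivalence $U\times\Delta^0\hookrightarrow U\times\Delta^n$. A secondary, more cosmetic point is verifying that the natural transformation $\diagram{F}_1\rightarrow\diagram{F}_2$ really does assemble into a map of diagrams commuting with the equivalences of Lemma \ref{lem:decompfib}; this is formal from the universal property of pullback and the naturality of the homotopy colimit decomposition, so I would state it without belabouring the diagram chase.
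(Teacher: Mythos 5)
Your proposal is correct and follows essentially the same route as the paper: part (i) is exactly the argument via \prettyref{lem:decompfib} together with the fact that an objectwise weak equivalence of diagrams induces a weak equivalence on homotopy colimits, and part (ii) is reduced to (i) by restricting each simplex $\sigma:\Delta^n\times U\rightarrow B$ to a vertex $U\simeq U\times\Delta^0\hookrightarrow \Delta^n\times U$ and using properness to see that the induced maps of fibres $p_i^{-1}(\sigma(v))\rightarrow p_i^{-1}(\sigma)$ are weak equivalences. The side remark about factoring $\sigma$ up to homotopy through the projection is unnecessary, but your actual vertex-inclusion argument is the one the paper uses.
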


\begin{proof}
(i) We have a commutative square 
\begin{center}
  \begin{minipage}[c]{10cm}
    \xymatrix{
      \hocolim\mathcal{F}_1\ar[r]\ar[d]&\hocolim\mathcal{F}_2\ar[d] \\
      E_1\ar[r]_f&E_2
    }
  \end{minipage}
\end{center}
in which the two vertical morphisms are weak equivalences by
\prettyref{lem:decompfib}. The top horizontal morphism is a homotopy
colimit of weak equivalences, therefore the bottom horizontal morphism
is a weak equivalence. 

(ii) follows from (i) by considering the following diagram in which
all squares are homotopy pullbacks:
\begin{center}
  \begin{minipage}[c]{10cm}
    \xymatrix{
      p_i^{-1}(\sigma(v))\ar[r]^\simeq \ar[d] &
      p_i^{-1}(\sigma)\ar[r]\ar[d] & E_i\ar[d]^{p_i}\\
      U\ar[r]_v^\simeq &\Delta^n\times U\ar[r]_\sigma &B.
    }
  \end{minipage}
\end{center}
The right square is a homotopy pullback because $p_i$ is a fibration
(and the model structure is proper), the left because there are two
parallel weak equivalences. It then suffices to check weak
equivalences after restriction to vertices of simplices.
\end{proof}

\section{Preliminaries on localization functors}
\label{sec:prelims}

\subsection{Bousfield Localization} We repeat the standard definitions of
local objects and local weak equivalences. These definitions
can be found in \cite{farjoun:1996:cellular,hirschhorn:2003:modelcats} for the
case of simplicial sets, and in \cite{morel:voevodsky:1999:a1} for the case of
simplicial sheaves. 

Let $\category{C}$ be a model category, and let $f:X\rightarrow Y$ be a
morphism of cofibrant objects. 

\begin{definition}[Local Objects, Weak Equivalences]
An object $A\in \category{C}$ is called \emph{$f$-local} if $A$ is fibrant
and the following morphism is a bijection for each $B\in
\operatorname{Ho}\mathcal{C}$:  
\begin{displaymath}
\hom_{\operatorname{Ho}\mathcal{C}}(B\times Y,A)\rightarrow
\hom_{\operatorname{Ho}\mathcal{C}}(B\times X,A).  
\end{displaymath}

A morphism $g:A\rightarrow B\in\category{C}$ is called an \emph{$f$-local
  weak equivalence} if for any $f$-local object $C$, the following morphism is
a bijection:
\begin{displaymath}
\hom_{\operatorname{Ho}\mathcal{C}}(B,C)\rightarrow
\hom_{\operatorname{Ho}\mathcal{C}}(A,C). 
\end{displaymath}
\end{definition}

\begin{remark}
\begin{enumerate}[(i)]
\item
The above is the definition of local given in
\cite{morel:voevodsky:1999:a1}. It is easy to check that it coincides
with the definition in \cite{goerss:jardine:1998:localization}, where
one requires a weak equivalence of simplicial sets:
\begin{displaymath}
\Hom(B,C)\rightarrow
\Hom(A,C). 
\end{displaymath}
This in turn is equivalent to requiring weak equivalences on internal
homs:
\begin{displaymath}
\inthom(B,C)\rightarrow
\inthom(A,C). 
\end{displaymath}
\item Note that there is a difference between pointed and
  unpointed. The definition above is for a general model category,
  using unpointed mapping spaces. In a pointed model category, one
  uses the pointed mapping spaces. For connected objects both notions
  coincide.
\end{enumerate}
\end{remark}

Of course, one can consider more general localizations, i.e. localizations
with respect to a \emph{set} of maps as in \cite[Section
2.2]{morel:voevodsky:1999:a1}, or homology localization as in
\cite[Section 3]{goerss:jardine:1998:localization}.  
If $f$ is null-homotopic such a localization is
also called \emph{nullification}, and we also use $L_W$ to denote the
corresponding localization functor. The 
most important applications we have in mind are the $\Ao$-nullification
functors $L_{\Ao}$ on $\simplicial{\topos{\sms}}$.

\subsection{Localization Functors} This paragraph repeats the theorem
on existence and universality of localization functors for simplicial
sheaves. Most of the elementary facts in
\cite[1.A.8]{farjoun:1996:cellular}  are easy
consequences of this theorem, which is proved in \cite[Theorem
2.2.5]{morel:voevodsky:1999:a1} and in similar form in \cite[Theorem
4.4]{goerss:jardine:1998:localization}.  

We start recalling necessary  definitions related to localization
functors in a general model category. 

\begin{definition}
A functor $F:\category{C}\rightarrow\category{C}$ is called
\emph{coaugmented} if there is a natural transformation
$j:\id_\category{C}\rightarrow F$. A coaugmented functor $F$ is called 
\emph{idempotent} if the two natural maps
$j_{FA},Fj_A:FA\rightrightarrows FFA$ are weak equivalences and
homotopic to each other. The coaugmentation map $j_A$ is homotopy
universal with respect to maps into local spaces if any map
$A\rightarrow B$ into a local space $T$ factors uniquely (up to
homotopy) through $j_A:A\rightarrow FA$. The functor $F$ is called
\emph{simplicial} if it is 
compatible with the simplicial structure, i.e. if there exist functorial
morphisms $\sigma:(FA)\otimes K\rightarrow F(A \otimes K)$ for any object
$A\in\category{C}$ and any simplicial set $K$. These morphisms have to satisfy
some rather obvious conditions described in \cite[Definition
1.C.8]{farjoun:1996:cellular}. The functor $F$ is called \emph{continuous} if
it induces a morphism on inner function spaces  
\begin{displaymath}
\inthom(A,B)\rightarrow \inthom(FA,FB),
\end{displaymath}
which is compatible with composition. 
\end{definition}

We recall the existence of
localizations for simplicial sheaf categories from
\cite[Theorem 4.4]{goerss:jardine:1998:localization}, which is
the proper generalization of \cite[Theorem A.3]{farjoun:1996:cellular}.    
The existence of the $f$-local model structure is proven in \cite[Theorem
4.8]{goerss:jardine:1998:localization}. Note that the existence of
localizations for simplicial sheaves is a global result, in the sense that
it does not simply follow from the existence of localizations of simplicial
sets by looking at the points of the topos.

\begin{theorem}
\label{thm:locmodel}
Let $f:X\rightarrow Y$ be any cofibration in \simplicial{\topos{T}}
and suppose $\alpha$ is an infinite cardinal which is an upper bound
for the cardinalities of both $Y$ and the set of morphisms of
$T$. Then there exists a functor $L_f$, called the
\emph{$f$-localization functor}, which is coaugmented and
homotopically idempotent. Any two such functors are naturally weakly
equivalent to each other. The map $A\rightarrow L_fA$ is a
homotopically universal map to $f$-local spaces. Moreover, $L_f$ can
be chosen to be simplicial and continuous. 

There is a simplicial model structure on $\simplicial{\topos{T}}$
where the cofibrations are monomorphisms, weak equivalences are $f$-local weak
equivalences and fibrations are defined via the right lifting property.
\end{theorem}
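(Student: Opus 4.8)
The two assertions are really one: once the $f$-local model structure exists, $L_f$ can be taken to be a functorial fibrant replacement in it, and every property claimed for $L_f$ is a formal consequence of the model axioms together with the identification of the $f$-local objects with the $f$-fibrant ones. So the plan is to (1) produce the $f$-local model structure and (2) read off $L_f$ from it. For (1), the cofibrations (monomorphisms) and the class $W_f$ of $f$-local weak equivalences are prescribed, and closure under retracts, the $2$-out-of-$3$ property for $W_f$, and closure of cofibrations under pushouts and transfinite composition are immediate; the substance is the pair of functorial factorizations, and this is where the cardinal $\alpha$ is used.

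First I would record the \emph{horn description}: an object $A$ is $f$-local if and only if, for a fibrant model of $A$, the canonical map
\[
\inthom(\Delta^n\times U\times Y, A)\longrightarrow \inthom\bigl((\partial\Delta^n\times U\times Y)\cup(\Delta^n\times U\times X), A\bigr)
\]
is a trivial fibration of simplicial sets for all $n\ge 0$ and all representable $U$ — equivalently, $A$ has the right lifting property against the set $S_f$ of pushout--product maps of $f\colon X\to Y$ with the generating cofibrations $\partial\Delta^n\times U\hookrightarrow \Delta^n\times U$. The members of $S_f$ are $\alpha$-bounded monomorphisms and $f$-local weak equivalences. Next comes the key point, Jardine's \emph{bounded cofibration condition}: with $\alpha$ an upper bound for $|Y|$ and for the cardinality of the set of morphisms of $T$, every monomorphism $i\colon A\hookrightarrow B$ which is an $f$-local weak equivalence has the property that, for every $\alpha$-bounded subobject $C\subseteq B$, there is an $\alpha$-bounded subobject $D$ with $C\subseteq D\subseteq B$ such that $D\cap A\hookrightarrow D$ is again a monic $f$-local weak equivalence. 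This bound produces a genuine \emph{set} $J_f$ of generating $f$-trivial cofibrations — the $\alpha$-bounded ones — and one checks that a map has the right lifting property against $J_f$ exactly when it has the right lifting property against all monic $f$-local weak equivalences. The simplicial small object argument applied to the ordinary generating cofibrations and to $J_f$ then yields the two functorial factorizations, and the remaining lifting axiom follows by the usual retract argument, using $2$-out-of-$3$ for $W_f$ and that trivial fibrations are in particular $f$-trivial fibrations. That the resulting model structure is simplicial is checked by the pushout--product axiom exactly as in the unlocalized case, since the operations involved preserve $f$-local weak equivalences.

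For (2), set $L_fA$ to be a functorial fibrant replacement, with coaugmentation $j_A\colon A\to L_fA$ the accompanying natural trivial cofibration. Since the replacement is built from the small object argument on $J_f$, whose maps lie in $W_f$, the map $j_A$ is always an $f$-local weak equivalence and $L_fA$ is $f$-fibrant, hence $f$-local by the horn description. Homotopical idempotence holds because $j_{L_fA}$ and $L_fj_A$ are both $f$-local weak equivalences (respectively since $L_fA$ is already $f$-fibrant, and since $L_f$ preserves weak equivalences), and they are homotopic as precomposition with the $f$-local weak equivalence $j_A$ is injective on homotopy classes of maps into the $f$-local object $L_fL_fA$. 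Homotopy universality of $j_A$ is immediate: for $f$-local (hence $f$-fibrant) $T$, any map $A\to T$ lifts along the trivial cofibration $j_A$ against $T\to\pt$, uniquely up to homotopy. Uniqueness up to natural weak equivalence follows by comparing any two such functors through their universal properties in the standard way. Finally, $L_f$ may be taken simplicial and continuous by running the enriched small object argument, so that the factorizations, and hence the fibrant replacement, respect the simplicial structure and induce maps on internal function objects compatible with composition.

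The main obstacle is the bounded cofibration condition in the sheaf setting: unlike for simplicial sets, $f$-locality cannot be verified pointwise, so the cardinality bookkeeping must be carried out after sheafification, controlling how an $\alpha$-bounded subobject is enlarged to an $\alpha$-bounded $f$-trivial subobject. Everything else is a transcription of the simplicial-set argument; the complete details are in \cite[Theorem 4.4 and Theorem 4.8]{goerss:jardine:1998:localization} and \cite[Theorem 2.2.5]{morel:voevodsky:1999:a1}.
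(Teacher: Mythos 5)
Your proposal is correct and takes essentially the same route as the paper, which proves nothing here itself but simply recalls the result from Goerss--Jardine (Theorems 4.4 and 4.8) and Morel--Voevodsky (Theorem 2.2.5); your outline --- the horn/lifting description of $f$-local objects, Jardine's bounded cofibration condition giving a set of generating $f$-trivial cofibrations, the small object argument for the factorizations, and $L_f$ as the resulting functorial fibrant replacement --- is a faithful summary of exactly those cited proofs. The only caveat is bookkeeping: in the sources the generating set consists of monic $f$-trivial cofibrations bounded by a suitable cardinal constructed from $\alpha$ (not necessarily $\alpha$ itself), but this does not affect the structure of the argument.
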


\subsection{Properness}

In \cite[Chapter 3]{hirschhorn:2003:modelcats}, Bousfield localizations
of general model categories are investigated. As shown in
\cite[Proposition 3.4.4 and Theorem 4.1.1]{hirschhorn:2003:modelcats},
left Bousfield localizations preserve left properness, i.e. the left
Bousfield localization of a left proper model category is again left 
proper. 

The $f$-local model structure for a morphism $f:X\rightarrow Y$ is not
in general right proper. It is known \cite[Theorem
  A.5]{jardine:2000:motivic}, that the $f$-local model structure is
proper if $f$ is of the form $\ast\rightarrow I$, i.e. $L_f$ is a
nullification. 
A special case of this is the properness of the homotopy theory of a
site with interval, which is proved in 
\cite[Theorem 2.2.7 and Section 2.3]{morel:voevodsky:1999:a1}. 

We mention again that we will be working a lot with $f$-local homotopy
pullbacks, i.e. homotopy pullbacks in the $f$-local model structure. 
Therefore, throughout the rest of the paper,  we will assume that
$f:X\rightarrow Y$ is a morphism of simplicial sheaves on a site $T$
such that the $f$-local model structure on $\simplicial{\topos{T}}$ is
proper. Most of the time, this will be explicitly mentioned anyway.

\section{$f$-local sharp maps: universally $f$-local maps}
\label{sec:fibseq}

In this section, we will discuss a class of maps called
\emph{universally $f$-local maps}, which should be thought of
``$f$-local quasi-fibrations'' - they are not necessarily fibrations
in the $f$-local model structure but give rise to $f$-local fibre
sequences. 

\begin{definition}
\label{def:univfloc}
Let $T$ be a site and let $f:X\rightarrow Y$ be a morphism of
simplicial sheaves in $\simplicial{\topos{T}}$ such that the $f$-local
model structure is proper. A morphism
$p:E\rightarrow B$ of simplicial sheaves is called \emph{universally
  $f$-local} if for any representable $U\in T$ and any simplex
$\sigma:\Delta^n\times U\rightarrow B$ the following pullback diagram
is an $f$-local homotopy pullback:
\begin{center}
  \begin{minipage}[c]{10cm}
    \xymatrix{
      p^{-1}(\sigma)\ar[r]\ar[d] & E\ar[d]^p\\
      \Delta^n\times U\ar[r]_\sigma&B.
    }
  \end{minipage}
\end{center}
\end{definition}

\begin{remark}
\begin{enumerate}[(i)]
\item As $\sigma:\Delta^n\times U\rightarrow B$ ranges over the
  various simplices of the base simplicial sheaf $B$, $p^{-1}(\sigma)$
  ranges through the possible fibres of $p:E\rightarrow B$. The
  definition of universally $f$-local map makes sure that all these
  objects - which could be called \emph{local homotopy fibres of $p$}
  - have the right homotopy type. Note however, that the different
  representable objects $U\in T$ usually have different homotopy
  types in $\simplicial{\topos{T}}$, so that for different $U_1,U_2$,
  the fibres over $\Delta^n\times U_1\rightarrow B$ and
  $\Delta^m\times U_2\rightarrow B$ will usually not be weakly
  equivalent - this is the major   difference to the case of
  simplicial sets where all simplices $\Delta^n$ are weakly equivalent
  to the point.
\item
Note that if $\mathcal{C}$ is a model category with a terminal
object $\operatorname{pt}$, $f:X\rightarrow Y$ is a morphism in
$\mathcal{C}$,  $p:E\rightarrow B$ is universally $f$-local and
$x:\operatorname{pt}\rightarrow E$ is a choice of base-point of $E$,
then $(p^{-1}(x),x)\rightarrow (E,x)\rightarrow (B,p(x))$ is a fibre
sequence in the sense of \cite[Definition
6.2.6]{hovey:1998:modelcats} in the pointed model category
$(\mathcal{C},\operatorname{pt})$. In particular, a universally
$f$-local map induces long exact homotopy sequences for any choice of
base points. 
\end{enumerate}
\end{remark}

Even more is true: any pullback involving a universally $f$-local map
is an $f$-local homotopy pullback, provided of course the respective
$f$-local model structure is proper. All arguments in the following
use homotopy pullbacks and therefore depend on properness of the
underlying $f$-local model structure, this is mentioned most of the
time. 

\begin{lemma}
\label{lem:lem3}
Let $T$ and $f:X\rightarrow Y$ be as in \prettyref{def:univfloc}, and
assume that the $f$-localization of $\simplicial{\topos{T}}$ is
proper. A map $p:E\rightarrow B$ of simplicial sheaves is
universally $f$-local if and only if for all morphisms $g:Z\rightarrow
B$ the following pullback diagram is an $f$-local homotopy pullback: 
\begin{center}
  \begin{minipage}[c]{10cm}
    \xymatrix{
      Z\times_BE\ar[r]\ar[d]_{g^\ast(p)} & E\ar[d]^p\\
      Z\ar[r]_g&B.
    }
  \end{minipage}
\end{center}
\end{lemma}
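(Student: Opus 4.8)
The plan is to prove the non-trivial direction (the ``if'' direction; the ``only if'' direction is immediate since simplices $\sigma:\Delta^n\times U\to B$ are a special case of maps $g:Z\to B$) by reducing an arbitrary pullback square along $g:Z\to B$ to the pullback squares along simplices, using the homotopy colimit decomposition of \prettyref{lem:decompfib} together with homotopy distributivity. First I would apply the homotopy colimit decomposition to $g:Z\to B$: writing $Z$ as the homotopy colimit of the diagram $\diagram{F}_g$ of fibres $g^{-1}(\tau) = (\Delta^m\times V)\times_B Z$ indexed over the category of simplices $(T\times\mathbf{\Delta}\downarrow Z)$, and noting that $B$ itself (or rather the target diagram) is a homotopy colimit diagram. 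Pulling back $p:E\to B$ along $g$ and then further along each simplex $\tau:\Delta^m\times V\to Z$ of $Z$, one obtains the fibre of $g^\ast(p)$ over $\tau$, which is just $p^{-1}(g\circ\tau)$ — the fibre of $p$ over the composite simplex $g\circ\tau:\Delta^m\times V\to B$.

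The key step is then the following: since $p$ is universally $f$-local, each square
\begin{center}
  \begin{minipage}[c]{10cm}
    \xymatrix{
      p^{-1}(g\circ\tau)\ar[r]\ar[d] & E\ar[d]^p\\
      \Delta^m\times V\ar[r]_{g\circ\tau}&B
    }
  \end{minipage}
\end{center}
is an $f$-local homotopy pullback, and this square factors through the pullback square defining $Z\times_B E$ over $Z$. I would invoke the pasting lemma for homotopy pullbacks (valid because the $f$-local structure is proper) to conclude that the square exhibiting $p^{-1}(g\circ\tau)$ as the fibre of $g^\ast(p)$ over $\tau$ is an $f$-local homotopy pullback for every simplex $\tau$ of $Z$. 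Now apply (HD ii) of homotopy distributivity to the morphism of $(T\times\mathbf{\Delta}\downarrow Z)$-diagrams $\diagram{F}_{g^\ast(p)}\to \diagram{F}_g$: the target $\diagram{F}_g$ is a homotopy colimit diagram (decomposing $Z$), the total object $\hocolim\diagram{F}_{g^\ast(p)}$ computes $Z\times_B E$ by \prettyref{lem:decompfib}, and the comparison squares of the form (\ref{eq:distrib2}) between fibres over simplices $\tau\to\tau'$ of $Z$ are homotopy pullbacks — this last point again follows from the pasting lemma applied to the already-established homotopy pullback squares over $B$. Homotopy distributivity then yields that each square of the form (\ref{eq:distrib1}), i.e. the square
\begin{center}
  \begin{minipage}[c]{10cm}
    \xymatrix{
      p^{-1}(g\circ\tau)\ar[r]\ar[d] & Z\times_B E\ar[d]^{g^\ast(p)}\\
      \Delta^m\times V\ar[r]_\tau & Z,
    }
  \end{minipage}
\end{center}
is an $f$-local homotopy pullback.

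The final step is to upgrade from ``the pullback of $g^\ast(p)$ along every simplex of $Z$ is an $f$-local homotopy pullback'' to ``$g^\ast(p)$ is universally $f$-local'', hence by definition the square along $g$ is an $f$-local homotopy pullback. This is essentially a repackaging: once fibres of $g^\ast(p)$ over all simplices of $Z$ have the correct $f$-local homotopy type, one feeds this back through \prettyref{lem:decompfib} and the homotopy colimit decomposition of $Z$ one more time — the comparison map from $Z\times_B E$ (built as $\hocolim$ of the correct fibres) to the square over $Z$ is a weak equivalence because it is a homotopy colimit of weak equivalences, exactly as in the proof of \prettyref{cor:2jardine}(i). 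The main obstacle I anticipate is bookkeeping around the pasting lemma: one must be careful that the outer square over $B$, which is an $f$-local homotopy pullback \emph{by hypothesis}, combines correctly with the (strict) pullback square defining $Z\times_B E$ over $Z$ — the relevant fact is that a strict pullback square along a map is automatically a homotopy pullback once one of the maps can be replaced by a fibration, and that in a proper model category pasting of homotopy pullbacks is well-behaved. A secondary subtlety is making sure the cosimplicial resolution and the indexing category $(T\times\mathbf{\Delta}\downarrow Z)$ are handled uniformly when passing between $Z$ and $B$; this is routine but should be stated cleanly.
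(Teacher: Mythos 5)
Your opening reduction---decomposing $Z$ into its simplices and observing that the fibre of $g^\ast(p)$ over $\tau:\Delta^m\times V\rightarrow Z$ is $p^{-1}(g\circ\tau)$---is the right first move and matches the paper's strategy, but the mechanism you use afterwards has genuine gaps. First, the pasting step in your ``key step'' is circular: to conclude that the square exhibiting $p^{-1}(g\circ\tau)$ as the fibre of $g^\ast(p)$ over $\tau$ is an $f$-local homotopy pullback, the pasting lemma needs the \emph{right-hand} square---the pullback square along $g:Z\rightarrow B$ itself---to already be an $f$-local homotopy pullback, and that is exactly the statement being proved. Second, the appeal to (HD ii) is not available here: homotopy distributivity is established only for the simplicial model structure, while your comparison squares of type (\ref{eq:distrib2}) are only $f$-local homotopy pullbacks, not simplicial ones; and an $f$-local analogue of homotopy distributivity is in fact false, as the paper's closing remark shows with $EG\rightarrow BG$ for $G$ $\Ao$-invariant but not strongly $\Ao$-invariant---a locally trivial map whose fibre-comparison squares are $f$-local homotopy pullbacks but which is not universally $f$-local. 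So the step ``distributivity then yields that each square of the form (\ref{eq:distrib1}) is an $f$-local homotopy pullback'' rests on a principle that does not hold.

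What makes the lemma work, and what your final paragraph is missing, is a comparison with an $f$-local fibration replacement of $p$: factor $p$ as an $f$-local trivial cofibration $E\rightarrow\widetilde{E}$ followed by an $f$-local fibration $q:\widetilde{E}\rightarrow B$. By properness and the definition of homotopy pullback, the square along $g$ is an $f$-local homotopy pullback if and only if $Z\times_B E\rightarrow Z\times_B\widetilde{E}$ is an $f$-local weak equivalence. Decomposing both sides over the simplices of $Z$ via \prettyref{lem:decompfib}, the induced map on fibres over $\sigma:\Delta^n\times U\rightarrow Z$ is $p^{-1}(g\circ\sigma)\rightarrow q^{-1}(g\circ\sigma)$, which is an $f$-local weak equivalence because $p$ is universally $f$-local and $q$ is an $f$-local fibration (using properness); since a homotopy colimit of $f$-local weak equivalences is an $f$-local weak equivalence, the map of total spaces is one too, and the proof is complete. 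Note that this uses only the fact that homotopy colimits preserve ($f$-local) weak equivalences---much weaker than any $f$-local distributivity---and it bypasses your final ``upgrade'' step, which in any case does not follow ``by definition'': universality of $g^\ast(p)$ concerns its pullbacks over simplices of $Z$ and says nothing directly about the square along $g$ relating $g^\ast(p)$ to $p$.
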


\begin{proof}
The ``if''-direction is clear, so let $p:E\rightarrow B$ be
universally $f$-local. Factor $p$ as 
$$
p: E\stackrel{j}{\longrightarrow}\widetilde{E}
\stackrel{q}{\longrightarrow} B,
$$
where $j:E\rightarrow \widetilde{E}$ is an $f$-local weak equivalence
and $q:\widetilde{E}\rightarrow B$ is an $f$-local fibration. 
Using properness of the $f$-local model structure, we need to show
that the induced map $Z\times_B 
E\rightarrow Z\times_B\widetilde{E}$ is an $f$-local weak
equivalence. By \prettyref{lem:decompfib}, this map is weakly
equivalent to the map of homotopy colimits
$$
\hocolim_{T\times\mathbf{\Delta}\downarrow Z}\, (g^\ast(p))^{-1}(\sigma)
\rightarrow 
\hocolim_{T\times\mathbf{\Delta}\downarrow Z}\,(g^\ast(q))^{-1}(\sigma),
$$
where the diagram $(g^\ast(p))^{-1}(\sigma)$ is the diagram of the fibres
of $g^\ast(p):Z\times_BE\rightarrow Z$ over the simplices
$\sigma:\Delta^n\times U\rightarrow Z$ of $Z$, and the same for
$(g^\ast(q))^{-1}(\sigma)$. 
But for any simplex $\sigma:\Delta^n\times U\rightarrow Z$ of $Z$, the
induced map 
$$
(g^\ast(p))^{-1}(\sigma)=p^{-1}(g\circ \sigma)\rightarrow
q^{-1}(g\circ\sigma)=(g^\ast(q))^{-1}(\sigma)
$$ 
is an $f$-local weak equivalence, since $p$ was assumed to be
universally $f$-local and the $f$-local model structure was assumed to
be proper. But then the morphism between diagrams
consists of $f$-local weak equivalences only, so the above homotopy
colimit is an $f$-local weak equivalence. This shows the claim. 
\end{proof}

Note that the above result also establishes that the property of being
universally $f$-local is stable under pullbacks.

\begin{corollary}
\label{cor:pbstab}
Under the assumptions of \prettyref{lem:lem3}, if $p:E\rightarrow
B$ is universally $f$-local and $g:Z\rightarrow B$ is any morphism of
simplicial sheaves, then the map $g^\ast(p):Z\times_BE \rightarrow Z$ is
universally $f$-local. 
\end{corollary}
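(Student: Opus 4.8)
The plan is to deduce this directly from Lemma \ref{lem:lem3} by composing pullback squares. Suppose $p:E\rightarrow B$ is universally $f$-local and $g:Z\rightarrow B$ is arbitrary; I want to show $g^\ast(p):Z\times_B E\rightarrow Z$ is universally $f$-local, so by Lemma \ref{lem:lem3} it suffices to check that for every morphism $h:W\rightarrow Z$ the outer pullback square
\begin{center}
  \begin{minipage}[c]{10cm}
    \xymatrix{
      W\times_Z(Z\times_B E)\ar[r]\ar[d] & Z\times_B E\ar[d]^{g^\ast(p)}\\
      W\ar[r]_h & Z
    }
  \end{minipage}
\end{center}
is an $f$-local homotopy pullback.

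First I would identify $W\times_Z(Z\times_B E)$ with $W\times_B E$ via the canonical isomorphism coming from the universal property of fibre products, compatible with the composite $W\xrightarrow{h} Z\xrightarrow{g} B$. This gives a factorization of the pullback square above as the horizontal pasting of two pullback squares: the left-hand square with vertical maps over $W\rightarrow Z$ (namely $W\times_B E\rightarrow W$ and $Z\times_B E\rightarrow Z$), and the right-hand square with vertical maps $Z\times_B E\rightarrow Z$ and $E\rightarrow B$ over $g:Z\rightarrow B$. The right-hand square is an $f$-local homotopy pullback by Lemma \ref{lem:lem3} applied to $p$ and $g$. The left-hand square is the pullback of $E\rightarrow B$ along $g\circ h:W\rightarrow B$ in one step — equivalently, it is an $f$-local homotopy pullback again by Lemma \ref{lem:lem3} applied to $p$ and $g\circ h$.

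Finally I would invoke the standard pasting lemma for homotopy pullbacks in a proper model category (applied here in the $f$-local model structure, which is proper by assumption): if the right-hand square is a homotopy pullback, then the outer square is a homotopy pullback if and only if the left-hand square is. Since both inner squares are $f$-local homotopy pullbacks, the outer square is too. As $h:W\rightarrow Z$ was arbitrary, Lemma \ref{lem:lem3} gives that $g^\ast(p)$ is universally $f$-local.

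The only point requiring minor care — and the closest thing to an obstacle — is making sure the identification $W\times_Z(Z\times_B E)\cong W\times_B E$ is strictly compatible with the relevant projection maps so that the diagram genuinely decomposes as a pasting of the two squares named above; this is the usual elementary fact about iterated pullbacks and poses no real difficulty. Everything else is a formal consequence of Lemma \ref{lem:lem3} and the pasting lemma for homotopy pullbacks in proper model categories.
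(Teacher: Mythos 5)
Your overall strategy is the right one, and it is exactly the argument the paper intends: the paper gives no separate proof of \prettyref{cor:pbstab}, recording it as an immediate consequence of \prettyref{lem:lem3} via pasting of $f$-local homotopy pullbacks in the (proper) $f$-local structure. Also, checking the condition for every $h:W\rightarrow Z$ is more than necessary --- by \prettyref{def:univfloc} it suffices to treat simplices $\sigma:\Delta^n\times U\rightarrow Z$ --- but that costs nothing, since the ``if'' direction of \prettyref{lem:lem3} covers it.

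However, as written your bookkeeping in the pasting step is off, and the final deduction is circular. The square you must certify --- the pullback of $g^\ast(p)$ along $h$ --- is precisely the \emph{left-hand} square of your horizontal pasting (after the identification $W\times_Z(Z\times_BE)\cong W\times_BE$); it is \emph{not} ``the pullback of $E\rightarrow B$ along $g\circ h$ in one step''. The square which \prettyref{lem:lem3} applied to $p$ and $g\circ h$ certifies is the \emph{outer} composite square, whose right-hand vertical map is $p:E\rightarrow B$ itself. So the sentence justifying the left-hand square is invalid, and your concluding use of the pasting lemma (``since both inner squares are homotopy pullbacks, the outer square is too'') only re-derives what \prettyref{lem:lem3} already gave and never establishes the square needed to apply \prettyref{lem:lem3} to $g^\ast(p)$. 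The repair is one line, using exactly the pieces you assembled: the right-hand square is an $f$-local homotopy pullback by \prettyref{lem:lem3} applied to $p$ and $g$; the outer square is an $f$-local homotopy pullback by \prettyref{lem:lem3} applied to $p$ and $g\circ h$; and the pasting lemma \cite[Lemma II.8.22]{goerss:jardine:1999:simplicial}, run in the other direction (right a homotopy pullback, hence outer a homotopy pullback if and only if left is), then yields that the left-hand square is an $f$-local homotopy pullback. Since $h$ (equivalently, the simplex $\sigma$) was arbitrary, $g^\ast(p)$ is universally $f$-local.
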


The universally $f$-local maps play the role in the $f$-local
model category of the quasi-fibrations in \cite{dold:thom},
the sharp maps in \cite{rezk:1998:sharp} and the locally trivial
morphisms in \cite{classify} - they are a replacement for honest
fibrations that still give rise to fibre sequences but are easier to
handle. 

\begin{lemma}
\label{lem:lem4}
Let $T$ and $f:X\rightarrow Y$ be as in \prettyref{def:univfloc}, and
assume that the $f$-localization of $\simplicial{\topos{T}}$ is
proper. A map $p:E\rightarrow B$ of simplicial sheaves is
universally $f$-local if and only if for any diagram
\begin{center}
  \begin{minipage}[c]{10cm}
    \xymatrix{
      Z\times_BE \ar[r]^{p^\ast(g)} \ar[d] &
      W\times_BE\ar[r]\ar[d] & E\ar[d]^p\\ 
      Z\ar[r]_g &W\ar[r]&B
    }
  \end{minipage}
\end{center}
with $g$ an $f$-local weak equivalence, the map $p^\ast(g)$ is also an
$f$-local weak equivalence. 
\end{lemma}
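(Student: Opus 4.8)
The plan is to prove both implications of this characterization by relating the two-square diagram to the pullback-compatibility criterion of \prettyref{lem:lem3}.

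\emph{Sufficiency.} Suppose $p$ is universally $f$-local. Given a diagram as in the statement with $g$ an $f$-local weak equivalence, I would first invoke \prettyref{lem:lem3} twice: both the outer rectangle over $B$ (with the map $Z\rightarrow B$) and the right-hand square over $B$ (with the map $W\rightarrow B$) are $f$-local homotopy pullbacks. Now I would use the standard ``pasting lemma'' for homotopy pullbacks, valid in the proper $f$-local model structure: since the outer rectangle and the right-hand square are both $f$-local homotopy pullbacks, the left-hand square $Z\times_B E\rightarrow W\times_B E$ over $g:Z\rightarrow W$ is also an $f$-local homotopy pullback. Finally, since $g$ is an $f$-local weak equivalence, a square that is an $f$-local homotopy pullback with one side a weak equivalence has its opposite side a weak equivalence as well; hence $p^\ast(g)$ is an $f$-local weak equivalence.

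\emph{Necessity.} Conversely, assume that $p^\ast(g)$ is an $f$-local weak equivalence whenever $g$ is. To verify the defining condition of \prettyref{def:univfloc}, fix a simplex $\sigma:\Delta^n\times U\rightarrow B$. As in the proof of \prettyref{lem:lem3}, factor $p$ as $E\stackrel{j}{\to}\widetilde E\stackrel{q}{\to}B$ with $j$ an $f$-local weak equivalence and $q$ an $f$-local fibration; I must show $(\Delta^n\times U)\times_B E\rightarrow(\Delta^n\times U)\times_B\widetilde E$ is an $f$-local weak equivalence. The idea is to instantiate the hypothesis with a suitable weak equivalence $g$: one takes $W=\widetilde E$ (or more precisely applies the assumption to the map $E\rightarrow\widetilde E$, viewed over $B$ via $q\circ$, noting $E\times_B(-)$ receives a map from $E$), so that $j$ itself plays the role of $g$ and the pulled-back map $j^\ast$ over $\Delta^n\times U$ is the map in question. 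Some care is needed because $j$ is a map into $\widetilde E$ over $B$, not into $B$ directly; I would set $Z=E$, $W=\widetilde E$, with $g=j$, and the two vertical maps the pullbacks along $q$ restricted over $\sigma$, so that $p^\ast(g)$ restricted to the fibre over $\sigma$ is exactly $p^{-1}(\sigma)\rightarrow q^{-1}(\sigma)$; the hypothesis then gives that this is an $f$-local weak equivalence, which is precisely what \prettyref{def:univfloc} requires.

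\emph{Main obstacle.} The technical heart is the pasting lemma for $f$-local homotopy pullbacks — asserting that, in a proper model category, if the outer rectangle and one inner square of a two-square diagram are homotopy pullbacks then so is the other inner square. This is standard (see e.g.\ \cite[Section II.8]{goerss:jardine:1999:simplicial}) but relies essentially on right properness of the $f$-local structure, which is why the properness hypothesis is carried throughout. A secondary subtlety, in the necessity direction, is bookkeeping: matching up the generic map $g$ in the hypothesis with the fibrant-replacement map $j$ and checking that restricting the pulled-back square over the simplex $\sigma$ recovers exactly the fibre comparison map $p^{-1}(\sigma)\rightarrow q^{-1}(\sigma)$. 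Both of these are routine given the machinery already developed (\prettyref{lem:decompfib}, \prettyref{lem:lem3}, and homotopy distributivity), so the argument is essentially an exercise in assembling existing tools.
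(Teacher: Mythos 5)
Your sufficiency direction is essentially the paper's own argument: \prettyref{lem:lem3} makes the outer rectangle and the right-hand square $f$-local homotopy pullbacks, the pasting lemma (valid because the $f$-local structure is proper) makes the left-hand square one, and then $p^\ast(g)$ is an $f$-local weak equivalence because $g$ is. The necessity direction, however, has a genuine gap. After factoring $p$ as $E\stackrel{j}{\longrightarrow}\widetilde{E}\stackrel{q}{\longrightarrow}B$, the map you must show to be an $f$-local weak equivalence, namely $(\Delta^n\times U)\times_B E\rightarrow (\Delta^n\times U)\times_B\widetilde{E}$, is the pullback $\sigma^\ast(j)$ of the total-space map $j$ along the base map $\sigma$; it is \emph{not} of the form $p^\ast(g)$ for any map $g$ of base objects, so the hypothesis does not apply to it directly. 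Your instantiation $Z=E$, $W=\widetilde{E}$, $g=j$ only yields that $E\times_B E\rightarrow \widetilde{E}\times_B E$ is an $f$-local weak equivalence, and the subsequent step --- ``restricting over the fibre of $\sigma$'' to obtain $p^{-1}(\sigma)\rightarrow q^{-1}(\sigma)$ --- is exactly the statement being proved: restricting an $f$-local weak equivalence over a simplex of the base does not in general give an $f$-local weak equivalence (this failure is precisely what universal $f$-locality/sharpness measures; compare the $EG\rightarrow BG$ example at the end of \prettyref{sec:appl}). So as written the argument is circular.

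The repair, which is the paper's route, is to factor the \emph{other} leg. Given any $g:Z\rightarrow B$ (by \prettyref{lem:lem3} it suffices to treat arbitrary $g$, and the simplices of \prettyref{def:univfloc} are a special case), factor $g$ as $Z\stackrel{j}{\longrightarrow}W\stackrel{q}{\longrightarrow}B$ with $j$ an $f$-local weak equivalence and $q$ an $f$-local fibration. Now $j$ is literally a map of objects over $B$ as in the hypothesis, which therefore gives that $p^\ast(j):Z\times_B E\rightarrow W\times_B E$ is an $f$-local weak equivalence; since $W\times_B E$ is the strict pullback of $p$ along the $f$-local fibration $q$, right properness of the $f$-local structure lets you conclude that the square over $g$ is an $f$-local homotopy pullback, hence $p$ is universally $f$-local. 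Note this uses the properness-dependent fact that a homotopy pullback can be detected by factoring either leg of the square; with your factorization of $p$ the hypothesis simply has nothing to say about the comparison map you need.
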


\begin{proof}
If $p$ is universally $f$-local, then the outer square and the right
square are $f$-local homotopy pullbacks by \prettyref{lem:lem3}. By
the homotopy pullback lemma \cite[Lemma
II.8.22]{goerss:jardine:1999:simplicial}, the left square is also an
$f$-local homotopy pullback. But then
it is easy to see that $p^\ast(g)$ must be an $f$-local weak equivalence
as well. 

Now assume that the condition is satisfied. By \prettyref{lem:lem3},
it suffices to check that the diagram 
\begin{center}
  \begin{minipage}[c]{10cm}
    \xymatrix{
      Z\times_BE\ar[r]\ar[d]_{g^\ast(p)} & E\ar[d]^p\\
      Z\ar[r]_g&B.
    }
  \end{minipage}
\end{center}
is a homotopy pullback for any map $g:Z\rightarrow B$. 
Factor $g$ as 
$$
g: Z\stackrel{j}{\longrightarrow}W
\stackrel{q}{\longrightarrow} B,
$$
where $j:Z\rightarrow W$ is an $f$-local weak equivalence
and $q:W\rightarrow B$ is an $f$-local fibration. By the assumption,
the morphism $p^\ast(j):Z\times_BE\rightarrow W\times_BE$ is an $f$-local
weak equivalence. Therefore, the above diagram is in fact an $f$-local
homotopy pullback, so $p:E\rightarrow B$ is universally $f$-local.
\end{proof}

The above result states that universally $f$-local maps are exactly
the sharp maps in the sense of Rezk for the $f$-local model structure,
cf. \cite{rezk:1998:sharp}.  It implies in particular that $f$-local
fibrations are universally $f$-local. Moreover, it implies that
\emph{simplicial fibrations over $f$-local bases are universally
  $f$-local:} 

\begin{corollary}
\label{cor:cor6}
Under the assumptions of \prettyref{lem:lem4}, if $p:E\rightarrow
B$ is a simplicial fibration and $B$ is $f$-local and fibrant, then $p$
is universally $f$-local.  
\end{corollary}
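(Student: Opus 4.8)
The plan is to reduce the statement to \prettyref{lem:lem4} together with the following consequence of right properness of the $f$-local model structure: \emph{over an $f$-local base, every simplicial homotopy pullback square is an $f$-local homotopy pullback square}. Granting this, the argument is short. Since $p$ is a simplicial fibration and the injective model structure is proper, for any $g\colon Z\to W$ over $B$ the two outer pullback squares in
\[
\xymatrix{
Z\times_B E\ar[r]\ar[d] & E\ar[d]^p & W\times_B E\ar[l]\ar[d]\\
Z\ar[r] & B & W\ar[l]
}
\]
are simplicial homotopy pullbacks with $f$-local lower right corner $B$, hence $f$-local homotopy pullbacks. When $g$ is an $f$-local weak equivalence, comparing the two squares along $g$, $\id_E$ and $\id_B$ shows that $Z\times_B E\to W\times_B E$ is an $f$-local weak equivalence, so by \prettyref{lem:lem4} the map $p$ is universally $f$-local. (Equivalently, one may invoke \prettyref{lem:lem3} directly: for every $g\colon Z\to B$ the left square above is a simplicial homotopy pullback over the $f$-local base $B$, hence an $f$-local homotopy pullback.)

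It remains to prove that over an $f$-local base simplicial homotopy pullbacks are $f$-local homotopy pullbacks; equivalently, that the simplicial fibration $p$ over the $f$-local base $B$ is sharp for the $f$-local model structure in the sense of \cite{rezk:1998:sharp}, i.e.\ (by \prettyref{lem:lem4}) that base change along $p$ carries $f$-local weak equivalences of objects over $B$ to $f$-local weak equivalences. Factoring a given $f$-local weak equivalence over $B$ into an $f$-local trivial cofibration followed by an $f$-local trivial fibration, and noting that $f$-local trivial fibrations coincide with injective trivial fibrations (the two model structures share the same cofibrations) whose base changes are again trivial fibrations, one reduces to the case of an $f$-local trivial cofibration. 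Base change along $p$ is a pullback functor in a topos, hence preserves all colimits and all monomorphisms; combined with the stability of the class of $f$-local trivial cofibrations under retracts, transfinite composition, and — by left properness of the $f$-local structure — pushout along cofibrations, this reduces the claim to a single generating $f$-local trivial cofibration $\iota\colon C\to D$ together with its structure map $D\to B$. For the generators inherited from the injective model structure the base change $C\times_B E\to D\times_B E$ is the pullback of the simplicial weak equivalence $\iota$ along $D\times_B E\to D$, itself a base change of the simplicial fibration $p$, hence a simplicial weak equivalence by right properness of the injective structure.

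The main obstacle is the family of ``localizing'' generators, the pushout--products of (a cofibrant model of) $f$ with the boundary inclusions $\partial\Delta^n\hookrightarrow\Delta^n$. For these, $C\times_B E\to D\times_B E$ is again of this pushout--product shape, built from the base change $f'$ of $f$ along a simplicial fibration, and the point is to verify that $f'$ is an $f$-local weak equivalence — whence the base change is an $f'$-local, hence a fortiori an $f$-local, trivial cofibration. This is exactly where the $f$-locality of $B$ is used: the structure map of such a generator lands in $B$, and a diagram chase using the homotopy pullback lemma \cite[Lemma II.8.22]{goerss:jardine:1999:simplicial} together with right properness of the $f$-local structure identifies the point-set pullback of $p$ along that structure map with the corresponding $f$-local homotopy pullback, which forces $f'$ to be an $f$-local weak equivalence. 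I expect this last reconciliation — the fact that pulling $p$ back over an $f$-local base already computes the $f$-local homotopy fibre — to be the genuinely delicate point; the rest is formal bookkeeping with \prettyref{lem:lem4}, the homotopy colimit decomposition of \prettyref{lem:decompfib}, and the closure properties of $f$-local (trivial) cofibrations.
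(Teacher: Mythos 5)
Your opening reduction is fine and is exactly how the paper uses \prettyref{lem:lem4}: everything rests on the single claim that base change along the simplicial fibration $p$ with $f$-local fibrant codomain $B$ carries $f$-local weak equivalences to $f$-local weak equivalences (equivalently, that simplicial homotopy pullbacks over $B$ are $f$-local homotopy pullbacks). But the paper does not prove this claim at all --- it quotes it, citing \cite[Lemma A.3]{jardine:2000:motivic}, and the corollary is then immediate from \prettyref{lem:lem4}. Your attempt to reprove that input is where the proposal breaks down.

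There are two genuine gaps. First, your cellular induction assumes that the $f$-local trivial cofibrations are generated by the injective generators together with the pushout--products of (a cofibrant model of) $f$ with the inclusions $\partial\Delta^n\subset\Delta^n$. The localization theorem does not give you this: those pushout--products (the $f$-horns) detect $f$-local objects among fibrant ones, but they do not cofibrantly generate the class of $f$-local trivial cofibrations; the generating set in \cite[Theorem 4.8]{goerss:jardine:1998:localization} (or in Hirschhorn) is the set of $\alpha$-bounded $f$-trivial cofibrations, produced by a cardinality argument, with no explicit pushout--product description. So the reduction to your two explicit families is unjustified. Second, and decisively, your treatment of the ``localizing'' generators is circular: you justify that the base change $f'$ of $f$ is an $f$-local weak equivalence by asserting that the point-set pullback of $p$ along the relevant structure map already computes the $f$-local homotopy pullback --- but by \prettyref{lem:lem3} that assertion \emph{is} the statement that $p$ is universally $f$-local, i.e.\ the corollary itself; you flag it yourself as ``the genuinely delicate point'' and leave it unproved. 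That missing step is precisely the content of Jardine's Lemma A.3 (whose proof proceeds by a different, adjunction/function-complex argument rather than a cell induction), so as written the proposal does not establish the corollary.
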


\begin{proof}
This is a consequence of \prettyref{lem:lem4} and \cite[Lemma
A.3]{jardine:2000:motivic}. 
\end{proof}

Our goal in the sequel will be to characterize universally $f$-local
maps. 

\begin{theorem}
\label{thm:localization}
Let $T$ be a site and let $f:X\rightarrow Y$ be a
morphism of simplicial sheaves on $\simplicial{\topos{T}}$. Assume
that the $f$-local model structure on $\simplicial{\topos{T}}$ is
proper.   

Let $p:E\rightarrow B$ be a simplicial fibration and let
$j:B\rightarrow L_fB$ be an $f$-local fibrant replacement. Then $p$ is
universally $f$-local if and only if for any $U\in T$ and any simplex
$\sigma:\Delta^n\times U\rightarrow B$, the induced morphism 
$p^{-1}(\sigma)\rightarrow (j\circ p)^{-1}(j\circ\sigma)$ is an
$f$-local weak equivalence. 
\end{theorem}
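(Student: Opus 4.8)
The plan is to reduce the statement to a comparison of homotopy colimit decompositions, in the spirit of \prettyref{lem:lem3} and \prettyref{cor:2jardine}.

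First, the "only if" direction. Suppose $p:E\rightarrow B$ is universally $f$-local. Fix a simplex $\sigma:\Delta^n\times U\rightarrow B$. By \prettyref{cor:pbstab}, the pullback $(\Delta^n\times U)\times_B E\rightarrow \Delta^n\times U$ is again universally $f$-local, and by definition of universally $f$-local (applied to the pullback) together with \prettyref{lem:lem3} we obtain that the square
\begin{center}
  \begin{minipage}[c]{10cm}
    \xymatrix{
      p^{-1}(\sigma)\ar[r]\ar[d] & E\ar[d]^p\\
      \Delta^n\times U\ar[r]_\sigma&B
    }
  \end{minipage}
\end{center}
is an $f$-local homotopy pullback. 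On the other hand, since $j:B\rightarrow L_fB$ is an $f$-local weak equivalence and $p$ is universally $f$-local, \prettyref{lem:lem4} gives that the induced map $p^{-1}(\sigma)=(\Delta^n\times U)\times_B E\rightarrow (\Delta^n\times U)\times_{L_fB}E$ is an $f$-local weak equivalence — here one factors $j\circ\sigma$ through $B$ and uses that $p^\ast$ of the $f$-local weak equivalence $j$ is an $f$-local weak equivalence. Since $(j\circ p)^{-1}(j\circ\sigma)$ is by definition the pullback of $j\circ p$ along $j\circ\sigma$, and $j\circ p$ factors through $E\rightarrow B\rightarrow L_fB$, this pullback is canonically identified with $(\Delta^n\times U)\times_{L_fB}E$. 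Hence $p^{-1}(\sigma)\rightarrow (j\circ p)^{-1}(j\circ\sigma)$ is an $f$-local weak equivalence, as desired.

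For the converse, assume the fibrewise condition holds for all simplices $\sigma:\Delta^n\times U\rightarrow B$, and let us show $p$ is universally $f$-local. Factor $j\circ p:E\rightarrow L_fB$ as an $f$-local weak equivalence $E\rightarrow \overline{E}$ followed by an $f$-local fibration $\bar q:\overline{E}\rightarrow L_fB$; since $L_fB$ is $f$-local and fibrant, \prettyref{cor:cor6} shows $\bar q$ is universally $f$-local, hence so is the pullback $E':=B\times_{L_fB}\overline{E}\rightarrow B$ by \prettyref{cor:pbstab}. We have a canonical map $E\rightarrow E'$ over $B$ (from the factorization), and by the "only if" direction already proved — applied to the universally $f$-local map $E'\rightarrow B$ — together with the hypothesis, the fibres $p^{-1}(\sigma)$ and $(E')^{-1}(\sigma)=(\Delta^n\times U)\times_{L_fB}\overline{E}$ are both $f$-local weakly equivalent to $(j\circ p)^{-1}(j\circ\sigma)$ for every $\sigma$. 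Therefore, after taking $f$-local fibrant replacements of $E$ and $E'$ over $B$, \prettyref{cor:2jardine}(i) (applied in the $f$-local model structure, using properness) shows $E\rightarrow E'$ is an $f$-local weak equivalence. Since $E'\rightarrow B$ is universally $f$-local and $E\rightarrow E'$ is an $f$-local weak equivalence over $B$, a routine homotopy-pullback-lemma argument (as in \prettyref{lem:lem4}) gives that $p:E\rightarrow B$ is universally $f$-local.

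The main obstacle I anticipate is bookkeeping around the various pullbacks and fibrant replacements so that the "canonical" identifications $(j\circ p)^{-1}(j\circ\sigma)\simeq (\Delta^n\times U)\times_{L_fB}E$ are genuinely $f$-local weak equivalences and not merely maps — this requires knowing that $p$ is a simplicial fibration (so ordinary pullbacks already compute the right homotopy type before localizing) and invoking properness of the $f$-local structure at each step. A secondary subtlety is that \prettyref{cor:2jardine} as stated is for the injective model structure; one must either re-run \prettyref{lem:decompfib} in the $f$-local model structure (which is legitimate since it too is proper and the homotopy colimit decomposition argument only uses distributivity and properness) or phrase the fibre comparison directly via \prettyref{lem:lem3}. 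I expect the latter route is cleaner: rather than comparing $E$ and $E'$ globally, one checks the universally-$f$-local pullback square for $p$ directly, decomposing $Z$ by its simplices and reducing to the fibrewise hypothesis simplex by simplex, exactly as in the proof of \prettyref{lem:lem3}.
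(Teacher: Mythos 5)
Your overall architecture is reasonable, but the key step is not justified, and the justification you do offer is not valid as stated. In the ``only if'' direction you identify $(j\circ p)^{-1}(j\circ\sigma)$ with $(\Delta^n\times U)\times_{L_fB}E$ (correct, it is the same strict pullback) and then claim the comparison map from $p^{-1}(\sigma)$ is an $f$-local equivalence because ``$p^\ast$ of the $f$-local weak equivalence $j$ is an $f$-local weak equivalence.'' But $j:B\rightarrow L_fB$ is not a map over $B$, so \prettyref{lem:lem4} cannot be applied to it, and base change along an arbitrary map does not preserve $f$-local weak equivalences --- that is exactly the phenomenon the theorem is about. What you actually need is that the canonical map $i:\Delta^n\times U\rightarrow j^{-1}(j\circ\sigma)=(\Delta^n\times U)\times_{L_fB}B$ is an $f$-local weak equivalence; then \prettyref{lem:lem4}, applied to the map $i$ over $B$, gives your claim. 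Establishing this requires first replacing $j$ by a simplicial fibration (the hypothesis only says $j$ is an $f$-local fibrant replacement, typically a cofibration), then invoking \prettyref{cor:cor6}: the fibration onto the fibrant $f$-local object $L_fB$ is universally $f$-local, so its strict pullback along $j\circ\sigma$ is an $f$-local homotopy pullback, properness makes the pulled-back map $j^{-1}(j\circ\sigma)\rightarrow\Delta^n\times U$ an $f$-local equivalence, and $2$-out-of-$3$ gives $i$. This WLOG-plus-\prettyref{cor:cor6} step is precisely where the paper's proof does its work (its squares {\bf II}, {\bf III}), and it is missing from your argument. The same gap recurs in your converse: the claim that $p'^{-1}(\sigma)=(\Delta^n\times U)\times_{L_fB}\overline{E}$ is $f$-locally equivalent to $(j\circ p)^{-1}(j\circ\sigma)$ does not follow from ``the only-if direction applied to $E'\rightarrow B$'' (that compares $p'^{-1}(\sigma)$ with $(j\circ p')^{-1}(j\circ\sigma)$, not with $(j\circ p)^{-1}(j\circ\sigma)$); what is needed is again \prettyref{cor:cor6}, now applied to the simplicial fibration $j\circ p$ over $L_fB$, plus properness, to see that pulling the $f$-local equivalence $E\rightarrow\overline{E}$ back along $j\circ\sigma$ remains an equivalence.

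Two further remarks. First, your converse is genuinely different in shape from the paper's proof (which extracts both directions from a single three-square diagram via the homotopy pullback lemma, with no global comparison of total spaces), and it can be completed: the step ``$f$-local equivalences on all fibres over simplices imply $E\rightarrow E'$ is an $f$-local equivalence'' is exactly the argument inside \prettyref{lem:lem3}, namely the simplicial homotopy colimit decomposition of \prettyref{lem:decompfib} combined with the fact that a homotopy colimit of objectwise $f$-local equivalences is an $f$-local equivalence; the final transfer of universal $f$-locality across an $f$-local equivalence over $B$ is fine. Second, of the two fixes you propose for the \prettyref{cor:2jardine} issue, only the second is sound: ``re-running \prettyref{lem:decompfib} in the $f$-local model structure'' is not legitimate, because $f$-local homotopy distributivity fails (see the paper's closing remark on $EG\rightarrow BG$); one must decompose simplicially and only then use that left Bousfield localization makes hocolim preserve $f$-local equivalences, i.e. argue as in \prettyref{lem:lem3}.
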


\begin{proof}
Without loss of generality we can assume that $p$ and $j$ are
simplicial fibrations: first factor $j$ as
$B\rightarrow \widetilde{B}\rightarrow L_fB$ with the first map a
trivial cofibration and the second map a fibration. Then factor the
composition $E\rightarrow B\rightarrow\widetilde{B}$ as $E\rightarrow
\widetilde{E}\rightarrow\widetilde{B}$ with the first map a trivial
cofibration and the second map a fibration. All the factorizations are
done in the simplicial model structure, therefore the replacement
$\widetilde{p}:\widetilde{E}\rightarrow\widetilde{B}$ is universally
$f$-local if and only if $p:E\rightarrow B$ is. 

In the following, we assume that $p$ and $j$ are simplicial
fibrations. We want to show that for each $U\in T$ and each simplex
$\sigma:\Delta^n\times U\rightarrow B$ of $B$, the pullback diagram 
\begin{center}
  \begin{minipage}[c]{10cm}
    \xymatrix{
      p^{-1}(\sigma)\ar[r]\ar[d] & E\ar[d]^p\\
      \Delta^n\times U\ar[r]_\sigma&B.
    }
  \end{minipage}
\end{center}
is an $f$-local homotopy pullback. Consider now the following diagram
\begin{center}
  \begin{minipage}[c]{10cm}
    \xymatrix{
      p^{-1}(\sigma) \ar[rr]^{p^\ast(i)} \ar[d] \ar@{}[drr]|{\txt{\bf{I}}}
      &&(j\circ p)^{-1}(j\circ \sigma)\ar[rr]\ar[d] 
      \ar@{}[drr]|{\txt{\bf{II}}}&&
      E\ar[d]^p\\ 
      \Delta^n\times U \ar[rrd]_{\operatorname{id}} \ar[rr]^i
      &&j^{-1}(j\circ\sigma)\ar[d]_{(j\circ\sigma)^\ast(j)}\ar[rr]
      \ar@{}[drr]|{\txt{\bf{III}}}&&B\ar[d]^j \\ 
      &&\Delta^n\times U\ar[rr]_{j\circ \sigma} &&L_f B.
    }
  \end{minipage}
\end{center}
in which the squares {\bf I}-{\bf III} are pullbacks. The morphism
$i:\Delta^n\times U\rightarrow j^{-1}(j\circ\sigma)$ is a consequence
of the universal property for the pullback square {\bf III}. 

The squares {\bf III} and {\bf II+III} are $f$-local homotopy
pullbacks by \prettyref{cor:cor6}. By the homotopy pullback lemma 
\cite[Lemma II.8.22]{goerss:jardine:1999:simplicial}, the square {\bf
  II} is also an $f$-local homotopy pullback. By the same lemma, the
square {\bf I+II} is an $f$-local homotopy pullback if and only if
{\bf I} is an $f$-local homotopy pullback. 

The map $(j\circ\sigma)^\ast(j)$ is an $f$-local weak equivalence
since {\bf III} is a 
homotopy pullback and $j$ is an $f$-local weak equivalence -- it is (a
simplicial replacement of) the localization morphism $B\rightarrow
L_fB$. By $2$-out-of-$3$, the map $i:\Delta^n\times U\rightarrow
j^{-1}(j\circ\sigma)$ is an $f$-local weak equivalence. But then the
square {\bf I} is an $f$-local homotopy pullback if and only if
$p^\ast(i):p^{-1}(\sigma)\rightarrow (j\circ p)^{-1}(j\circ\sigma)$ is an
$f$-local weak equivalence. The criterion is proved.
\end{proof}

\begin{corollary}
\label{cor:thm8}
Let $T$ be a site and let $f:X\rightarrow Y$ be a
morphism of simplicial sheaves on $\simplicial{\topos{T}}$. Assume
that the $f$-local model structure on $\simplicial{\topos{T}}$ is
proper.   

A morphism $p:E\rightarrow B$ of simplicial sheaves is universally
$f$-local if for any diagram
\begin{center}
  \begin{minipage}[c]{10cm}
    \xymatrix{
      & E \ar[r]^i \ar[d]_p & \widetilde{E} \ar[d]^{\widetilde{p}}\\ 
      Z \ar[r] & B\ar[r]_j & L_fB
    }
  \end{minipage}
\end{center}
with $j:B\rightarrow L_fB$ an $f$-local fibrant replacement,
$\widetilde{p}$ a simplicial fibration and $i$ a simplicial trivial
cofibration, the induced map $Z\times_BE\rightarrow
Z\times_{L_fB}\widetilde{E}$ is an $f$-local weak equivalence.
\end{corollary}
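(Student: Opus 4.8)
The plan is to deduce the corollary directly from \prettyref{thm:localization} by showing that the hypothesis --- that $Z\times_BE\to Z\times_{L_fB}\widetilde{E}$ is an $f$-local weak equivalence for every $Z\to B$ --- implies the fibrewise criterion of the theorem, namely that $p^{-1}(\sigma)\to(j\circ p)^{-1}(j\circ\sigma)$ is an $f$-local weak equivalence for each simplex $\sigma:\Delta^n\times U\to B$. First I would reduce to the case where $p$ and $j$ are simplicial fibrations exactly as in the proof of \prettyref{thm:localization}: factor $j$ as a trivial cofibration followed by a fibration, then factor $E\to B\to\widetilde B$ into a trivial cofibration followed by a fibration; since these factorizations are taken in the simplicial model structure, being universally $f$-local is unaffected, and moreover the hypothesis is stable under such replacement (the relevant pullbacks $Z\times_BE$ change only by a simplicial --- hence $f$-local --- weak equivalence, using properness of the simplicial model structure). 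After this reduction $\widetilde p:\widetilde E\to L_fB$ is a simplicial fibration over an $f$-local fibrant base, hence universally $f$-local by \prettyref{cor:cor6}.

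Next I would specialize the hypothesis to $Z=\Delta^n\times U$ mapping to $B$ via a simplex $\sigma$. Then $Z\times_BE=p^{-1}(\sigma)$, while $Z\times_{L_fB}\widetilde E$ is the pullback of $\widetilde p$ along $j\circ\sigma:\Delta^n\times U\to L_fB$. Because $\widetilde p$ is universally $f$-local, \prettyref{lem:lem3} shows this latter pullback square is an $f$-local homotopy pullback, so $Z\times_{L_fB}\widetilde E$ is an $f$-local model for the local homotopy fibre of $\widetilde p$ over $j\circ\sigma$. On the other hand $(j\circ p)^{-1}(j\circ\sigma)$ is, up to $f$-local weak equivalence, the same object: since $i:E\to\widetilde E$ is an $f$-local weak equivalence and $\widetilde p$ is an $f$-local fibration, the strict fibre $(j\circ p)^{-1}(j\circ\sigma)$ of the composite $\widetilde p\circ i$ computed after replacing by a fibration agrees with $Z\times_{L_fB}\widetilde E$; more directly, $(j\circ p)^{-1}(j\circ\sigma)=(\Delta^n\times U)\times_B\widetilde E$ via the factorization $j\circ p=\widetilde p\circ i$, and $(\Delta^n\times U)\times_B\widetilde E=Z\times_{L_fB}\widetilde E$ since the map $B\to L_fB$ is (a replacement of) $j$. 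Chasing through, the hypothesis says precisely that the canonical map $p^{-1}(\sigma)\to(j\circ p)^{-1}(j\circ\sigma)$ is an $f$-local weak equivalence, which is the criterion of \prettyref{thm:localization}.

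The main obstacle I anticipate is bookkeeping the identifications between the various pullbacks and fibres after the fibrant/cofibrant replacements --- in particular checking carefully that $(j\circ p)^{-1}(j\circ\sigma)$, as defined in the statement via the outer square, really does coincide (up to the relevant $f$-local weak equivalences) with $Z\times_{L_fB}\widetilde E$ for $Z=\Delta^n\times U$, and that passing from $(p,j)$ to $(\widetilde p,\widetilde j)$ does not disturb the hypothesis. This is where one must invoke properness of both the simplicial and the $f$-local model structures together with the homotopy pullback lemma \cite[Lemma II.8.22]{goerss:jardine:1999:simplicial}, much as in the diagram {\bf I}--{\bf III} appearing in the proof of \prettyref{thm:localization}. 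Once these compatibilities are in place, the implication is immediate from the theorem, and so $p$ is universally $f$-local.
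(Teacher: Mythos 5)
Your route through \prettyref{thm:localization} is genuinely different from the paper's argument, but as sketched it has a real gap, concentrated exactly at the point you flag as ``bookkeeping''. First, the central identification in your second paragraph is incorrect as written: $(j\circ p)^{-1}(j\circ\sigma)$ is the strict pullback of $j\circ p\colon E\to L_fB$ along $j\circ\sigma$, i.e.\ $(\Delta^n\times U)\times_{L_fB}E$; it is not $(\Delta^n\times U)\times_B\widetilde{E}$ (which does not parse, since $\widetilde{E}$ does not map to $B$), nor is it equal to $Z\times_{L_fB}\widetilde{E}$. Comparing it with $Z\times_{L_fB}\widetilde{E}$ means pulling back the simplicial equivalence $i$ along $j\circ\sigma$, and that is only a weak equivalence once $j\circ p$ is itself a fibration, i.e.\ only after the reduction making both $p$ and $j$ fibrations. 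Second, and more seriously, that reduction replaces $B$ by $\widetilde{B}$, and the criterion of \prettyref{thm:localization} for the replaced fibration quantifies over simplices $\Delta^n\times U\to\widetilde{B}$, which in general do not factor through $B$. The hypothesis of the corollary only concerns maps $Z\to B$ and the strict pullbacks $Z\times_BE$ of the original $p$ (which need not be a fibration), so it does not directly feed into that criterion. Your remark that the relevant pullbacks change only by a simplicial weak equivalence does not address this: the comparison object $(\Delta^n\times U)\times_{\widetilde{B}}B\to\Delta^n\times U$ need not be a weak equivalence (neither $\sigma$ nor $B\to\widetilde{B}$ is a fibration), and for a non-fibration $p$ the strict pullbacks $Z\times_BE$ are not simplicially equivalent to those of a fibration replacement. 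Bridging the hypothesis and the theorem's fibrewise criterion is precisely the missing content.

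The paper avoids all of this by arguing directly, with no fibration replacement of $p$ or $j$ and no appeal to \prettyref{thm:localization}: for any $Z\to B$, paste the squares $Z\times_BE\to E\to\widetilde{E}$ over $Z\to B\to L_fB$. Since $\widetilde{p}$ is a simplicial fibration over the $f$-local fibrant object $L_fB$, it is universally $f$-local by \prettyref{cor:cor6}, so both the square $(E,\widetilde{E},B,L_fB)$ and the strict pullback square of $\widetilde{p}$ along $Z\to L_fB$ are $f$-local homotopy pullbacks (via \prettyref{lem:lem3} and \prettyref{lem:lem4}). The homotopy pullback pasting lemma then shows that the square $(Z\times_BE,E,Z,B)$ is an $f$-local homotopy pullback if and only if $Z\times_BE\to Z\times_{L_fB}\widetilde{E}$ is an $f$-local weak equivalence, which is exactly the hypothesis; applying this to simplices $\sigma\colon\Delta^n\times U\to B$ gives universal $f$-locality of $p$. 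If you want to rescue your route, you would in effect have to prove this direct comparison anyway, so the detour through \prettyref{thm:localization} buys nothing.
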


\begin{proof}
We complete the diagram in the statement: 
\begin{center}
  \begin{minipage}[c]{10cm}
    \xymatrix{
      Z\times_{L_fB}\widetilde{E} \ar[r] \ar[d] & E \ar[r]^i \ar[d]_p &
      \widetilde{E} \ar[d]^{\widetilde{p}}\\  
      Z \ar[r] & B\ar[r]_j & L_fB.
    }
  \end{minipage}
\end{center}
Both the  right and the outer square are $f$-local homotopy pullbacks
by \prettyref{cor:cor6}. 
Consider now the following diagram
\begin{center}
  \begin{minipage}[c]{10cm}
    \xymatrix{
      Z\times_{B}E \ar[r] \ar[d] & E \ar[r]^i \ar[d]_p &
      \widetilde{E} \ar[d]^{\widetilde{p}}\\  
      Z \ar[r] & B\ar[r]_j & L_fB.
    }
  \end{minipage}
\end{center}
Then $p$ is universally $f$-local if and only if (in every possible
such situation) the left square of this diagram is an $f$-local
homotopy pullback. But since the right square is an $f$-local homotopy
pullback, this left square is an $f$-local homotopy pullback if and
only if the outer square is. But because of the previous diagram, the
outer square is an $f$-local homotopy pullback if and only if the
induced map $Z\times_BE\rightarrow Z\times_{L_fB}\widetilde{E}$ is an
$f$-local weak equivalence.
\end{proof}

\section{Fibrewise localization}
\label{sec:fibwise}

In this section, we recall several possible definitions of fibrewise 
localization in categories of simplicial sheaves. For a discussion of 
fibrewise localization in the category of topological spaces resp. simplicial 
sets cf. \cite[Section 1.F]{farjoun:1996:cellular} resp. 
\cite[Chapter 6]{hirschhorn:2003:modelcats}. For simplicial sets, one can
define fibrewise localization as follows:

\begin{definition}
Let $L$ be a localization functor on $\simplicial{\set}$. Then $L$ admits a 
fibrewise version if for any fibration  $p:E\rightarrow B$ of 
simplicial sets there exists a commutative diagram
\begin{center}
  \begin{minipage}[c]{10cm}
    \xymatrix{
      E \ar[rr]^{\simeq_f} \ar[rd]_p && \overline{E} \ar[ld]^{\overline{p}}\\
      &B
    }
  \end{minipage}
\end{center}
where $\overline{p}$ is a fibration and $E\rightarrow \overline{E}$ an 
$f$-local weak equivalence, such that for each simplex 
$\sigma:\Delta^n\rightarrow B$ the induced morphism 
$p^{-1}(\sigma)\rightarrow \overline{p}^{-1}(\sigma)$ is (simplicially 
equivalent to) the localization morphism 
$p^{-1}(\sigma)\rightarrow L(p^{-1}(\sigma))$.
\end{definition} 

\begin{remark}
We want to note that pointed and unpointed simplicial sets behave rather
differently with respect to fibrewise localization. For unpointed
simplicial sets, one 
can construct fibrewise localizations in various different ways, whereas for
pointed simplicial sets, one always has to make special connectivity
assumptions on the base resp. the fibre because usually there is no
continuous choice of base point in a nontrivial fibre sequence
$F\rightarrow E\stackrel{p}{\longrightarrow} B$. 
This difference between the unpointed and the pointed setting is
also illustrated by  \cite[Proposition
  6.1.4]{hirschhorn:2003:modelcats}. See also the discussion in \cite[Remark
  1.A.7]{farjoun:1996:cellular}.
\end{remark}

The right translation of this to a simplicial sheaf setting is not exactly 
immediate: the above definition hinges on the fact that $B$ is the homotopy
colimit of its simplices, hence a homotopy colimit of contractible spaces. 
This is no longer true in the simplicial sheaf setting, where $B$ is the
homotopy colimit of simplices $\Delta^n\times U$ but $U$ is typically not
contractible. In the following, we review possible definitions and extensions
of fibrewise localization to simplicial sheaves. 

\subsection{Fibrewise localization after Chataur and Scherer}

Chataur and Scherer have provided a version of fibrewise localization for 
general pointed model categories satisfying some axioms, cf. 
\cite[Theorem 4.3]{chataur:scherer:2003:fibrenull}. 

\begin{theorem}
Let \category{M} be a model category which is pointed, left proper,
cellular and in which the cube axiom and the ladder axiom holds. 
Let $L_f:\category{M}\rightarrow\category{M}$ be a localization
functor which preserves products, and let $p:F\rightarrow E\rightarrow
B$ be a fibre sequence in $\category{M}$.  
Then there exists a fibrewise $f$-localization of $p$. 
\end{theorem}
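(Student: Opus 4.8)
The plan is to imitate the simplicial-set construction of fibrewise localization, replacing the decomposition of $B$ as the homotopy colimit of its (contractible) simplices by the bar-construction decomposition of a fibre sequence, which is available in any model category satisfying the cube and ladder axioms, and using the product-preservation of $L_f$ to carry the structural data of the fibre sequence over to the localized fibre.

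First I would reduce to the case where $p$ is a fibration with point-set fibre $F$; a fibre sequence furnishes this after factoring, and by left properness none of the homotopy pullbacks below are affected by replacing $p$ up to weak equivalence (this step has to be done with some care so that the final fibrewise identification survives the replacement). Since $F\to E\to B$ is a fibre sequence, the loop object $\Omega B$ acts (up to coherent homotopy) on $F$, and $E$ is weakly equivalent over $B$ to the homotopy orbit object
\[
\hocolim_{\Delta^{op}}\bigl([n]\mapsto (\Omega B)^{\times n}\times F\bigr),
\]
with $p$ corresponding to the map induced by $F\to\pt$, whose target is $\hocolim_{\Delta^{op}}\bigl([n]\mapsto(\Omega B)^{\times n}\bigr)\simeq B$. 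The essential point making this work — that each naturality square of this simplicial diagram is a homotopy pullback, so that by the cube axiom the homotopy colimit square is again a homotopy pullback and therefore $\hofib(p)\simeq F$ — is exactly where the cube axiom is used; the ladder axiom guarantees that this homotopy-pullback property survives passage to the homotopy colimit of the whole (infinite) simplicial ladder. For a disconnected $B$ one runs the argument over the components, using the basepoint.

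Now for the localization itself: applying $L_f$ to the action $\Omega B\times F\to F$ and using $L_f(\Omega B\times F)\simeq L_f\Omega B\times L_f F$, then precomposing with the coaugmentation on the first factor, produces an action of $\Omega B$ on $L_f F$ for which $F\to L_f F$ is equivariant — this is the key use of product-preservation. I would then \emph{define}
\[
\overline{E}:=\hocolim_{\Delta^{op}}\bigl([n]\mapsto(\Omega B)^{\times n}\times L_f F\bigr),
\]
with $\overline{p}:\overline{E}\to B$ induced by $L_f F\to\pt$ and $E\to\overline{E}$ induced levelwise by $\id_{(\Omega B)^{\times n}}\times(F\to L_f F)$. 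Each of these levelwise maps is an $f$-equivalence — a product of any object with an $f$-equivalence is an $f$-equivalence, again by product-preservation of $L_f$ — and $f$-equivalences are closed under homotopy colimits in a left proper cellular model category, so $E\to\overline{E}$ is an $f$-equivalence over $B$. Running the cube/ladder argument of the previous paragraph with $L_f F$ in place of $F$ identifies $\hofib(\overline{p})$ with the $f$-local object $L_f F$, and naturality of the whole construction in the point of $B$ shows that the induced comparison between the fibre of $p$ and the fibre of $\overline{p}$ is (equivalent to) the coaugmentation $F\to L_f F$, i.e. the localization map. This supplies all the data required of a fibrewise $f$-localization.

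The main obstacle is the identification of the homotopy fibre of $\overline{p}$, i.e. controlling the interaction between the homotopy colimit over $\Delta^{op}$ and the homotopy pullback along $\pt\to B$: this is precisely what the cube axiom (a model-categorical form of Mather's cube theorem) and the ladder axiom are designed to supply, but checking that their hypotheses really apply to the infinite ladders of squares coming from the bar constructions — and that the coherences of the $\Omega B$-action on $L_f F$ are enough to make the relevant simplicial objects well defined — is the technical heart of the argument. Minor additional bookkeeping is needed for the disconnected case and to ensure the fibration-replacement of the first step does not disturb the fibrewise identification over individual points.
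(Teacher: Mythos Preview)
The paper does not itself prove this theorem; it is quoted from Chataur--Scherer \cite{chataur:scherer:2003:fibrenull}. The actual Chataur--Scherer construction is sketched later in the paper, inside the proof of \prettyref{lem:fibcomp}(i): one forms the pushout $E\cup_F L_fF$ over $B$, replaces the resulting map by a fibration $p_1:E_1\to B$ with new fibre $F_1$, and iterates this as a transfinite telescope. The cube axiom is used at each successor stage to control how the fibre changes under the pushout, and the ladder axiom guarantees that the telescope converges to something whose fibre is genuinely $f$-local. Product-preservation enters when identifying the effect of the pushout on the fibre.

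Your route is genuinely different: rather than iterate, you want to resolve $E$ once and for all as the bar construction $\hocolim_{\Delta^{op}}(\Omega B)^{\times n}\times F$ and then substitute $L_fF$ for $F$. This is clean when it applies, and product-preservation is indeed exactly what is needed to transport the $\Omega B$-action to $L_fF$. But the obstacle you yourself flag---rectifying the coherent $\Omega B$-action on $F$ into an honest simplicial object in $\category{M}$---is not a bookkeeping detail: in a bare pointed cellular model category there is no evident rectification machinery, and the bar identification $E\simeq B(\ast,\Omega B,F)$ over $B$ already presupposes that $B$ is recovered as $B\Omega B$, which fails for non-connected $B$. Your fix (``run over components'') presumes a decomposition into components that need not exist in this generality. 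The iterative pushout construction sidesteps both issues: it works directly with the given fibration $p$, never needs a strict monoid action on the fibre, and never needs $B$ to be a delooping. That is what the Chataur--Scherer approach buys, and why it is the one suited to the hypotheses of the theorem as stated.
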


We note that localization functors of simplicial sheaves commute
with finite products as remarked in the proof of \cite[Lemma
  2.2.32]{morel:voevodsky:1999:a1}, and that cube and ladder axiom 
for categories of simplicial sheaves are consequences of homotopy 
distributivity, cf. \prettyref{sec:classify}. Therefore, the fibrewise
localization method of Chataur and Scherer works in model categories
of simplicial sheaves. 
A result similar to the above can be formulated for fibre sequences
over simply-connected base spaces, replacing the product condition on
$L_f$ by the join axiom, cf. \cite{chataur:scherer:2003:fibrenull}.

Note that the construction of Chataur and Scherer only localizes fibres over
the base point. It is therefore cannot deal with the full generality
of simplicial sheaves. 

\subsection{Fibrewise localization via classifying spaces}
For a locally trivial morphism
$f:E\rightarrow B$ of topological spaces with fibre $F$, one can explain 
quite easily how to construct the fibrewise localization: Take a 
trivialization of $f$, i.e. a covering $U_i$ of $X$ over which 
$f|_{U_i}:E\times_B U_i\cong U_i\times F\rightarrow U_i$. Then apply the 
simplicial coaugmented functor: On the level of the trivialization one 
simply replaces the space $F$ by the space $LF$. On the level of transition
morphisms, one applies the functor $L$ to the transition map. For this
to work we need the functor $L$ to be continuous. This produces an
explicit recipe to construct an $LF$-bundle over $B$. 

The same argument can be applied to locally trivial morphisms of simplicial
sheaves on a site $T$, in the sense of \cite[Definition 3.5]{classify}. One 
can then do the above argument, or use the existence of classifying spaces
for locally trivial morphisms as in \cite{classify}: if the fibre sequence 
$F\rightarrow E\rightarrow B$ is locally
trivial, it is classified by a morphism $B\rightarrow \classify{F}$. 
Composing with the morphism of classifying space induced from the
coaugmentation, we obtain a morphism $B\rightarrow 
\classify{F}\rightarrow \classify{LF}$. Pulling back the universal $LF$-fibre
sequence along this morphism produces an $LF$-fibre sequence $LF\rightarrow
\overline{E}\rightarrow B$ over $B$, which is the fibrewise localization of
the fibre sequence we started with. This implies that in the above
situation any locally trivial $F$-fibre sequence of simplicial sheaves
$F\rightarrow E\rightarrow B$ can be mapped via a homotopy commutative
diagram  
\begin{center}
  \begin{minipage}[c]{10cm}
    \xymatrix{
      F \ar[r] \ar[d] & E \ar[r] \ar[d] & B \ar[d] \\
      LF \ar[r] & \overline{E} \ar[r] & B
    }
  \end{minipage}
\end{center}
to a fibre sequence over $B$, i.e. a fibrewise localization
exists. 

\subsection{Fibrewise localization via homotopy colimit decomposition}
The fibrewise localization for simplicial sets can be defined conveniently
using the homotopy colimit decomposition, which can be viewed as a 
reformulation of the above method for locally trivial morphisms. 
One writes the fibration $p:E\rightarrow B$ as the map of homotopy colimits 
$\hocolim p^{-1}(\sigma)\rightarrow \hocolim\sigma$ over the simplices of the
base and defines the fibrewise localization to be the map of homotopy colimits
$\hocolim L_f(p^{-1}(\sigma))\rightarrow\hocolim\sigma$, cf.
\cite[1.F.3]{farjoun:1996:cellular}.

In the simplicial sheaf setting - because the simplices 
$\sigma:\Delta^n\times U\rightarrow B$ are not contractible - we can not simply
apply the localization functor. We need to discuss in a little more detail 
 what the localization of the fibre should be. We propose the following 
definition which, however, only works if the localized model structure is 
proper. 
Consider the fibre $p^{-1}(\sigma)$ over the simplex 
$\sigma:\Delta^n\times U\rightarrow B$. We apply the localization to this 
morphism and obtain $L_f(p^{-1}(\sigma))\rightarrow L_f(\Delta^n\times U)$.
Now we have localized the fibre, but the base simplex and therefore the whole 
diagram has changed - the homotopy colimit is not necessarily $B$ any more. 
Therefore, we factor $L_f(p^{-1}(\sigma))\rightarrow L_f(\Delta^n\times U)$ as 
a trivial cofibration $L_f(p^{-1}(\sigma))\rightarrow \widetilde{F}$ and a 
fibration $\widetilde{F}\rightarrow L_f(\Delta^n\times U)$. The pullback 
$(\Delta^n\times U)\times_{L_f(\Delta^n\times U)}\widetilde{F}$ is then the best 
approximation to the localization of $p^{-1}(\sigma)$ which still maps to
the (non-local) simplex $\Delta^n\times U$. 
Properness of the local model structure is needed to make sure that the 
morphism 
$$
p^{-1}(\sigma)\rightarrow (\Delta^n\times U)\times_{L_f(\Delta^n\times U)}
\widetilde{F}
$$ 
is an $f$-local weak equivalence. Of course, for a clean definition we need 
to replace statements and arguments involving ``the fibre of 
$p:E\rightarrow B$'' by corresponding statements about ``the diagram of the 
fibres''.

\begin{definition}
\label{def:fibloc1}
Let $T$ be a site, and let $p:E\rightarrow B$ be a morphism of
simplicial sheaves on $T$. We consider the category of
$(\mathbf{\Delta}\times T)\downarrow B$-diagrams in  
$\simplicial{\topos{T}}$ equipped with the model structure which has
the fibrations and weak equivalences from the $f$-local model
structure, and cofibrations defined by left lifting property. 

From the morphism $p:E\rightarrow B$ we obtain a morphism of diagrams
$\mathcal{F}\rightarrow \operatorname{id}$, where 
$$
\mathcal{F}:\sigma\mapsto p^{-1}(\sigma),\qquad
\operatorname{id}:(\sigma:\Delta^n\times U\rightarrow B)\mapsto
\Delta^n\times U.
$$
We then consider the following diagram
\begin{center}
  \begin{minipage}[c]{10cm}
    \xymatrix{
      \mathcal{F} \ar[r] \ar[d]_p & \widetilde{\mathcal{F}}
      \ar[d]^{\widetilde{p}}\\ 
      \operatorname{id} \ar[r]_j & L_f(\operatorname{id}),
    }
  \end{minipage}
\end{center}
where $j$ is a fibrant replacement in the model structure on the
diagram category and $\widetilde{\mathcal{F}}$ is obtained from a
factorization of $j\circ p$ as trivial cofibration
$\mathcal{F}\rightarrow\widetilde{\mathcal{F}}$ followed by a
fibration $\widetilde{p}$. 
The diagram of the ``$f$-localized homotopy fibres'' is then obtained by
the pullback
$\operatorname{id}\times_{L_f(\operatorname{id})}\widetilde{\mathcal{F}}$. This
is a functorial way for assigning to each simplex
$\sigma:\Delta^n\times U\rightarrow B$ the pullback $(\Delta^n\times
U)\times_{L_f(\Delta^n\times U)} L_f(p^{-1}(\sigma))$. 

The fibrewise localization $\overline{p}:\overline{E}\rightarrow B$ is
then defined to be the homotopy colimit of the diagram 
$\operatorname{id}\times_{L_f(\operatorname{id})}\widetilde{\mathcal{F}}$. 
\end{definition}

\begin{remark}
\begin{enumerate}[(i)]
\item
In the special case of simplicial sets, where the base can be
decomposed into (contractible) simplices, this definition reduces to
the usual fibrewise localization. In the simplicial sheaf setting,
where the base can not be decomposed into contractible pieces, we use
the homotopy fibre definition over the simplices, and the homotopy
colimit decomposition to define the fibrewise localization. This
construction somehow sits inbetween the ``classical'' fibrewise
localization and the computation of the homotopy fibre - over
representable objects we have the homotopy fibre, anything more global
than representable objects behaves like fibrewise localization. 
\item Note finally that if $\operatorname{pt}$ denotes the terminal object of
  $\simplicial{\topos{T}}$, then the induced morphism 
  $p^{-1}(\sigma)\rightarrow
  \overline{p}^{-1}(\sigma)$ is the $f$-localization for any simplex
  $\sigma:\Delta^n\times\operatorname{pt}\rightarrow B$. 
\end{enumerate}
\end{remark}

This definition of fibrewise localization has the right properties: it
is an $f$-local weak equivalence on the total spaces $E\rightarrow
\overline{E}$, and on the local fibres it is exactly the ``canonical''
morphism from point-set fibre to ``$f$-localized homotopy fibre rel
base simplex''.

\begin{lemma}
\label{lem:fibwise1}
Let $T$ be a site, let $f:X\rightarrow Y$ be a morphism of simplicial sheaves
such that the $f$-localized model structure is proper. Let $p:E\rightarrow B$
be a fibration of simplicial sheaves. Then the morphism 
$E\rightarrow \overline{E}$ from \prettyref{def:fibloc1} is an $f$-local
weak equivalence and for each simplex $\sigma:\Delta^n\times U\rightarrow B$,
the following is an $f$-local homotopy pullback:
\begin{center}
  \begin{minipage}[c]{10cm}
    \xymatrix{
      \overline{p}^{-1}(\sigma)\ar[r] \ar[d] &
      L_f(\overline{p}^{-1}(\sigma))\ar[d]\\
      \Delta^n\times U \ar[r] & L_f(\Delta^n\times U).
    }
  \end{minipage}
\end{center}
\end{lemma}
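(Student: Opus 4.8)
The plan is to verify the two assertions separately, in both cases reducing to statements about the diagram of fibres $\mathcal{F} \to \operatorname{id}$ and its localized counterpart $\operatorname{id}\times_{L_f(\operatorname{id})}\widetilde{\mathcal{F}}$ introduced in \prettyref{def:fibloc1}. For the first assertion, that $E \to \overline{E}$ is an $f$-local weak equivalence, I would argue termwise on the diagram category. Since $p$ is a fibration, each $p^{-1}(\sigma) = (\Delta^n\times U)\times_B E$ is a point-set pullback along a fibration; the factorization $\mathcal{F} \to \widetilde{\mathcal{F}} \to L_f(\operatorname{id})$ and the fibrant replacement $j:\operatorname{id}\to L_f(\operatorname{id})$ are, by construction, built from the $f$-local model structure on diagrams. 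The key point is properness: since the $f$-local model structure on $\simplicial{\topos{T}}$ is proper (hence so is the projective model structure on the diagram category, as fibrations and weak equivalences are detected objectwise), the square defining $(\Delta^n\times U)\times_{L_f(\Delta^n\times U)}\widetilde{F}$ as a pullback along the fibration $\widetilde{p}(\sigma)$ is an $f$-local homotopy pullback, and the comparison map $p^{-1}(\sigma) \to (\Delta^n\times U)\times_{L_f(\Delta^n\times U)}\widetilde{F}$ is an $f$-local weak equivalence for each $\sigma$ — this is exactly the right-properness argument sketched in the text preceding the definition. Thus $\mathcal{F} \to \operatorname{id}\times_{L_f(\operatorname{id})}\widetilde{\mathcal{F}}$ is an objectwise $f$-local weak equivalence of diagrams, so the induced map on homotopy colimits $E \simeq \hocolim\mathcal{F} \to \hocolim(\operatorname{id}\times_{L_f(\operatorname{id})}\widetilde{\mathcal{F}}) = \overline{E}$ is an $f$-local weak equivalence, where the first equivalence is \prettyref{lem:decompfib}.

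For the second assertion — that for each $\sigma:\Delta^n\times U\to B$ the square relating $\overline{p}^{-1}(\sigma)$, $L_f(\overline{p}^{-1}(\sigma))$, $\Delta^n\times U$ and $L_f(\Delta^n\times U)$ is an $f$-local homotopy pullback — I would first identify $\overline{p}^{-1}(\sigma)$ up to $f$-local weak equivalence. Here I need to know that forming fibres over the fixed simplex $\sigma$ commutes, up to weak equivalence, with the homotopy colimit defining $\overline{E}$. This follows from homotopy distributivity: pulling back the homotopy colimit diagram $\operatorname{id}\times_{L_f(\operatorname{id})}\widetilde{\mathcal{F}}$ along $\sigma:\Delta^n\times U\to B = \hocolim\operatorname{id}$ yields, again by (HD i)/(HD ii) and the fact that $\Delta^n\times U \to B$ is (weakly equivalent to) one of the structure maps of the colimit diagram, the identification $\overline{p}^{-1}(\sigma) \simeq (\Delta^n\times U)\times_{L_f(\Delta^n\times U)}\widetilde{F}$, i.e. the localized homotopy fibre relative to the base simplex. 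Once this identification is in place, the square in the statement becomes, up to $f$-local weak equivalence, precisely the pullback square that defines $(\Delta^n\times U)\times_{L_f(\Delta^n\times U)}\widetilde{F}$ — namely the pullback of the fibration $\widetilde{p}(\sigma):\widetilde{F}\to L_f(\Delta^n\times U)$ along $\Delta^n\times U \to L_f(\Delta^n\times U)$ — together with the observation that $L_f(\overline{p}^{-1}(\sigma)) \simeq L_f(p^{-1}(\sigma)) \simeq \widetilde{F}$, the latter because $\widetilde{F}$ is an $f$-local fibrant object $f$-locally equivalent to $p^{-1}(\sigma)$. Since this square is a pullback along an $f$-local fibration, properness makes it an $f$-local homotopy pullback, which is what we want.

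The main obstacle I anticipate is the second step: making rigorous the claim that taking the point-set fibre over a \emph{fixed} simplex $\sigma$ of the homotopy colimit $\overline{E} = \hocolim(\operatorname{id}\times_{L_f(\operatorname{id})}\widetilde{\mathcal{F}})$ recovers the term of the diagram indexed by $\sigma$, up to $f$-local weak equivalence. The subtlety is that $\sigma$ ranges over simplices $\Delta^n\times U$ which are not contractible, so one cannot simply invoke the simplicial-set picture; instead one must invoke homotopy distributivity carefully, checking that the relevant comparison square (structure map of the colimit diagram at $\sigma$ versus the colimit) is an $f$-local homotopy pullback — and here it matters that $\overline{p}$ has been arranged to be a fibration, or at least universally $f$-local, so that the fibre $\overline{p}^{-1}(\sigma)$ has the expected homotopy type. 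I would handle this by the same device used in the proof of \prettyref{lem:decompfib} and \prettyref{cor:2jardine}: reduce to the presheaf level where colimits are computed objectwise, then to points of the topos when these exist, where the statement becomes the classical compatibility of fibrewise localization with the homotopy colimit decomposition of simplicial sets as in \cite[1.F.3]{farjoun:1996:cellular}, and finally transport back via sheafification. Everything else — the properness-driven homotopy-pullback identifications and the $2$-out-of-$3$ bookkeeping — is routine given the machinery already assembled in \prettyref{sec:fibseq}.
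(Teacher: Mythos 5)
Your proposal is correct and follows essentially the same route as the paper: identify $\overline{p}^{-1}(\sigma)$ with the diagram term $(\Delta^n\times U)\times_{L_f(\Delta^n\times U)}\widetilde{F}$ via the homotopy colimit decomposition (\prettyref{lem:decompfib}), use right properness of the $f$-local structure to see that this pullback along the fibration $\widetilde{F}\rightarrow L_f(\Delta^n\times U)$ is an $f$-local homotopy pullback with $\widetilde{F}$ playing the role of $L_f(\overline{p}^{-1}(\sigma))$, and get $E\rightarrow\overline{E}$ as a homotopy colimit of objectwise $f$-local weak equivalences (via $2$-out-of-$3$). The only differences are cosmetic ordering, and the fibre-of-homotopy-colimit identification you flag as the main subtlety is precisely the point the paper itself disposes of by citing \prettyref{lem:decompfib}.
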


\begin{proof}
The fact that the diagrams are $f$-local pullbacks for each simplex of the 
base is a consequence of the definition and properness of the local 
model structure: 
by \prettyref{lem:decompfib}, the fibre $\overline{p}^{-1}(\sigma)$ is the 
space $(\Delta^n\times U)\times_{L_f(\Delta^n\times U)} L_f(p^{-1}(\sigma))$ which is 
defined as the pullback
\begin{center}
  \begin{minipage}[c]{10cm}
    \xymatrix{
      \overline{p}^{-1}(\sigma)\ar[r] \ar[d] &
      \widetilde{F}\ar[d]&  L_f(p^{-1}(\sigma))\ar[l]_{\simeq_f}\\
      \Delta^n\times U \ar[r] & L_f(\Delta^n\times U).
    }
  \end{minipage}
\end{center}
In particular, by properness of the local model structure 
$\overline{p}^{-1}(\sigma) \rightarrow \widetilde{F}$ is an $f$-local weak 
equivalence and $\widetilde{F}$ is $f$-local by definition. The above
pullback is obviously an $f$-local homotopy pullback, and it is  
simplicially equivalent to the one claimed in the diagram. 

Now consider the following commutative diagram
\begin{center}
  \begin{minipage}[c]{10cm}
    \xymatrix{
      p^{-1}(\sigma) \ar[r]\ar[d]_{\simeq_f} & 
      \overline{p}^{-1}(\sigma)\ar[d]^{\simeq_f} \\ 
      L_f(p^{-1}(\sigma)) \ar[r]_{\simeq_f} & \widetilde{F}
    }
  \end{minipage}
\end{center}
arising from the definition of $\overline{p}^{-1}(\sigma)$. The left and bottom
morphism are $f$-local weak equivalences by construction. We saw above
that the right morphism is also an $f$-local weak equivalence by properness. 
Therefore, the top morphism is an $f$-local weak equivalence. The morphism
$E\rightarrow\overline{E}$ is the homotopy colimit of the morphisms
$p^{-1}(\sigma)\rightarrow \overline{p}^{-1}(\sigma)$, and is therefore an
$f$-local weak equivalence.
\end{proof}

The properties established by the lemma above could be seen as an adequate 
definition of fibrewise localization in the simplicial sheaf setting - an 
$f$-local equivalence on the total space and a suitable localization morphism
on the fibres. The lemma also allows us to formulate what it means for a map
to have ``$f$-local fibres''.

\begin{definition}
\label{def:flocfib}
Under the conditions of \prettyref{lem:fibwise1}, a morphism 
$p:E\rightarrow B$ is said to have \emph{$f$-local fibres} if one of the 
following equivalent definitions is satisfied:
\begin{enumerate}[(i)]
\item The morphism $E\rightarrow \overline{E}$ is a simplicial weak 
equivalence.
\item For any simplex $\sigma:\Delta^n\times U\rightarrow B$ the following
induced commutative diagram is a simplicial homotopy pullback:
\begin{center}
  \begin{minipage}[c]{10cm}
    \xymatrix{
      p^{-1}(\sigma)\ar[r] \ar[d] &
      L_f(p^{-1}(\sigma))\ar[d]\\
      \Delta^n\times U \ar[r] & L_f(\Delta^n\times U).
    }
  \end{minipage}
\end{center}
\end{enumerate}
\end{definition}

\begin{lemma}
\label{lem:oldlem45}
Assume the conditions of \prettyref{lem:fibwise1}, let $p:E\rightarrow B$ 
be a morphism in $\simplicial{\topos{T}}$ and let 
$\overline{p}:\overline{E}\rightarrow B$ be its fibrewise localization. 
Then $p$ is universally $f$-local if and only if $\overline{p}$ is.
\end{lemma}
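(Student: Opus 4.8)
The plan is to combine \prettyref{lem:fibwise1} with \prettyref{def:univfloc} and the invariance of homotopy pullbacks under weak equivalences of squares: over each simplex of $B$ the defining square of $p$ maps to that of $\overline{p}$ by a componentwise $f$-local weak equivalence, so one square is an $f$-local homotopy pullback exactly when the other is.

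In more detail, recall that under the standing hypotheses (those of \prettyref{lem:fibwise1}) the morphism $p:E\rightarrow B$ is already a fibration, so \prettyref{lem:fibwise1} applies: the morphism $E\rightarrow\overline{E}$ is an $f$-local weak equivalence, and its proof moreover exhibits, for every representable $U\in T$ and every simplex $\sigma:\Delta^n\times U\rightarrow B$, the induced morphism $p^{-1}(\sigma)\rightarrow\overline{p}^{-1}(\sigma)$ as an $f$-local weak equivalence. Fix such a $\sigma$. Mapping the defining pullback square of $p$ over $\sigma$ from \prettyref{def:univfloc} to the corresponding square of $\overline{p}$, one obtains a commutative diagram whose four comparison maps are $p^{-1}(\sigma)\rightarrow\overline{p}^{-1}(\sigma)$, the map $E\rightarrow\overline{E}$, and the identities on $\Delta^n\times U$ and on $B$; commutativity of the ``top face'' is immediate, since by construction $p^{-1}(\sigma)\rightarrow\overline{p}^{-1}(\sigma)$ is precisely the map induced on the point-set pullbacks $(\Delta^n\times U)\times_B E\rightarrow(\Delta^n\times U)\times_B\overline{E}$ by the map $E\rightarrow\overline{E}$ over $B$. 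Thus the square of $p$ over $\sigma$ and the square of $\overline{p}$ over $\sigma$ are related by a componentwise $f$-local weak equivalence.

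Now I would invoke the fact that being an $f$-local homotopy pullback is invariant under componentwise $f$-local weak equivalences of commutative squares: one replaces the right-hand legs $E\rightarrow B$ and $\overline{E}\rightarrow B$ by $f$-local fibrations in a compatible way and uses right properness of the $f$-local model structure, the relevant bookkeeping being exactly the homotopy pullback lemma of \cite[Section II.8]{goerss:jardine:1999:simplicial} used repeatedly above. Consequently, for every $U$ and every $\sigma:\Delta^n\times U\rightarrow B$, the defining square of $p$ over $\sigma$ is an $f$-local homotopy pullback if and only if that of $\overline{p}$ over $\sigma$ is; by \prettyref{def:univfloc} this says exactly that $p$ is universally $f$-local if and only if $\overline{p}$ is. (The same conclusion can be reached ``globally'' via \prettyref{lem:lem3}: for any $g:Z\rightarrow B$, \prettyref{lem:decompfib} identifies $Z\times_B E\rightarrow Z\times_B\overline{E}$ with the homotopy colimit over $(\mathbf{\Delta}\times T)\downarrow Z$ of the $f$-local weak equivalences $p^{-1}(g\circ\tau)\rightarrow\overline{p}^{-1}(g\circ\tau)$, hence it is an $f$-local weak equivalence, and one then compares the two pullback squares along $g$.)

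The only point that takes real care — as opposed to pure bookkeeping — is the auxiliary invariance statement used in the last paragraph: one must formulate precisely what it means for a weak equivalence of squares to preserve the homotopy-pullback property and verify it from right properness of the $f$-local structure. Everything else reduces to reading off the relevant maps from \prettyref{def:fibloc1} and \prettyref{lem:fibwise1}.
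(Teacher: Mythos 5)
Your argument is correct and is essentially the paper's own proof: the paper likewise deduces from \prettyref{lem:fibwise1} that the defining pullback squares of $p$ and $\overline{p}$ over each simplex $\sigma$ are related by componentwise $f$-local weak equivalences ($p^{-1}(\sigma)\rightarrow\overline{p}^{-1}(\sigma)$ and $E\rightarrow\overline{E}$ over the identities), and concludes by the properness-based invariance of the $f$-local homotopy pullback property, which you spell out and which the paper makes explicit only later via the cube diagram in the proof of \prettyref{thm:bfthm}. Your extra observation that $p$ should be (replaced by) a fibration so that \prettyref{lem:fibwise1} applies, and the alternative global route via \prettyref{lem:lem3}, are consistent with the paper's intent.
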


\begin{proof}
It follows from \prettyref{lem:fibwise1} that the following two pullbacks 
are $f$-locally weakly equivalent:
\begin{center}
  \begin{minipage}[c]{10cm}
    \xymatrix{
      p^{-1}(\sigma) \ar[r] \ar[d] & E \ar[d]^p && 
      \overline{p}^{-1}(\sigma) \ar[r] \ar[d] & \overline{E} 
      \ar[d]^{\overline{p}} \\
      \Delta^n\times U \ar[r]_\sigma & B &&
      \Delta^n\times U \ar[r]_\sigma & B
    }
  \end{minipage}
\end{center}
This implies the claim.
\end{proof}

\subsection{Comparison results}

\begin{lemma}
\label{lem:fibcomp}
Let $T$ be a site, let $f:X\rightarrow Y$ be a morphism of simplicial sheaves
on $T$ such that the $f$-local model structure is proper, and let 
$p:E\rightarrow B$ be a fibration of simplicial sheaves. 
\begin{enumerate}[(i)]
\item
Assume there exists a base point $\operatorname{pt}\rightarrow B$. 
There exists a morphism $\overline{E}^{CS}\rightarrow\overline{E}^{HD}$ from
the Chataur-Scherer fibrewise localization to the fibrewise localization 
defined using the homotopy colimit decomposition. This morphism induces 
simplicial weak equivalences over simplices 
$\Delta^n\times\operatorname{pt}\rightarrow B$. 
\item
If $p:E\rightarrow B$ is locally trivial, there exists a morphism 
$\overline{E}^{B}\rightarrow \overline{E}^{HD}$ from the fibrewise localization
using the classifying spaceto the fibrewise localization using
the homotopy colimit decomposition. This morphism is a simplicial weak 
equivalence. 
\end{enumerate}
\end{lemma}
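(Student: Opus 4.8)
The plan is to construct both comparison morphisms by exploiting the universal properties that define the three versions of fibrewise localization, and then to check the claimed weak equivalences using the homotopy colimit decomposition (\prettyref{lem:decompfib}) together with \prettyref{cor:2jardine}. For part (ii), since $p:E\rightarrow B$ is locally trivial with fibre $F$, it is classified by a map $B\rightarrow \classify{F}$; composing with $\classify{F}\rightarrow\classify{L_fF}$ and pulling back the universal $L_fF$-fibre sequence yields $\overline{E}^B\rightarrow B$ together with a map of fibre sequences $(F\rightarrow E\rightarrow B)\rightarrow(L_fF\rightarrow\overline{E}^B\rightarrow B)$. This last map fits into the coaugmentation data, so it receives a canonical map from the diagram $\mathcal F\rightarrow\id$ used in \prettyref{def:fibloc1}; concretely, $\overline{E}^B\rightarrow B$ is again locally trivial and hence universally $f$-local by \prettyref{cor:cor6} applied locally (or directly, since it is built from an honest fibration over $B$), so over each simplex $\sigma:\Delta^n\times U\rightarrow B$ its fibre is $f$-locally equivalent to $(\Delta^n\times U)\times_{L_f(\Delta^n\times U)}L_f(p^{-1}(\sigma))$, which is exactly the fibre of $\overline{p}^{HD}$ by \prettyref{lem:fibwise1}. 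Thus the induced map on fibres over every $\sigma$ is an $f$-local weak equivalence; but both $\overline{E}^B$ and $\overline{E}^{HD}$ receive $f$-local equivalences from $E$, so the induced fibre maps are in fact simplicial weak equivalences, and \prettyref{cor:2jardine}(i) upgrades this to a simplicial weak equivalence $\overline{E}^B\rightarrow\overline{E}^{HD}$.

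For part (i), the Chataur--Scherer construction only localizes the fibre $F=p^{-1}(\operatorname{pt})$ over the chosen base point, producing a fibre sequence $L_fF\rightarrow\overline{E}^{CS}\rightarrow B$ under $E\rightarrow B$. Because $\overline{E}^{CS}\rightarrow B$ is again a fibration sitting under $E\rightarrow B$ via an $f$-local equivalence, the functoriality of the diagram-category factorization in \prettyref{def:fibloc1} (the fibrant replacement $j$ and the trivial-cofibration/fibration factorization of $j\circ p$) yields a canonical comparison $\overline{E}^{CS}\rightarrow\overline{E}^{HD}$ over $B$: one maps the diagram $\mathcal F\rightarrow\id$ into the corresponding diagram built from $\overline{E}^{CS}$ and takes homotopy colimits. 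Over a simplex $\sigma:\Delta^n\times\operatorname{pt}\rightarrow B$ factoring through the base point, $p^{-1}(\sigma)\simeq F$ and the Chataur--Scherer fibre is $\Delta^n\times L_fF$, which by \prettyref{rem:fibloc1}(ii)—i.e. the second remark after \prettyref{def:fibloc1}—agrees up to simplicial equivalence with the $\overline{p}^{HD}$-fibre $(\Delta^n\times\operatorname{pt})\times_{L_f(\Delta^n\times\operatorname{pt})}L_f(p^{-1}(\sigma))=L_f F$. Hence the comparison induces simplicial weak equivalences on fibres over all such simplices, which is the assertion.

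The main obstacle is organizational rather than computational: making the comparison morphisms genuinely \emph{canonical} and compatible with the diagram-category model structure used in \prettyref{def:fibloc1}. Specifically, one must check that the Chataur--Scherer output and the classifying-space output, which are a priori constructed by different recipes, can be promoted to morphisms of $(\mathbf\Delta\times T)\downarrow B$-diagrams receiving the canonical map from $\mathcal F\rightarrow\id$, so that passing to homotopy colimits gives a well-defined map over $B$; this uses the homotopy universality of the coaugmentation $j_A:A\rightarrow L_fA$ from \prettyref{thm:locmodel} to produce the requisite factorizations up to homotopy, and then rigidifies them via the model structure on diagrams. Once the maps are in place, the verification that they are (simplicial) weak equivalences on the relevant fibres is routine: for part (ii) it is \prettyref{cor:2jardine}, and for part (i) it is a direct comparison over base-point simplices using the explicit description of the Chataur--Scherer fibre together with the fact that $L_f$ commutes with finite products.
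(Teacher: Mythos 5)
Your plan assembles the right ingredients, but the two comparison morphisms --- which are the actual content of the lemma --- are never constructed; you defer them to ``functoriality of the diagram-category factorization'' and a subsequent ``rigidification'', and that deferral hides a real problem. For (i), applying the homotopy-colimit-decomposition construction functorially to the map $E\rightarrow\overline{E}^{CS}$ over $B$ produces a map \emph{out of} $\overline{E}^{HD}$ (into the HD-localization of $\overline{p}^{CS}$), i.e.\ in the wrong direction, and identifying that target with $\overline{E}^{HD}$ is not automatic because $E\rightarrow\overline{E}^{CS}$ need not induce $f$-local equivalences on fibres over all simplices. The paper instead uses the explicit transfinite telescope structure of the Chataur--Scherer construction: at each successor stage $E\rightarrow E_1\rightarrow B$ one checks that $E\rightarrow\overline{E}^{HD}$ factors through $E_1$, and passing to the telescope gives $\overline{E}^{CS}\rightarrow\overline{E}^{HD}$; the fibre statement over $\Delta^n\times\operatorname{pt}$ then follows because $F\rightarrow(\overline{p}^{CS})^{-1}(\sigma)$ and the composite into $(\overline{p}^{HD})^{-1}(\sigma)$ are both $f$-localizations of $F$. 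For (ii), the paper likewise avoids a direct fibrewise comparison over $B$: it applies the HD construction to the universal fibration $B(\ast,\haut(F),F)\rightarrow\classify{F}$, checks (using properness) that the output is locally trivial with fibre $L_fF$ and that the induced map $\classify{F}\rightarrow\classify{L_fF}$ is the one induced by the coaugmentation, so the two constructions agree on the universal object and hence after pullback. Without some such device you do not even have the strictly commuting triangle of fibrations over $B$ that \prettyref{cor:2jardine} requires.

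There is also a concrete logical gap in your verification for (ii): from ``the induced map on fibres is an $f$-local weak equivalence'' and ``both $\overline{E}^{B}$ and $\overline{E}^{HD}$ receive $f$-local equivalences from $E$'' you conclude the fibre maps are \emph{simplicial} weak equivalences; this is a non sequitur. An $f$-local equivalence between non-local objects need not be a simplicial equivalence, and an $f$-local equivalence of total spaces over $B$ need not even restrict to $f$-local equivalences on fibres over simplices --- that failure is precisely the subject of universally $f$-local maps. What actually forces the fibre comparison to be simplicial is that both fibres are, up to simplicial equivalence, pullbacks along $\Delta^n\times U\rightarrow L_f(\Delta^n\times U)$ of fibrations with $f$-local total space (over trivializing simplices both are $(\Delta^n\times U)\times L_fF$, using that $L_f$ preserves finite products), so that the comparison descends to a map of $f$-local objects over $L_f(\Delta^n\times U)$ and one can apply right properness of the simplicial structure; but this compatibility is exactly the structure your unconstructed comparison map would have to carry. (Minor point: the remark you cite as ``rem:fibloc1(ii)'' has no label in the paper; cite the remark following \prettyref{def:fibloc1} or \prettyref{lem:fibwise1} instead.)
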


\begin{proof}
We only sketch (i). The Chataur-Scherer fibrewise localization is constructed
as a transfinite telescope in which the successor steps are given by the 
following diagram:
\begin{center}
  \begin{minipage}[c]{10cm}
    \xymatrix{
      F \ar[r]_{\simeq_f} \ar[d] & L_f F \ar[r] \ar[d] & F_1 \ar[d] \\
      E \ar[r]_<<<<<<{\simeq_f} \ar[d]_p & E\cup_F L_fF \ar[r]_>>>>>>\simeq
      \ar[d]^q & E_1  
      \ar[d]^{p_1} \\  
      B \ar[r]_= & B \ar[r]_= & B.
    }
  \end{minipage}
\end{center}
One starts with the fibre sequence $F=p^{-1}(\sigma)\rightarrow E\rightarrow B$, 
takes the localization $F\rightarrow L_fF$ and then the pushout. The resulting 
middle column is not a fibre sequence, so we replace $q$ by a fibration $p_1$,
and let $F_1\rightarrow E_1\rightarrow B$ be the new fibre sequence. It is then
easy to see that the morphism $E\rightarrow \overline{E}^{HD}$ factors through
$E_1\rightarrow \overline{E}^{HD}$. This implies the existence of the required
morphism. For any simplex $\Delta^n\times\operatorname{pt}\rightarrow B$, we 
have an induced composition 
$$
F\rightarrow (\overline{p}^{CS})^{-1}(\sigma)\rightarrow 
(\overline{p}^{HD})^{-1}(\sigma),
$$
where the first morphism and the composition are both $f$-localizations of $F$. 
The second morphism then must be a simplicial weak equivalence as claimed.

(ii) We apply the homotopy colimit decomposition construction of the 
fibrewise localization to the universal locally trivial fibration
$B(\ast,\haut(F),F)\rightarrow \classify{F}$. Using properness, it can
be checked that the result is a locally trivial fibre sequence with fibre 
$L_fF$. The induced morphism $\classify{F}\rightarrow \classify{L_fF}$ is the 
localization morphism, because locally (over simplices where 
$p:E\rightarrow B$ is trivial) the induced morphism is the localization 
morphism. This means that both methods of fibrewise localization agree
on the universal object, so they agree.
\end{proof}

\section{Characterization of universally $f$-local maps}
\label{sec:bfthm}

\begin{lemma}
\label{lem:help}
A map $p:E\rightarrow B$ with $f$-local fibres in the sense of 
\prettyref{def:flocfib} is universally $f$-local if and only if the 
following square is a simplicial homotopy pullback: 
\begin{center}
  \begin{minipage}[c]{10cm}
    \xymatrix{
      E \ar[r] \ar[d]_p & L_fE
      \ar[d]^{L_fp}\\
      B \ar[r] & L_fB
    }
  \end{minipage}
\end{center}
\end{lemma}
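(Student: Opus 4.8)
The plan is to prove the two implications separately, and to carry out all arguments only after the usual simplicial factorizations: exactly as in the proof of \prettyref{thm:localization} one reduces to the case where $p\colon E\to B$ is a simplicial fibration, where the fibrant replacement $j\colon B\to L_fB$ is a simplicial fibration, and where $L_fp$ is represented by a simplicial fibration $\widehat{E}\to L_fB$ with $\widehat{E}\simeq L_fE$. None of these replacements affects whether $p$ is universally $f$-local or whether the displayed square is a simplicial homotopy pullback, so from now on I assume them.

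For the direction ``the displayed square is a simplicial homotopy pullback $\Rightarrow$ $p$ is universally $f$-local'' I would not use the hypothesis that $p$ has $f$-local fibres. If the square is a simplicial homotopy pullback, then the canonical map $E\to B\times^h_{L_fB}L_fE$ (the homotopy pullback) is a simplicial weak equivalence; since a simplicial fibration replacement $\widetilde{p}\colon\widetilde{E}\to L_fB$ of $j\circ p$ satisfies $\widetilde{E}\simeq_f L_fE$ over $L_fB$, right properness of the $f$-local structure gives $B\times_{L_fB}\widetilde{E}\simeq_f B\times^h_{L_fB}L_fE$, so the comparison $E\to B\times_{L_fB}\widetilde{E}$ is an $f$-local weak equivalence. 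This is an $f$-local weak equivalence between fibrations over $B$, and (again by right properness) such maps remain $f$-local weak equivalences after base change along an arbitrary $g\colon Z\to B$; hence $Z\times_BE\to Z\times_{L_fB}\widetilde{E}$ is an $f$-local weak equivalence for all $g$, and \prettyref{cor:thm8} shows that $p$ is universally $f$-local.

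For the converse, assume $p$ is universally $f$-local with $f$-local fibres. I would describe the fibrewise localization $\overline{E}$ --- which by \prettyref{def:flocfib}(i) is simplicially equivalent to $E$ --- and show it is simplicially equivalent to $B\times^h_{L_fB}L_fE$. By \prettyref{lem:fibwise1}, $\overline{E}\simeq\hocolim_\sigma\overline{p}^{-1}(\sigma)$ with $\overline{p}^{-1}(\sigma)\simeq(\Delta^n\times U)\times^h_{L_f(\Delta^n\times U)}L_f(p^{-1}(\sigma))$ over each simplex $\sigma\colon\Delta^n\times U\to B$. The crucial point is that universal $f$-locality forces $L_f$ to \emph{commute} with the homotopy pullback computing $p^{-1}(\sigma)$: by \prettyref{thm:localization} the canonical map $p^{-1}(\sigma)\to(\Delta^n\times U)\times^h_{L_fB}L_fE$ is an $f$-local weak equivalence, and since $f$-local objects are closed under homotopy pullbacks, right properness identifies $(\Delta^n\times U)\times^h_{L_fB}L_fE$ $f$-locally with the $f$-local object $L_f(\Delta^n\times U)\times^h_{L_fB}L_fE$. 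Applying $L_f$ then yields a simplicial weak equivalence $L_f(p^{-1}(\sigma))\simeq L_f(\Delta^n\times U)\times^h_{L_fB}L_fE$ over $L_f(\Delta^n\times U)$; substituting this into the formula for $\overline{p}^{-1}(\sigma)$ and pasting homotopy pullbacks along $\Delta^n\times U\to L_f(\Delta^n\times U)\to L_fB$ produces a natural simplicial weak equivalence $\overline{p}^{-1}(\sigma)\simeq(\Delta^n\times U)\times^h_{L_fB}L_fE$. Passing to homotopy colimits over the category of simplices and invoking \prettyref{lem:decompfib} (which also yields $B\times^h_{L_fB}L_fE\simeq\hocolim_\sigma\big((\Delta^n\times U)\times^h_{L_fB}L_fE\big)$ as the homotopy colimit decomposition of the fibration $B\times_{L_fB}\widehat{E}\to B$) gives $E\simeq\overline{E}\simeq B\times^h_{L_fB}L_fE$, which is precisely the assertion that the displayed square is a simplicial homotopy pullback.

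I expect the main obstacle to be exactly this commutation step: localization functors do not preserve homotopy pullbacks in general, so one must squeeze out of universal $f$-locality --- via \prettyref{thm:localization} --- precisely the statement that $L_f$ of the fibre $p^{-1}(\sigma)$ agrees with the corresponding homotopy pullback of the localized pieces, while carefully distinguishing which comparison maps are merely $f$-local weak equivalences and which are genuine simplicial ones (here one repeatedly uses right properness and the fact that an $f$-local weak equivalence between $f$-local objects is a simplicial weak equivalence). The hypothesis that $p$ has $f$-local fibres is used only once, and only in the second direction, to pass from $E$ to the fibrewise localization $\overline{E}$; everything else is bookkeeping with homotopy pullback squares and the homotopy colimit decomposition.
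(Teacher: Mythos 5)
The first implication in your write-up contains a genuine gap. Having produced the $f$-local weak equivalence $E\rightarrow B\times_{L_fB}\widetilde{E}$ over $B$, you pull it back along an arbitrary $g:Z\rightarrow B$ and claim, ``by right properness'', that it stays an $f$-local weak equivalence. That general principle is false: right properness only yields base-change invariance when the two pullback squares involved are $f$-local homotopy pullbacks, i.e.\ when the maps to $B$ are universally $f$-local; for $B\times_{L_fB}\widetilde{E}\rightarrow B$ this holds by \prettyref{cor:cor6} and \prettyref{cor:pbstab}, but for the simplicial fibration $p:E\rightarrow B$ it is exactly the statement you are trying to prove, so the argument is circular. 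Indeed, if an $f$-local weak equivalence between simplicial fibrations over $B$ survived arbitrary base change, then by \prettyref{cor:thm8} (or \prettyref{lem:lem4}) every simplicial fibration would be universally $f$-local, contradicting the example $EG\rightarrow BG$ at the end of the paper. The step can be repaired: base-change the \emph{simplicial} weak equivalence $E\rightarrow B\times_{L_fB}\widehat{E}$ (with $\widehat{E}\rightarrow L_fB$ a simplicial fibration replacement of $L_fp$), which is legitimate by properness of the simplicial structure since both sides are simplicial fibrations over $B$, and then compare $Z\times_{L_fB}\widehat{E}$ with $Z\times_{L_fB}\widetilde{E}$ over the $f$-local fibrant base $L_fB$ using \prettyref{cor:cor6}. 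The paper avoids all of this by observing that, up to simplicial weak equivalence, $p$ is a pullback of $L_fp$, so \prettyref{cor:cor6} and \prettyref{cor:pbstab} finish this direction in one line.

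For the converse your route is genuinely different from the paper's: you pass through the fibrewise localization $\overline{E}$, use \prettyref{thm:localization} and \prettyref{lem:fibwise1} to identify each $\overline{p}^{-1}(\sigma)$ with $(\Delta^n\times U)\times^h_{L_fB}L_fE$, and take homotopy colimits, whereas the paper argues directly that the canonical fibre comparison $p^{-1}(\sigma)\rightarrow (L_fp)^{-1}(\sigma)$ is first an $f$-local weak equivalence (universal $f$-locality of $p$ and of $L_fp$) and then a simplicial one, via the cube built from the $f$-local-fibres squares of $p$ and $L_fp$. Your argument is workable but heavier, and it has a loose end you should address: you need the fibrewise equivalences to be natural in $\sigma$ and compatible with the canonical map $E\rightarrow B\times_{L_fB}\widehat{E}$, since the lemma asserts that this specific comparison is a weak equivalence, not merely that $E$ and $B\times^h_{L_fB}L_fE$ are abstractly equivalent; the zig-zags you produce (fibrant replacements, applying $L_f$, identifying $L_f$ of an $f$-local object with itself) must be checked to assemble into a map of diagrams over the category of simplices. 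The paper's cube argument sidesteps this because it only ever manipulates the canonical fibre maps.
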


\begin{proof}
Assume that the square is a simplicial homotopy pullback. Then up to 
simplicial weak equivalence, the map $p:E\rightarrow B$ is the pullback
of $L_fE\rightarrow L_fB$, which is universally $f$-local by 
\prettyref{cor:cor6}. By \prettyref{cor:pbstab}, it is universally $f$-local.
Note that the map $p:E\rightarrow B$ then has automatically $f$-local fibres, 
by applying the homotopy pullback lemma to the following cube:
\begin{center}
  \begin{minipage}[c]{10cm}
    \xymatrix{
      p^{-1}(\sigma) \ar[rr] \ar[dd] \ar[rd] && E \ar[dd]^<<<<<<p \ar[rd] \\
      & L_f(p^{-1}(\sigma))
      \ar'[r][rr] \ar[dd] && L_fE \ar[dd]^{L_fp} \\ 
      \Delta^n\times U \ar'[r][rr]^\sigma \ar[rd] && B \ar[rd] \\
      & L_f(\Delta^n\times U)
      \ar[rr]_{L_f\sigma} && L_fB 
    }
  \end{minipage}
\end{center}

Now assume that $p:E\rightarrow B$ is universally $f$-local with $f$-local 
fibres. Consider the following diagram:
\begin{center}
  \begin{minipage}[c]{10cm}
    \xymatrix{
      & E \ar[d]^p \ar[r] & L_fE \ar[d]^{L_f(p)} \\
      \Delta^n\times U \ar[r]_\sigma & B\ar[r] & L_fB.
    }
  \end{minipage}
\end{center}
We can assume that $L_f(p):L_fE\rightarrow L_fB$ is a simplicial fibration. 
To check that the square is a simplicial homotopy pullback, it suffices to 
show that for each simplex $\sigma$ as above the induced morphism of fibres
$p^{-1}(\sigma)\rightarrow (L_fp)^{-1}(\sigma)$
is a simplicial weak equivalence. The morphism is an $f$-local weak 
equivalence because $p$ and  $L_f(p)$ are universally $f$-local. 
\begin{center}
  \begin{minipage}[c]{10cm}
    \xymatrix{
      p^{-1}(\sigma) \ar[rr] \ar[dd] \ar[rd] && 
      (L_f(p))^{-1}(\sigma) \ar[dd]^<<<<<<p \ar[rd] \\
      & L_f(p^{-1}(\sigma))
      \ar'[r][rr] \ar[dd] && L_f((L_f(p))^{-1}(\sigma)) \ar[dd]^{L_fp} \\ 
      \Delta^n\times U \ar'[r][rr]_= \ar[rd] && \Delta^n\times U \ar[rd] \\
      & L_f(\Delta^n\times U)
      \ar[rr]_{=} && L_f(\Delta^n\times U)
    }
  \end{minipage}
\end{center}
The front square is a simplicial homotopy pullback because its top morphism
$L_f(p^{-1}(\sigma))\rightarrow L_f((L_f(p))^{-1}(\sigma))$ is the 
localization of an $f$-local weak equivalence, hence a simplicial
weak equivalence. The side squares are both simplicial homotopy pullbacks
because both maps $p$ and $L_f(p)$ have $f$-local fibres. 
The back square is thus a simplicial homotopy pullback, so the  morphism in 
question is a simplicial weak equivalence. 
\end{proof}

The following plays the role of 
\cite[Lemma 3.2]{berrick:farjoun:2003:null}.

\begin{lemma}
\label{lem:old51}
Let $p:E\rightarrow B$ and $j:B\rightarrow C$ be morphisms of
simplicial sheaves with $f$-local $C$. Then $p:E\rightarrow B$
is universally $f$-local if and only if for each simplex
$\sigma:\Delta^n\times U\rightarrow C$ the induced map
$\sigma^\ast(p):(j\circ p)^{-1}(\sigma)\rightarrow j^{-1}(\sigma)$ is
universally $f$-local. 
\end{lemma}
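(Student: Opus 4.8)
The plan is to reduce this to the characterization of universally $f$-local maps via pullback behaviour along morphisms into the base, combined with the homotopy colimit decomposition of $B$ over its simplices (\prettyref{lem:decompfib}). First I would treat the ``only if'' direction. Suppose $p:E\rightarrow B$ is universally $f$-local. By \prettyref{cor:pbstab}, for any simplex $\sigma:\Delta^n\times U\rightarrow C$ the pullback $(j\circ p)^\ast(\sigma)$ along $\Delta^n\times U\rightarrow C$ is again universally $f$-local; but this pullback is precisely $\sigma^\ast(p):(j\circ p)^{-1}(\sigma)\rightarrow j^{-1}(\sigma)$, since pulling $j\circ p$ back along $\sigma$ factors through pulling $j$ back along $\sigma$. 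So this direction is essentially formal, using only that $j\circ p$ is universally $f$-local (which follows because $j$ is an $f$-local weak equivalence into an $f$-local object, hence $j$ is universally $f$-local by \prettyref{lem:lem4}, and then the composite $j\circ p$ is universally $f$-local by the pullback/pasting calculus encoded in \prettyref{lem:lem3} and \prettyref{lem:lem4}). Actually, for this direction one does not even need that: $\sigma^\ast(p)$ fits into a pasting of pullback squares together with $p$, and universally $f$-local maps are stable under pullback by \prettyref{cor:pbstab}.

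For the ``if'' direction, assume each $\sigma^\ast(p):(j\circ p)^{-1}(\sigma)\rightarrow j^{-1}(\sigma)$ is universally $f$-local, for all simplices $\sigma:\Delta^n\times U\rightarrow C$. I want to conclude $p:E\rightarrow B$ is universally $f$-local, which by \prettyref{lem:lem3} amounts to showing that for any $g:Z\rightarrow B$ the square with corners $Z\times_B E$, $E$, $Z$, $B$ is an $f$-local homotopy pullback. The key move is to pass to $C$: consider the composite $j\circ g:Z\rightarrow B\rightarrow C$ and the fibration replacement $q:\widetilde B\rightarrow C$ of $j$. Using \prettyref{lem:decompfib} applied to the morphism $(j\circ p):E\rightarrow C$ (after simplicial fibrant replacement, as in the proof of \prettyref{thm:localization}), one writes $j\circ p$ as a homotopy colimit over $(\mathbf{\Delta}\times T)\downarrow C$ of the diagram of fibres $(j\circ p)^{-1}(\sigma)$, and similarly $j$ as a homotopy colimit of the $j^{-1}(\sigma)$. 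The hypothesis says each square
\begin{center}
  \begin{minipage}[c]{10cm}
    \xymatrix{
      (j\circ p)^{-1}(\sigma) \ar[r] \ar[d] & E \ar[d]^{j\circ p}\\
      j^{-1}(\sigma) \ar[r] & C
    }
  \end{minipage}
\end{center}
is an $f$-local homotopy pullback (unravelling \prettyref{def:univfloc} for $\sigma^\ast(p)$ over the simplices of $\Delta^n\times U$, then using that the simplices of $\Delta^n\times U$ are cofinal/compatible — this is the routine bookkeeping step). By homotopy distributivity (property (HD i), in the form used in \prettyref{lem:lem3}), the homotopy colimit of the fibre diagram computes $E$ compatibly, so $j\circ p:E\rightarrow C$ is universally $f$-local. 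Finally, since $j:B\rightarrow C$ is an $f$-local weak equivalence with $C$ $f$-local, $j$ itself is universally $f$-local by \prettyref{lem:lem4}/\prettyref{cor:cor6}; and $j\circ p$ universally $f$-local together with $j$ universally $f$-local forces $p$ to be universally $f$-local, by applying \prettyref{lem:lem3} to a pasting of the square for $p$ with the square for $j$ and using the homotopy pullback lemma \cite[Lemma II.8.22]{goerss:jardine:1999:simplicial}.

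The main obstacle I anticipate is the bookkeeping in the ``if'' direction that turns the hypothesis — stated as universal $f$-locality of $\sigma^\ast(p)$ over simplices of $C$, i.e. over simplices $\tau:\Delta^m\times V\rightarrow \Delta^n\times U$ of each $\Delta^n\times U$ — into the single clean statement that the square above is an $f$-local homotopy pullback for every simplex $\sigma$ of $C$. Concretely, one must check that pulling $p$ back over $\Delta^n\times U$ and then localizing over the simplices of $\Delta^n\times U$ is the same information as localizing $j\circ p$ directly over $\sigma$; this uses that $\Delta^n\times U$ itself is a homotopy colimit of its simplices and again invokes \prettyref{lem:decompfib} together with homotopy distributivity, exactly as in the proof of \prettyref{cor:2jardine}(ii). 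Once that reduction is in place, the rest is the homotopy colimit argument of \prettyref{lem:lem3} verbatim. I would also remark, after the proof, that this lemma is the sheaf-theoretic analogue of \cite[Lemma 3.2]{berrick:farjoun:2003:null} and will be the inductive engine for the characterization theorem \prettyref{thm:bfthm}.
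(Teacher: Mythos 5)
Your ``only if'' direction is fine once you settle on the second formulation: $\sigma^\ast(p)$ is literally the pullback of $p$ along $j^{-1}(\sigma)\rightarrow B$, so \prettyref{cor:pbstab} applies (your first justification, identifying $\sigma^\ast(p)$ with the pullback of $j\circ p$ along $\sigma$, conflates two different maps). The ``if'' direction, however, has a fatal gap, located exactly where the content should be. Your closing step claims that ``$j\circ p$ universally $f$-local together with $j$ universally $f$-local forces $p$ to be universally $f$-local''. This cancellation is false: since $C$ is $f$-local, \emph{both} $j$ and $j\circ p$ are universally $f$-local unconditionally (this is \prettyref{cor:cor6}, after the usual fibration replacements), with no input from the hypothesis on $\sigma^\ast(p)$. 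If your cancellation were valid, every map $p:E\rightarrow B$ would be universally $f$-local (take $C=L_fB$), contradicting for instance the example $EG\rightarrow BG$ discussed at the end of the paper. The pasting lemma cannot be applied the way you want because, for a simplex $\tilde\sigma:\Delta^n\times U\rightarrow B$, the commutative square with corners $p^{-1}(\tilde\sigma)$, $E$, $\Delta^n\times U$, $C$ is not a pullback of $j\circ p$; the comparison $p^{-1}(\tilde\sigma)\rightarrow(j\circ p)^{-1}(j\circ\tilde\sigma)$ being an $f$-local equivalence is essentially what you are trying to prove (cf.\ \prettyref{thm:localization}). Relatedly, your intermediate assertion that the hypothesis ``says'' the square with corners $(j\circ p)^{-1}(\sigma)$, $E$, $j^{-1}(\sigma)$, $C$ is an $f$-local homotopy pullback is not correct: already for $p=\operatorname{id}_B$ the hypothesis holds trivially while that square need not be a homotopy pullback, so no amount of bookkeeping over simplices of $\Delta^n\times U$ produces it. In effect your argument uses the hypothesis only to re-derive that $j\circ p$ is universally $f$-local, which is automatic, and then discards it.

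The missing idea, which is how the paper argues, is to use the hypothesis \emph{fibrewise over $C$}: given a simplex $\tilde\sigma:\Delta^n\times U\rightarrow B$, set $\sigma=j\circ\tilde\sigma$ and factor $\tilde\sigma$ through the canonical map $i:\Delta^n\times U\rightarrow j^{-1}(\sigma)$ coming from the universal property of the pullback $j^{-1}(\sigma)$. Because $\sigma^\ast(p)$ is universally $f$-local, its pullback along the arbitrary map $i$ is an $f$-local homotopy pullback (\prettyref{lem:lem3}); this square has corners $p^{-1}(\tilde\sigma)$, $(j\circ p)^{-1}(\sigma)$, $\Delta^n\times U$, $j^{-1}(\sigma)$. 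Paste it with the square $(j\circ p)^{-1}(\sigma)\rightarrow E$ over $j^{-1}(\sigma)\rightarrow B$, which is an $f$-local homotopy pullback since the outer and lower squares of the three-storey diagram over $\Delta^n\times U\rightarrow C$ are (by \prettyref{cor:cor6} applied to $j$ and $j\circ p$). The composite square is precisely the pullback of $p$ along $\tilde\sigma$, so $p$ is universally $f$-local. Two smaller points: the lemma does not assume $j$ is an $f$-local weak equivalence, only that $C$ is $f$-local, so that assumption should not be invoked; and universal $f$-locality only has to be checked on simplices of $B$, so the reduction to arbitrary $g:Z\rightarrow B$ via \prettyref{lem:lem3} is unnecessary for this direction.
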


\begin{proof}
By \prettyref{cor:pbstab}, only the ``if''-direction needs proof here.
Consider the following diagram: 
\begin{center}
  \begin{minipage}[c]{10cm}
    \xymatrix{
      (j\circ p)^{-1}(\sigma) \ar[r] \ar[d] & E\ar[d]^p \\
      j^{-1}(\sigma)\ar[d] \ar[r] & B \ar[d]^j\\
      \Delta^n\times U\ar[r]_\sigma & C.
    }
  \end{minipage}
\end{center}
By \prettyref{cor:cor6}, both $j$ and $j\circ p$ are universally
$f$-local. In particular, the outer and the lower square are $f$-local
homotopy pullbacks, so the upper square is an $f$-local homotopy
pullback. Now assume $\tilde{\sigma}:\Delta^n\times U\rightarrow B$
is a simplex of $B$ such that $\sigma=j\circ\tilde{\sigma}$  and
consider the following diagram: 
\begin{center}
  \begin{minipage}[c]{10cm}
    \xymatrix{
      p^{-1}(\tilde{\sigma}) \ar[r] \ar[d]\ar@{}[dr]|{\txt{\bf{I}}}
      &(j\circ p)^{-1}(\sigma) \ar[r] \ar[d] \ar@{}[dr]|{\txt{\bf{II}}}
      & E\ar[d]^p \\
      \Delta^n\times U \ar[r]_i \ar[rd]_{\operatorname{id}}
      & j^{-1}(\sigma)\ar[d] \ar[r] & B \ar[d]^j\\
      &\Delta^n\times U\ar[r]_\sigma & C.      
    }
  \end{minipage}
\end{center}
By assumption, the morphism $\sigma^\ast(p):(j\circ
p)^{-1}(\sigma)\rightarrow j^{-1}(\sigma)$ is universally $f$-local,
so the square {\bf I} is an $f$-local homotopy pullback. By the above,
{\bf II} is an $f$-local homotopy pullback, so the composition {\bf
  I+II} is an $f$-local homotopy pullback. Therefore, $p$ is
universally $f$-local. 
\end{proof}

\begin{theorem}
\label{thm:bfthm}
Let $T$ be a site and let $f:X\rightarrow Y$ be a morphism of
simplicial sheaves in $\simplicial{\topos{T}}$. Assume that the
$f$-local model structure is proper. Let $p:E\rightarrow B$ be a
morphism of simplicial sheaves. 

We denote by $\overline{p}:\overline{E}\rightarrow B$ the fibrewise
$f$-localization of $p$, and by $j:B\rightarrow L_fB$ an $f$-local fibrant 
replacement of $B$.
The following are equivalent, where (iv) only makes sense if
$p:E\rightarrow B$ is locally trivial:
\begin{enumerate}[(i)]
\item The map $p:E\rightarrow B$ is universally $f$-local. 
\item The fibrewise localization $\overline{p}:\overline{E}\rightarrow
  B$ is universally $f$-local.
\item For each simplex $\sigma:\Delta^n\times U\rightarrow L_fB$, the
  following canonical diagram is a simplicial homotopy pullback:
\begin{center}
  \begin{minipage}[c]{10cm}
    \xymatrix{
      (\Delta^n\times U)\times_{L_fB}\overline{E} \ar[r] \ar[d]
      & \overline{p}^{-1}(\sigma) \ar[d] \\
      (\Delta^n\times U)\times_{L_fB}B \ar[r] &
      \Delta^n\times U.
    }
  \end{minipage}
\end{center}
\item For each simplex $\sigma:\Delta^n\times U\rightarrow L_fB$, the
  composition
$$(\Delta^n\times U)\times_{L_fB}B=j^{-1}(\sigma)\rightarrow
B\rightarrow \classify{F} \rightarrow \classify{L_fF}
$$
factors (in the simplicial homotopy category) through the projection
$(\Delta^n\times U)\times_{L_fB}B\rightarrow \Delta^n\times U$.
\end{enumerate}
\end{theorem}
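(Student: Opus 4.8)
The plan is to establish the equivalences (i)$\Leftrightarrow$(ii)$\Leftrightarrow$(iii) in general, and then (iii)$\Leftrightarrow$(iv) under the local triviality hypothesis. The equivalence of (i) and (ii) is nothing but \prettyref{lem:oldlem45}, which already records that $p$ is universally $f$-local precisely when its fibrewise localization $\overline p$ is. For everything that follows I would keep in hand two properties of $\overline p$: by \prettyref{lem:fibwise1} together with \prettyref{def:flocfib}, the map $\overline p$ has $f$-local fibres; and since the defining condition of having $f$-local fibres only involves fibres over simplices of the base, this property is inherited by every pullback $\sigma^\ast\overline p\colon (\Delta^n\times U)\times_{L_fB}\overline E\to j^{-1}(\sigma)$.

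For (ii)$\Leftrightarrow$(iii) I would argue as follows. By \prettyref{lem:old51} applied with $C=L_fB$, the map $\overline p$ is universally $f$-local if and only if each pulled-back map $\sigma^\ast\overline p$ over the ``non-local piece'' $j^{-1}(\sigma)=(\Delta^n\times U)\times_{L_fB}B$ of $B$ is universally $f$-local; by \prettyref{cor:pbstab} the forward implication here is automatic, so the substance is the converse. Since $\sigma^\ast\overline p$ has $f$-local fibres, \prettyref{lem:help} translates ``$\sigma^\ast\overline p$ is universally $f$-local'' into the statement that a square obtained by applying $L_f$ to $\sigma^\ast\overline p$ is a simplicial homotopy pullback. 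Using that $j^{-1}(\sigma)\to\Delta^n\times U$ is an $f$-local equivalence (a pullback of the $f$-local equivalence $j$), together with the homotopy colimit decomposition of \prettyref{lem:decompfib} and homotopy distributivity, I would rewrite that square as precisely the canonical square displayed in (iii), the object $\overline p^{-1}(\sigma)$ being read off as the fibre over $\sigma$ of a fibrant model of $L_f\overline p$ in the sense of \prettyref{def:fibloc1}. The ``assembly'' direction -- that the $\sigma$-indexed squares glue to the global square over $L_fB$ -- is a clean ``homotopy colimit of weak equivalences is a weak equivalence'' argument obtained by applying \prettyref{lem:decompfib} to the three maps to $L_fB$; the other direction is exactly where \prettyref{cor:pbstab} and \prettyref{lem:old51} are needed, so as to avoid having to detect the vanishing of a comparison map summand-by-summand inside a homotopy colimit.

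For (iii)$\Leftrightarrow$(iv) assume that $p$ is locally trivial with fibre $F$. By \prettyref{lem:fibcomp}(ii) the fibrewise localization $\overline p\colon\overline E\to B$ agrees, up to simplicial equivalence over $B$, with the one built from classifying spaces; hence $\overline E\to B$ is the locally trivial $L_fF$-fibre sequence classified by the composite $B\to\classify{F}\to\classify{L_fF}$. Restricting to a simplex $\sigma\colon\Delta^n\times U\to L_fB$, the map $(\Delta^n\times U)\times_{L_fB}\overline E\to j^{-1}(\sigma)$ is then the locally trivial $L_fF$-fibre sequence classified by the composite appearing in (iv), while $\overline p^{-1}(\sigma)\to\Delta^n\times U$ is a locally trivial $L_fF$-fibre sequence as well (locally on $\Delta^n\times U$ it is the $f$-localization of a trivial $F$-bundle). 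By the universal property of $\classify{L_fF}$ for locally trivial fibre sequences established in \cite{classify}, the square in (iii) is a simplicial homotopy pullback exactly when the classifying map $j^{-1}(\sigma)\to\classify{L_fF}$ is the composite of the projection $j^{-1}(\sigma)\to\Delta^n\times U$ with a classifying map of $\overline p^{-1}(\sigma)$ -- equivalently, when it factors through that projection in the simplicial homotopy category. That is the content of (iv); uniqueness of classifying maps up to homotopy is used in both directions.

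The step I expect to be the main obstacle is the matching inside (ii)$\Leftrightarrow$(iii): turning the ``abstract'' square supplied by \prettyref{lem:help}, whose corners involve $L_f(j^{-1}(\sigma))$ and $L_f$ of a total space, into the concrete square of (iii), whose bottom-right corner is the genuinely non-$f$-local and non-contractible object $\Delta^n\times U$. This is precisely the refinement over the naive Berrick--Dror-Farjoun statement warned about in the introduction: every homotopy fibre must be taken ``relative to the base simplex $\Delta^n\times U$'', and one has to be disciplined about which homotopy pullbacks are $f$-local and which are simplicial. The homotopy colimit decomposition of $L_fB$ and its homotopy distributivity are the tools that let this bookkeeping close.
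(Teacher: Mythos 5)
Your proposal is correct and follows essentially the same route as the paper: (i)$\Leftrightarrow$(ii) is \prettyref{lem:oldlem45}, (ii)$\Leftrightarrow$(iii) is exactly the combination of \prettyref{lem:old51} (with $C=L_fB$) and \prettyref{lem:help} applied to $\sigma^\ast\overline{p}$, and (iii)$\Leftrightarrow$(iv) uses \prettyref{lem:fibcomp} together with the classification of locally trivial maps, just as in the text. The only difference is at the step you flag as delicate: where you invoke \prettyref{lem:decompfib} and homotopy distributivity to match the square produced by \prettyref{lem:help} with the square in (iii), the paper instead pastes the square of (iii) against the simplicial homotopy pullback furnished by \prettyref{lem:fibwise1}, whose right-hand vertical map is the localization of $\sigma^\ast\overline{p}$, which accomplishes the same identification in one stroke.
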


\begin{proof}
We first prove the equivalence between (i) and (ii). Let
$p:E\rightarrow B$ be universally $f$-local. Consider the diagram
\begin{center}
  \begin{minipage}[c]{10cm}
    \xymatrix{
      p^{-1}(\sigma) \ar[rr] \ar[dd] \ar[rd] && E \ar[dd]^<<<<<<p \ar[rd] \\
      & \overline{p}^{-1}(\sigma)
      \ar'[r][rr] \ar[dd] && \overline{E} \ar[dd]^{\overline{p}} \\ 
      \Delta^n\times U \ar'[r][rr]^\sigma \ar[rd]_= && B \ar[rd]^= \\
      & \Delta^n\times U 
      \ar[rr]_\sigma && B 
    }
  \end{minipage}
\end{center}
By definition of fibrewise localization, the morphisms $E\rightarrow
\overline{E}$ and $p^{-1}(\sigma)\rightarrow
\overline{p}^{-1}(\sigma)$ are weak equivalences. Therefore, the front
square is an $f$-local homotopy pullback if and only if the back
square is an $f$-local homotopy pullback. This proves the equivalence
of (i) and (ii).

Assume that $p:E\rightarrow B$ is locally trivial with fibre $F$ in
the sense of \cite[Definition 3.5]{classify}, in particular $p$ is
classified by a morphism $B\rightarrow \classify{F}$. By
\prettyref{lem:fibcomp}, the fibrewise localization can be described
as the pullback of the universal $L_fF$-fibration along the
composition $B\rightarrow \classify{F}\rightarrow\classify{L_fF}$. The
fact that the canonical diagram in (iii) is a simplicial homotopy
pullback is then simply a reformulation  of the fact that the
composition in (iv) factors through the projection
$j^{-1}(\sigma)\rightarrow\Delta^n\times U$. Therefore, (iii) and (iv)
are also equivalent provided (iv) makes sense.

It remains to prove the equivalence of (ii) and (iii). We complete the
diagram in (iii) as follows
\begin{center}
  \begin{minipage}[c]{10cm}
    \xymatrix{
      (\Delta^n\times U)\times_{L_fB}\overline{E} \ar[r] \ar[d]
      & \overline{p}^{-1}(\sigma) \ar[d]
      \ar[r] & L_f(p^{-1}(\sigma))\ar[d] \\
      (\Delta^n\times U)\times_{L_fB}B \ar[r] &
      \Delta^n\times U\ar[r] & L_f(\Delta^n\times U)
    }
  \end{minipage}
\end{center}
The right square is a simplicial homotopy pullback by the definition
of fibrewise localization, cf. \prettyref{lem:fibwise1}. The right
vertical map is simplicially equivalent to the localization of the
left vertical map. Therefore, by 
\prettyref{lem:help}, the outer square is a simplicial homotopy
pullback if and only if the left vertical map is universally
$f$-local. The left square is a simplicial homotopy pullback if and
only if (ii) holds. For the equivalence of (ii) and (iii) it then
suffices to show that $\overline{p}:\overline{E}\rightarrow B$ is
universally $f$-local if and only if for each simplex
$\sigma:\Delta^n\times U\rightarrow L_fB$, the induced map $(j\circ
\overline{p})^{-1}(\sigma)\rightarrow j^{-1}(\sigma)$ is universally
$f$-local. This is the statement of \prettyref{lem:old51}.
\end{proof}

\begin{remark}
The above result can probably not be effectively used for showing that a
given map is universally $f$-local. However, it explains
philosophically why a map can fail to be universally $f$-local. In
spite of added complication of considering all the local fibres of
$p$, the reason is still the same as in
\cite{berrick:farjoun:2003:null}: a map fails to be universally
$f$-local if its fibrewise localization is non-trivial over non-local
parts. In the simplicial situation, one pulls back the fibrewise
localization to the fibre $A_{L_fB}$ of $j:B\rightarrow L_fB$. In the
sheaf situation, one has to replace the single space $A_{L_fB}$ by the
set of all the fibres of $B\rightarrow L_fB$ over the various
simplices. Note that the above result reduces exactly to 
\cite[Theorem 4.1]{berrick:farjoun:2003:null} for
$T=\operatorname{pt}$. 
\end{remark}

\section{Application: fibrations in $\Ao$-homotopy  theory}  
\label{sec:appl}

In this section, we apply the localization theory developed earlier to
discuss  fibrations in $\Ao$-homotopy theory. Hence we specialize to
the site $T=\smk$ of smooth finite type schemes over a field $k$
equipped with the Zariski or Nisnevich topology. We consider the
injective model structure on the category of simplicial sheaves
$\simplicial{\topos{\smk}}$, and apply a Bousfield 
localization to the scheme $\mathbb{A}^1$ considered as constant
representable simplicial sheaf. More details on the construction of
$\Ao$-homotopy theory can be found in \cite{morel:voevodsky:1999:a1}. 

Now recall from \cite{classify}, that for each simplicial sheaf $F$,
there is a classifying space of locally trivial maps with fibre $F$ in
the sense of \cite[Definition 3.5]{classify}. We denote this space by
$B\haut(F)$, since \cite[Theorem 5.10]{classify} shows that this space
can be constructed as the classifying space of the simplicial sheaf of
monoids $\haut(F)$ of homotopy self-equivalences of $F$. We assume
here that the morphisms considered are locally trivial in the
Nisnevich topology. Note that in the above, we are working in the
unpointed category, so we can not talk about fibre sequences in the
sense of \cite[Definition 6.2.6]{hovey:1998:modelcats}. Also the
classification result cited is a classification in the unpointed
setting. 

The main general result is the following.

\begin{theorem}
\label{thm:blocal}
Let $X$ be a cofibrant and $\Ao$-local fibrant simplicial sheaf on
$\smk$. Then  $B\haut(X)$ is $\Ao$-local if and only if the sheaf of 
homotopy self-equivalence groups $\pi_0(\haut(X))$ is strongly
$\Ao$-invariant. 
\end{theorem}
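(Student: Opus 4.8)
The plan is to analyze when the classifying space $B\haut(X)$ is $\Ao$-local by using the homotopy fibre sequence $X \to B(\ast,\haut(X),X) \to B\haut(X)$ together with the known relationship between $B\haut(X)$ and the simplicial sheaf of monoids $\haut(X)$. Since $X$ is assumed $\Ao$-local and fibrant, the total space of the universal fibration is built from $X$ and $\haut(X)$, so locality of $B\haut(X)$ should be governed by homotopical properties of $\haut(X)$, and ultimately by its sheaf of components $\pi_0(\haut(X))$. First I would recall that $B\haut(X)$ is connected with $\pi_1(B\haut(X)) \cong \pi_0(\haut(X))$ (as a sheaf of groups) and $\pi_n(B\haut(X)) \cong \pi_{n-1}(\haut(X))$ for $n \geq 2$; the higher homotopy sheaves of $\haut(X)$ are mapping-space homotopy sheaves $\pi_i(\haut(X)) = \pi_i(\map(X,X), \id)$, and since $X$ is $\Ao$-local these are already strongly $\Ao$-invariant by the standard characterization of $\Ao$-local spaces via their homotopy sheaves (Morel). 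So the only potential obstruction to $B\haut(X)$ being $\Ao$-local sits in $\pi_1$, i.e.\ in $\pi_0(\haut(X))$.

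The key step is then the criterion for $\Ao$-locality of a simplicial sheaf in terms of its homotopy sheaves: a space is $\Ao$-local if and only if $\pi_0$ is an $\Ao$-invariant sheaf of sets, $\pi_1$ is a strongly $\Ao$-invariant sheaf of groups, and the higher $\pi_n$ (as modules over $\pi_1$) are strongly $\Ao$-invariant. For $B\haut(X)$, which is connected, $\pi_0$ is trivial, the higher homotopy sheaves reduce to those of $\haut(X)$ which are strongly $\Ao$-invariant by the previous paragraph, and the $\pi_1$-action on them is induced from the conjugation action of $\haut(X)$ on itself, which is already compatible with the $\Ao$-local structure. Hence $B\haut(X)$ is $\Ao$-local precisely when $\pi_1(B\haut(X)) = \pi_0(\haut(X))$ is strongly $\Ao$-invariant, which is exactly the assertion.

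For the forward direction, if $B\haut(X)$ is $\Ao$-local then its fundamental group sheaf $\pi_0(\haut(X))$ is strongly $\Ao$-invariant by the same criterion applied in reverse — strong $\Ao$-invariance of $\pi_1$ is a necessary condition for any pointed connected $\Ao$-local space. For the converse, assuming $\pi_0(\haut(X))$ strongly $\Ao$-invariant, one assembles the three conditions: triviality of $\pi_0(B\haut(X))$, strong $\Ao$-invariance of $\pi_1$ by hypothesis, and strong $\Ao$-invariance of the higher homotopy sheaves together with the required action compatibility, which follow from $\Ao$-locality of $X$. One subtlety to handle carefully is whether "strongly $\Ao$-invariant" for the module sheaves $\pi_n$ over the non-abelian $\pi_1$ follows purely from $X$ being $\Ao$-local; this is where I expect the main obstacle, since one must verify that the $\pi_0(\haut(X))$-action on the higher homotopy sheaves of $\haut(X)$ interacts correctly with the Nisnevich-sheaf and $\Ao$-contractibility structure — this likely requires either an explicit model for $B\haut(X)$ via a bar construction and a Postnikov-tower / obstruction-theoretic argument, or a direct appeal to the fact that $\map(X,X)$ with $X$ fibrant $\Ao$-local is itself $\Ao$-local, so all its homotopy sheaves and all basepoint components share the good invariance properties automatically.
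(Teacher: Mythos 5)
Your overall strategy (reduce $\Ao$-locality of $B\haut(X)$ to conditions on its homotopy sheaves, observe that everything above degree one comes from $\haut(X)\simeq\Omega B\haut(X)$ and is controlled by $\Ao$-locality of $\inthom(X,X)$, so that only $\pi_1=\pi_0(\haut(X))$ remains) is reasonable, and your forward direction is fine: if $B\haut(X)$ is $\Ao$-local then its fundamental group sheaf is strongly $\Ao$-invariant by Morel's theorem. But the converse rests entirely on the asserted criterion ``a connected space is $\Ao$-local if and only if $\pi_1$ is strongly $\Ao$-invariant and the higher $\pi_n$ are strongly $\Ao$-invariant as $\pi_1$-modules,'' and this sufficiency direction, in the generality you need it (non-simply-connected base, non-abelian $\pi_1=\pi_0(\haut(X))$ acting nontrivially on the higher homotopy sheaves), is not an off-the-shelf theorem: it requires building an $\Ao$-local Postnikov tower with twisted Eilenberg--MacLane stages and proving those twisted stages are $\Ao$-local, which is exactly the point you flag at the end as ``the main obstacle'' and then leave open. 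Your fallback --- that $\inthom(X,X)$ is $\Ao$-local, hence all its homotopy sheaves are strongly/strictly $\Ao$-invariant --- only supplies the hypotheses of that criterion; it does not prove the criterion itself, so the gap remains.

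The proof in the paper avoids this issue by never invoking a general homotopy-sheaf characterization. It (a) shows directly that $\haut(X)$ is $\Ao$-local, as the union of those components of the $\Ao$-local space $\inthom(X,X)$ containing weak equivalences (with a small argument that the bijection on components induced by $U\times\Ao\rightarrow U$ restricts to these components); (b) notes that $\haut(X)\rightarrow\Omega B\haut(X)$ is a simplicial weak equivalence because the stalks are group-like simplicial monoids; and (c) quotes Morel's criterion specifically for classifying spaces, namely that $B\haut(X)$ is $\Ao$-local if and only if $\pi_0 L_{\Ao}\Omega B\haut(X)$ is strongly $\Ao$-invariant, which by (a) and (b) is just $\pi_0(\haut(X))$. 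So the delicate non-simply-connected locality statement is delegated to a precise, citable result of Morel tailored to $BG$-type spaces, rather than to the general twisted Postnikov argument your proposal would need to carry out. To repair your proof you should either substitute that criterion of Morel for your general one (and then your steps essentially become the paper's), or actually prove the sufficiency of your homotopy-sheaf criterion, including $\Ao$-locality of the twisted Eilenberg--MacLane objects over $\pi_0(\haut(X))$ --- a nontrivial piece of work that the proposal currently only gestures at.
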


\begin{proof}
(i) We first prove that the simplicial sheaf of monoids of
homotopy self-equivalences $\haut(X)$ is fibrant and $\Ao$-local. 

By \cite[Lemma I.1.8]{morel:voevodsky:1999:a1}, there is a fibration 
\begin{displaymath}
\inthom(X,Y)\rightarrow Y
\end{displaymath}
if $Y$ is fibrant. Thus $\inthom(X,X)$ is fibrant if $X$ is fibrant. 
The simplicial set $\haut(X)(U)$ is a union of connected components of
$\inthom(X,X)(U)$. By 2-out-of-3 for weak equivalences a morphism
$f:X\times U\rightarrow X\times U$ is a weak equivalence if it is
homotopic to a morphism $f':X\times U\rightarrow X\times U$ which is a
weak equivalence. Therefore $\haut(X)(U)$ consists exactly of the
union of the components of $\inthom(X,X)(U)$ which contain weak
equivalences. 

Consider now the  commutative diagram
\begin{center}
  \begin{minipage}[c]{10cm}
    \xymatrix{
      \haut(X)(U)\ar[r] \ar[d] &
      \haut(X)(U\times\Ao) \ar[d] \\
      \inthom(X,X)(U) \ar[r] &
      \inthom(X,X)(U\times\Ao).
    }
  \end{minipage}
\end{center}
The vertical arrows are the inclusions as described above, and the
lower horizontal morphism is a weak equivalence of simplicial sets
since we noted that $\inthom(X,X)$ is $\Ao$-local. In particular, the
lower morphism induces a bijection on the connected components. This
bijection restricts to a bijection between the components consisting
of weak equivalences: first, any morphism $f:X\times U\rightarrow
X\times U$ is a retract of $f\times\id:X\times U\times\Ao \rightarrow
X\times U\times \Ao$, therefore the preimage of a component in
$\haut(X)(U\times\Ao)$ is in $\haut(X)(U)$. Similarly, if $f$ is a
weak equivalence, then $f\times\id$ is a weak equivalence. 
But then the morphism $\haut(X)(U)\rightarrow \haut(X)(U\times\Ao)$ is
a weak equivalence because it is a bijection on connected components,
and the connected components are connected components of the mapping
spaces $\inthom(X,X)$, where we have a weak equivalence. This implies
that $\haut(X)$ is $\Ao$-local if $X$ is $\Ao$-local.

(ii)
By \cite[Lemma 5.44, Theorem 5.45]{morel:2006:a1algtop}, $B\haut(X)$ is
$\Ao$-local if and only if the sheaf of groups $\pi_0 L_{\Ao}\Omega
B\haut X$ is strongly $\Ao$-invariant. The theorem follows, if we can
prove that the obvious morphism 
\begin{displaymath}
\haut X\rightarrow \Omega B\haut X\rightarrow L_{\Ao}\Omega B\haut X 
\end{displaymath}
induces an isomorphism of sheaves of groups $\pi_0$.
But the obvious morphism 
\begin{displaymath}
\haut X\rightarrow \Omega B\haut X
\end{displaymath}
is already a weak equivalence of simplicial sheaves, because the
stalks of $\haut X$ are monoids of homotopy self-equivalences of 
simplicial sets which are group-like. Therefore, the morphism induces 
weak equivalences on the stalks, cf.  \cite[Corollary
IV.1.68]{rudyak:1998:cobordism}. 
This implies that $\Omega B\haut X$ is already $\Ao$-local, hence the
localization $\Omega B\haut X\rightarrow L_{\Ao}\Omega B\haut X$ is a
simplicial weak equivalence. 
\end{proof}

This result has the following consequence. Note that in the following,
we are talking about locally trivial maps, so the fibrewise
localization can be defined on classifying spaces. Note also that the
statement ``the map $p:E\rightarrow B$ is universally $\Ao$-local''
implies that for any choice of base point
$x:\operatorname{Spec}k\rightarrow B$, the resulting sequence
$p^{-1}(x)\rightarrow E\rightarrow B$ is a fibre sequence with
$p^{-1}(x)\simeq_{\Ao} F$. 

\begin{corollary}
\label{cor:loc}
Let $X$ be a cofibrant, fibrant and
$\Ao$-local simplicial sheaf on $\smk$ such that $\pi_0(\haut(X))$ is
strongly $\Ao$-invariant. 

Then we have the following statements:
\begin{enumerate}[(i)]
\item The morphism  
\begin{displaymath}
B(\ast,\haut X,X)\rightarrow B\haut X 
\end{displaymath} 
is universally $\Ao$-local. 
\item Any Nisnevich locally trivial morphism $E\rightarrow B$ whose
  fibre $F$ has the $\Ao$-homotopy type of $X$ is also universally
  $\Ao$-local.  
\item
Denoting by $\mathcal{H}^{\Ao}(Y,X)$ the pointed set of Nisnevich
locally trivial fibre sequences over $Y$ with fibre $X$ up to
$\Ao$-equivalence, we have a natural bijection
\begin{displaymath}
\mathcal{H}^{\Ao}(-,X)\cong [-,B\haut X]_{\Ao}
\end{displaymath}

\end{enumerate}
\end{corollary}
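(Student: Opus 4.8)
The plan is to derive all three statements from \prettyref{thm:blocal} together with the machinery of \prettyref{sec:fibseq} and \prettyref{sec:fibwise}, and from the classification result of \cite{classify}. First I would dispatch (i): by hypothesis $\pi_0(\haut(X))$ is strongly $\Ao$-invariant, so \prettyref{thm:blocal} tells us that $\classify{X}$ is $\Ao$-local (and we may assume it is fibrant). The map $B(\ast,\haut X,X)\rightarrow\classify{X}$ is a simplicial fibration onto an $\Ao$-local fibrant base, so \prettyref{cor:cor6} (applied with $f$ the $\Ao$-nullification map) immediately gives that it is universally $\Ao$-local.

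Next I would deduce (ii) from (i) using the pullback-stability of universally $f$-local maps. If $E\rightarrow B$ is Nisnevich locally trivial with fibre $F$ having the $\Ao$-homotopy type of $X$, then by the classification theorem \cite[Theorem 5.10]{classify} it is, up to weak equivalence, the pullback of the universal fibration $B(\ast,\haut X,X)\rightarrow\classify{X}$ along a classifying map $B\rightarrow\classify{X}$ — here one must be slightly careful that $F\simeq_{\Ao}X$ rather than $F\cong X$, so strictly the map is classified by $\classify{F}\rightarrow\classify{X}$ induced by an $\Ao$-equivalence, but since $\haut$ is built from homotopy self-equivalences this induces a simplicial weak equivalence $\classify{F}\simeq\classify{X}$ after $\Ao$-localization, and one invokes \prettyref{lem:oldlem45} to pass between $E$ and its fibrewise localization if needed. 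Granting the classification, \prettyref{cor:pbstab} then gives that $E\rightarrow B$ is universally $\Ao$-local. The promised $\Ao$-local fibre sequences for any choice of base point follow from Remark (ii) after \prettyref{def:univfloc}, since a universally $f$-local map yields a fibre sequence over every point.

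Finally, for (iii), the bijection $\mathcal{H}^{\Ao}(-,X)\cong[-,\classify{X}]_{\Ao}$ I would obtain by combining the (non-local) classification $\mathcal{H}(-,X)\cong[-,\classify{X}]$ of \cite{classify} with the fact, now available from parts (i)--(ii), that under the strong $\Ao$-invariance hypothesis Nisnevich locally trivial $X$-fibre sequences are automatically universally $\Ao$-local, hence honest $\Ao$-local fibre sequences, and that $\classify{X}$ is $\Ao$-local. Concretely: a map $Y\rightarrow\classify{X}$ in the $\Ao$-homotopy category lifts (since $\classify{X}$ is $\Ao$-local) to a genuine map $Y\rightarrow\classify{X}$ in the simplicial homotopy category, which classifies a locally trivial $X$-fibre sequence over $Y$; conversely such a fibre sequence is classified by a simplicial homotopy class of maps to $\classify{X}$, and two such agree $\Ao$-locally iff the underlying fibre sequences are $\Ao$-equivalent, by the universal $\Ao$-locality just established and naturality of the classifying construction. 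One must check that this assignment is well defined on $\Ao$-equivalence classes and natural in $Y$, which is routine from the functoriality of $B(\ast,\haut X,X)\rightarrow\classify{X}$ and \prettyref{cor:pbstab}.

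The main obstacle I expect is the comparison in (ii) and (iii) between the $\Ao$-homotopy type condition $F\simeq_{\Ao}X$ and the strict classification in \cite{classify}, which classifies fibre sequences whose fibre is \emph{simplicially} equivalent to a fixed model: one needs that $\haut(-)$ (and hence $\classify{-}$) is suitably invariant, so that replacing $F$ by an $\Ao$-equivalent $X$ does not change the $\Ao$-local classifying space, and that the fibrewise localization (which is available by local triviality) turns a given locally trivial $F$-fibre sequence into one with fibre $L_{\Ao}F\simeq X$ over which the classification applies. Making this passage rigorous — essentially showing $\classify{F}\to\classify{L_{\Ao}F}$ is an $\Ao$-equivalence when $\pi_0\haut$ is strongly $\Ao$-invariant — is where the real work lies; everything else is an application of \prettyref{cor:cor6}, \prettyref{cor:pbstab}, and \prettyref{lem:oldlem45}.
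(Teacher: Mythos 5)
Your overall strategy coincides with the paper's: (i) is exactly the paper's argument (\prettyref{thm:blocal} plus \prettyref{cor:cor6}), and (ii), (iii) are meant to follow from the classification result of \cite{classify}, pullback stability (\prettyref{cor:pbstab}), and the $\Ao$-locality of $\classify{X}$. The genuine problem is your treatment of the mismatch between $F\simeq_{\Ao}X$ and $F\cong X$ in (ii). You assert that the comparison map of classifying spaces becomes a simplicial weak equivalence ``after $\Ao$-localization'' because $\haut$ is built from homotopy self-equivalences, and you then identify as ``the real work'' the claim that $\classify{F}\rightarrow\classify{L_{\Ao}F}$ is an $\Ao$-equivalence when $\pi_0\haut$ is strongly $\Ao$-invariant. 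That claim is not established anywhere in the paper, your one-line justification does not give it (functoriality produces a map, not an equivalence --- self-equivalences of $F$ need not surject onto, nor inject into, those of $L_{\Ao}F$ even $\Ao$-locally), and you leave it unresolved; as stated, your (ii), and the part of (iii) that depends on it, is not proved.

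The point is that no such equivalence is needed: only the existence of the map matters. The locally trivial $p:E\rightarrow B$ is classified by a map $B\rightarrow\classify{F}$; by \prettyref{lem:fibcomp}(ii) its fibrewise localization $\overline{p}:\overline{E}\rightarrow B$ is the pullback of the universal fibration over $\classify{L_{\Ao}F}$ along the composite $B\rightarrow\classify{F}\rightarrow\classify{L_{\Ao}F}$. Since $F\simeq_{\Ao}X$ and $X$ is $\Ao$-local, $L_{\Ao}F$ is simplicially equivalent to $X$, so this universal fibration lives over $\classify{X}$ up to equivalence and is universally $\Ao$-local by (i); hence $\overline{p}$ is universally $\Ao$-local by \prettyref{cor:pbstab}, and $p$ is by \prettyref{lem:oldlem45}. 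Replacing your ``$\classify{F}\rightarrow\classify{L_{\Ao}F}$ is an $\Ao$-equivalence'' step by this pullback argument closes the gap; the remaining ingredients of your (iii) (the bijection $[-,\classify{X}]\cong[-,\classify{X}]_{\Ao}$ and the comparison of equivalence relations, which the paper handles by factoring classifying maps through $L_{\Ao}B$ so the classified fibre sequences consist of $\Ao$-local spaces, where $\Ao$- and simplicial equivalence coincide) agree with the paper's proof.
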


\begin{proof}
(i) \prettyref{cor:cor6} implies
  that the universal fibre sequence is $\Ao$-local if (a simplicial
  fibrant replacement of) the classifying space $B\haut X$ is
$\Ao$-local. But $B\haut X$ is $\Ao$-local since the conditions of
\prettyref{thm:blocal} are satisfied.

(ii) follows from \prettyref{cor:pbstab}. Any Nisnevich locally
trivial fibre sequence is a pullback of the universal fibre sequence
with fibre $F$ along some morphism $B\rightarrow B\haut F$. But from
(i) it follows that the universal fibre sequence over $B\haut
L_{\Ao}F\simeq B\haut X$ is $\Ao$-local. 

For (iii) we first note that \cite[Theorem 5.10]{classify} yields a
bijection  
\begin{displaymath}
\mathcal{H}(-,X)\cong [-,B\haut X].
\end{displaymath}
For the definition of $\mathcal{H}$, cf. \cite[Definition
5.1]{classify}. Since $B\haut X$ is $\Ao$-local, we also have a bijection 
\begin{displaymath}
[-,B\haut X]\cong [-,B\haut X]_{\Ao}.
\end{displaymath}
On the other hand, since $B\haut X$ is $\Ao$-local, the classifying
morphism $B\rightarrow B\haut X$ factors up to homotopy through a
morphism $L_{\Ao}B\rightarrow B\haut X$. By
\prettyref{cor:cor6}, we can hence assume that the fibre
sequence classified by this consists of $\Ao$-local
spaces. Since a morphism between local spaces is an $\Ao$-weak
equivalence if and only if it is a simplicial weak equivalence, the
two equivalence notions for fibre sequences coincide, and we have the
final bijection $\mathcal{H}^{\Ao}(-,X)\cong \mathcal{H}(-,X)$.
\end{proof}

\begin{remark}
Weaker versions of the above have been used in
  \cite{morel:2006:a1algtop} and \cite{torsors} to produce fibre
  sequences from torsors under algebraic groups. 
The above statement can be used to produce classifying spaces for many
other ``fibre sequences'' in $\Ao$-homotopy theory where the structure
groups are no longer algebraic groups. One particularly 
interesting such classifying spaces would be the classifying space of
spherical fibrations: let $S^{2n,n}=S^n\wedge\mathbb{G}_m^{\wedge n}$
be an $\Ao$-local model of the $(2n,n)$-sphere. Then the Nisnevich
locally trivial morphisms of simplicial sheaves with fibre $S^{2n,n}$
are classified by $B\haut S^{2n,n}$. This remains true in the
$\Ao$-local situation if the sheaf of homotopy self-equivalences
$\pi_0\haut S^{2n,n}$ of the $(2n,n)$-sphere is strongly
$\Ao$-invariant. By the computations in \cite[Corollary 5.42, Theorem 
6.36]{morel:2006:a1algtop}, the homotopy endomorphisms of $S^{2n,n}$
are given by the Grothendieck-Witt ring $GW(k)$ for $n\geq 2$ and an
extension of $GW(k)$ for $n=1$. The homotopy self-equivalences are
then the units in the above rings. 

If the sheaf of units of the Grothendieck-Witt sheaf $GW$ are strongly
$\Ao$-invariant, then $B\haut S^{2n,n}$ is an $\Ao$-local classifying
space for spherical fibrations. Unconditionally, its universal
$\Ao$-covering -- the classifying space of the connected component of
$\haut S^{2n,n}$ -- is an $\Ao$-local classifying space for orientable
spherical fibrations. 

There are several interesting directions to pursue here:
\begin{enumerate}[(i)]
\item How does the notion of orientability coming from spherical
  fibrations relate to other notions of orientability in
  $\Ao$-homotopy theory? 
\item I would expect that the classifying space for orientable
  spherical fibrations is cellular with a cell structure similar to
  the one known in ``classical algebraic topology''. This would imply
  that the characteristic classes of orientable spherical fibrations
  over an algebraically closed field coincide with the known
  topological characteristic classes. 
\item There is an obvious morphism $BGL_n\rightarrow B\haut S^{2n,n}$
  obtained from a change-of-fibre along $\mathbb{A}^n\rightarrow
  \mathbb{A}^n/(\mathbb{A}^n\setminus\{0\})\simeq S^{2n,n}$ -- the
  classifying space version of the J-homomorphism. This could possibly
  be used in connection with the characteristic classes in (ii) to
  exhibit simplicial sheaves with a reasonably behaved spherical
  fibration which are not $\Ao$-weakly equivalent to any smooth
  projective scheme. 
\end{enumerate}
\end{remark}

Finally, I would like to remark that an $f$-local version of homotopy 
distributivity does not hold: a homotopy colimit of universally
$f$-local maps is not necessarily universally $f$-local. In
particular, it is not necessarily true that a map which is locally
trivial is universally $f$-local. As an example, let $G$ be a sheaf of
groups on $\smk$ which is $\Ao$-invariant but not strongly
$\Ao$-invariant. Then the map $EG\rightarrow BG$ is not universally
$f$-local - its simplicial homotopy fibre is $G$ (which is $\Ao$-local
by assumption), but its $\Ao$-homotopy fibre is
$\pi_1(L_{\Ao}BG)$. If $\pi_1BG\cong\pi_1(L_{\Ao}BG)$, then $G$ would
be strongly $\Ao$-invariant, contradicting the assumption. In
particular, it seems that the condition on strong $\Ao$-invariance of
self-equivalences in \prettyref{cor:loc}  can not be dropped.

\end{document}